\numberwithin{equation}{section}
\newtheorem{theorem}{Theorem}
\newtheorem{lemma}{Lemma}
\newtheorem{proposition}{Proposition}
\newtheorem{remark}{Remark}
\newtheorem{proof}{Proof}
\title
 {On the Initial Data Constraints on the Light cone for the Einstein-Vlasov system}
\author{Patenou Jean Baptiste}
\date{}
\begin{document}
\maketitle  \abstract{ This article is concerned with the derivation
of the Gauss-Codazzi's constraints
  equations on the initial light cone for geometric
transport equations in general relativity. Temporal-gauge-dependent
constraints are addressed too and gauge-preservation is established.
The global resolution of the constraints is studied, a large class
of initial data sets is deduced from appropriate free data and their
behavior at the vertex of the cone is examined.}\\
\textbf{Keywords:} Characteristic Cauchy problem,
Geometric-Transport equations, General relativity, Kinetic theory,
initial data
constraints\\
 \textbf{2010 Mathematics Subject Classification Numbers:}
Primary 35Q75; Secondary 83C05, 35Q83


\section{Introduction and various issues}
It is well known that by considering the characteristic Cauchy
problem in general relativity, the treatment of the initial data
constraints problem is considerably simplified as the constraints
reduce to propagations equations along null geodesics generating the
initial manifold carrying the data provided free data are
well-chosen
\cite{a},\cite{l},\cite{f},\cite{v},\cite{s},\cite{n},\cite{u},\cite{i},\cite{t},\cite{j},\cite{k}.
This is a particular interesting feature of the characteristic
Cauchy problem in general relativity opposed to the ordinary
spacelike Cauchy problem where the constraints equations are of
elliptic type. However, a full description of the constraints in the
characteristic Cauchy problem setting requires a priori
investigation of the geometry of the null initial hypersurfaces in
consideration and derivation on it of explicit expressions of
various null geometry quantities
\cite{l},\cite{f},\cite{v},\cite{s},\cite{n},\cite{u},\cite{i},\cite{t},\cite{j},\cite{k}.
Furthermore, while in the case of the ordinary spacelike Cauchy
problem in general relativity the constraints are standard, depend
only of the geometric nature of the Einstein's equations (i.e. the
so called Hamiltonian and momentum constraints) \cite{m},\cite{b},
in the case of the characteristic initial value problem, one faces
the difficult task of highlighting additional gauge-dependent
constraints essential to the construction of the full set of initial
data
\cite{l},\cite{f},\cite{v},\cite{s},\cite{n},\cite{u},\cite{i},\cite{t},\cite{j},\cite{k}.
These latter are induced by: the
 evolution system deduced of the splitting of the Einstein equations
   by the choice of a gauge, the form of the stress-energy
    momentum tensor of the matter involved, and their hierarchy
 depends heavily on the prescribed free data.
  On the other hand recent developments in the direction
of the study of smoothness of Scri \cite{v} besides the challenge
for the global existence theory in general relativity
\cite{p},\cite{a},\cite{z},\cite{e},\cite{r} indicate the importance
to enlarge the approach of the treatments of the characteristic
initial data constraints problem in general relativity. As instance,
in \cite{v},\cite{n}, it is revealed that harmonic gauge is not
appropriate to tackle some difficulties related to the occurrence of
log-terms in the constraints at infinity. In this paper, the new
approach consists to investigating the temporal gauge in the
characteristic Cauchy problem setting. This requires that the shift
is null and the time is in wave gauge. The characteristic initial
value problem for the Einstein-Vlasov system on the light cone
splits in the characteristic Cauchy problem on the light cone for
the evolution system (\ref{x})-(\ref{xx}) and the initial data
constraints problem on the light cone for the Einstein-Vlasov
system. This gauge helps in particular in the treatment of the
initial data constraints's problem on a light cone when kinetic
matter is involved. Indeed, the presence of all the components of
the metric in each component of the momentum tensor of matter due to
the Vlasov's field makes difficult the use of the Rendall's scheme
of resolution of the initial data constraints's problem. Such
difficulties are revealed in \cite{c},\cite{n},\cite{k}. The
interest for the characteristic Cauchy problem in general relativity
is well known \cite{j}, and there is a growing interest for this
since the work of D. Christodoulou on the formation of Black holes
in general relativity \cite{w}. The gauge mostly used in this
context is the harmonic or wave gauge (its generalization is the
"generalized wave map gauge")
\cite{c},\cite{l},\cite{f},\cite{v},\cite{z},\cite{n},\cite{j},\cite{k},
which fits well to some types of matter. Another gauge ie. the
"Double null foliation gauge" is now also used and principally in
vacuum \cite{a},\cite{h},\cite{y}. For all these gauges, the
question of existence of global solutions for the constraints's
equations is of great interest \cite{f},\cite{v},\cite{h}.
  The rest of the paper is structured as follow: first we recall the
  evolution system (\ref{x})-(\ref{xx}) induced by the choice of the temporal
  gauge, thereby we identify the Cauchy data to be attached to this
  system and the type of constraints on concerned, then we analyze
  the constraints in coordinates adapted to the null geometry of the
  cone, this yields a full description of the constraints followed
  by their resolution from appropriate free data. The behavior of
  the solutions of the constraints at the tip of the cone is
  analyzed. The preservation of the gauge is established.
   The question left open is the study of
the class of free data on a cone which leads to a smooth solution of
the Einstein-Vlasov system on a neighborhood of the vertex of the
cone.

\bigskip

\section{The setting and the evolution system}
The Einstein equations of general relativity are geometric in nature
and do not
 take a specific partial differential equations type, unless a well-chosen of
 gauge is introduced. They describe the gravitational potential $g$.
 The Vlasov equation in turn appears in
 kinetic theory, it governs the density $\rho$ of moving particles. We are thus
 interested here in the characteristic Cauchy problem in a domain $Y_O$ above the light cone of vertex O
 for the combination of these
 equations, this models a spacetime $(Y_O,g)$ with collisionless matter, with $Y_O$ a Lorentzian manifold.
In a global set of coordinates
$(x^\alpha)=(x^0,x^1,x^a),\;(\alpha=0,1,...,n;\;a=2,...,n)$ of
$\mathbb{R}^{n+1}=\mathbb{R}^2\times \mathbb{R}^{n-1}, (n\geq 3)$,
these equations read:
\begin{eqnarray}\label{x7}
  H_g:\; G_{\mu\nu}\equiv R_{\mu\nu}-\frac{R}{2}g_{\mu\nu}&=&
  T_{\mu\nu},\\\label{y2}
  H_\rho:\;p^\alpha\frac{\partial\rho}{\partial x^\alpha}-
    \Gamma_{\mu\nu}^i p^\mu p^\nu\frac{\partial\rho}{\partial p^i}
    &=&0.
  \end{eqnarray}One considers that the
particles are of rest mass $\textbf{m}$, and move towards the future
$(p^0>0)$ in their mass shell
\begin{equation}
\mathbb{P}:=\{(x^\delta,p^\mu)\in Y_O\times
\mathbb{R}^{n+1}/\;g_{\mu\nu}p^\mu p^\nu=-\textbf{m}^2,\;p^0>0\}.
\end{equation}
 The terms $R_{\mu\nu},\;R$ and $G_{\mu\nu}$ design
respectively the components of the Ricci tensor, the scalar
curvature, the components of the Einstein tensor $G$, relative to
the searched metric $g$, while the $T_{\mu\nu}$ are the components
of the stress-energy momentum tensor of matter. The
$\Gamma^\lambda_{\mu\nu}$ are the Christoffel symbols of $g$, the
$p^\lambda$ stand as the components  of the momentum of the
particles w.r.t. the basis $(\frac{\partial}{\partial x^\alpha})$ of
the fiber $\mathbb{P}_x:=\{(p^\alpha)\in
\mathbb{R}^{n+1}/g_{\mu\nu}(x)p^\mu p^\nu=-\textbf{m}^2,\;p^0>0\}$
of $\mathbb{P}$, and
\begin{equation}\label{e4}
       T_{\alpha\beta}=-
    \int_{\{g(p,p)=-\textbf{m}^2\}}\frac{\rho(x^\nu, p^\mu)p_\alpha
  p_\beta\sqrt{|g|}}{p^0}\;dp^1...dp^n.
    \end{equation}As already mentioned, we investigate the temporal
    gauge \cite{m},\cite{b} requiring that
    \begin{equation}\label{y1}
g_{0i}=0,\;\Gamma^0\equiv
g^{\lambda\delta}\Gamma_{\lambda\delta}^0=0,i=1,...,n;
\lambda,\delta=0,...,n.
    \end{equation}
     Following Choquet Bruhat, the evolution system
$(H_{\overline{g}},H_\rho) $ attached to the Einstein-Vlasov system
$(H_{g},H_\rho) $ and induced by this gauge
 \cite{m},\cite{b} is
\begin{eqnarray}\label{x}
H_{\overline{g}}:\;\partial_0
R_{ij}-\overline{\nabla}_iR_{j0}-\overline{\nabla}_jR_{i0}&=&\partial_0
\Lambda_{ij}-\overline{\nabla}_i\Lambda_{j0}-\overline{\nabla}_j\Lambda_{i0};
\\\label{xx}
  H_\rho:\;p^\alpha\frac{\partial\rho}{\partial x^\alpha}-
    \Gamma_{\mu\nu}^i p^\mu p^\nu\frac{\partial\rho}{\partial p^i}
    &=&0;
\end{eqnarray}with $\Lambda_{\mu\nu}=T_{\mu\nu}+\frac{g^{\lambda\delta}T_{\lambda\delta}}{1-n}g_{\mu\nu}$,
and where the system $H_{\overline{g}}$ replaces the Einstein
equations
 and its principal part is $ \square \partial_0
  \overline{g}_{ij}$, $
\overline{\nabla}$ denotes the connection w.r.t. the induced metric
$\overline{g}$ on $ \Lambda_t: x^0=t$. In this paper, attention is
focused on the construction and resolution
    of the constraints satisfied by a large class of initial data
$(\overline{g}_0,k_0,\rho_0)$ on $\mathcal{C}\times \mathbb{R}^n$
with $\mathcal{C}$ of equation
\begin{equation}\label{x2}
\mathcal{C}:\;x^0-r=0,\;r:=\sqrt{\sum_{i=1}^n(x^i)^2},
\end{equation}
s.t. for any solution $(\overline{g},\rho)$ of the evolution system
$(H_{\overline{g}},H_\rho)$ on $\mathbb{P}$ satisfying
$\overline{g}_{|\mathcal{C}}=\overline{g}_0,\;(\partial_0\overline{g})_{|\mathcal{C}}=k_0,\;
\rho_{|\mathcal{C}}=\rho_0$, $(g,\rho)$ is solution of the
Einstein-Vlasov system $ (H_{g},H_\rho)$ in $\mathbb{P}$, where $g$
is of the form
\begin{equation}\label{e1}
    g=-\tau^2(dx^0)^2+\overline{g}_{ij}dx^idx^j,
\end{equation} with $\tau^2=(c(x^i))^2|\overline{g}|$, and
$c$ is a positive scalar density on $ \Lambda_t$ which is determined
by the prescribed data such that $\Gamma^0=0$ in $Y_O$.
\section{The characteristic initial data constraints}
In what follows, one sets $q_s=-\frac{x^s}{r},
q^s=g^{sl}q_l,\;X^{\mu\nu}\equiv G^{\mu\nu}-T^{\mu\nu}$. The start
point of the construction of the constraints is the following
proposition.
 \begin{proposition}\label{p1}
 For any $\mathcal{C}^\infty$ solution
$(\overline{g},\rho)$ in $ \mathbb{P}$ of the evolution system
(\ref{x})-(\ref{xx}) such that with respect to the metric $g$ of the
form (\ref{e1})
$X^{\mu\nu}_{/\mathcal{C}}=0,\;(\partial_0X_{0s})(O)=0; s=1,...,n,
s\neq {s_0},q^{s_0}(O)\neq 0 $, $(g,\rho)$
 is a solution in $ \mathbb{P}$ of the Einstein-Vlasov
 system.\end{proposition}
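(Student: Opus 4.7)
The plan is to apply the classical propagation-of-constraints argument adapted to the characteristic setting. Set $Y_{\mu\nu}=R_{\mu\nu}-\Lambda_{\mu\nu}$; via trace reversal $Y_{\mu\nu}=0$ is equivalent to $G_{\mu\nu}=T_{\mu\nu}$, i.e.\ to $X^{\mu\nu}=0$. It therefore suffices to show $X^{\mu\nu}\equiv 0$ in $Y_O$.

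First I would establish the conservation law $\nabla_\mu X^{\mu\nu}=0$. The twice contracted Bianchi identity gives $\nabla_\mu G^{\mu\nu}=0$ unconditionally. The standard integration by parts of $H_\rho$ over the mass shell $\mathbb{P}_x$, using $g_{\mu\nu}p^\mu p^\nu=-\mathbf{m}^2$ and the expression (\ref{e4}) of $T_{\alpha\beta}$, yields $\nabla_\mu T^{\mu\nu}=0$. Subtracting, I obtain $\nabla_\mu X^{\mu\nu}=0$ in $Y_O$, which in the coordinates at hand provides evolution equations for the components $X^{0\mu}$ in terms of spatial derivatives of $X$.

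Next, rewriting $H_{\overline{g}}$ as
$$\partial_0 Y_{ij}=\overline{\nabla}_iY_{0j}+\overline{\nabla}_jY_{0i}$$
and passing back to contravariant form by trace reversal, the resulting propagation equations for $X^{ij}$ combine with the conservation law above to yield a closed linear homogeneous first-order system for the components of $X^{\mu\nu}$. This is the standard mechanism by which constraint violations satisfy a hyperbolic system whenever the reduced evolution is solved.

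The conclusion then reduces to a uniqueness statement for this linear system with vanishing data on $\mathcal{C}$. The main obstacle lies at the vertex $O$: the cone degenerates to a point there, so the restriction $X^{\mu\nu}_{|\mathcal{C}}=0$ does not by itself determine all first-order derivatives of $X$ at $O$. The supplementary hypothesis $(\partial_0 X_{0s})(O)=0$ for $s\neq s_0$ is precisely what is needed; the non-degeneracy $q^{s_0}(O)\neq 0$ identifies a genuine transverse spatial direction in which the remaining derivative $\partial_0 X_{0s_0}(O)$ can be recovered from the conservation law evaluated at $O$. Together with $X_{|\mathcal{C}}=0$, these pointwise vertex conditions furnish enough data to invoke a standard uniqueness theorem for linear characteristic Cauchy problems on the cone; the vanishing of $X$ then propagates throughout $Y_O$, giving $G_{\mu\nu}=T_{\mu\nu}$, so that $(g,\rho)$ solves $H_g$ alongside $H_\rho$.
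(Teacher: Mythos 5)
Your overall strategy is the same as the paper's: use the contracted Bianchi identity plus the Vlasov-induced conservation of $T$ to get $\nabla_\mu X^{\mu\nu}=0$, combine with the temporal-gauge evolution system $H_{\overline{g}}$ to obtain a homogeneous linear system for $X$, and propagate the vanishing of $X$ off the cone, with the vertex hypotheses seeding the argument at $O$. Your reading of the role of $(\partial_0X_{0s})(O)=0$, $q^{s_0}(O)\neq 0$ (recovering the missing derivative at $O$ from the Bianchi relations and $H_{\overline{g}}$ evaluated at the vertex) also matches the paper.

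There is, however, a genuine gap at the decisive step. You assert that the conservation law and the evolution equations combine into a \emph{closed linear homogeneous first-order system} to which a ``standard uniqueness theorem for linear characteristic Cauchy problems'' applies. That first-order system is exactly the one for which $\mathcal{C}$ is characteristic, and it is not symmetric hyperbolic in any evident sense, so no off-the-shelf uniqueness theorem for data on $\mathcal{C}$ is available for it; a generic first-order linear system with vanishing data on one of its own characteristic hypersurfaces need not vanish in the interior. The paper (following Choquet-Bruhat) avoids this by building a Leray hyperbolic system of \emph{mixed orders}: the equations for $X^{00}$ and $X^{ij}$ are first order in $\partial_0$, but the equations for $X^{0j}$ are genuine wave equations $\square_g X^{0j}+L^{0j}(X,\partial X)=0$, obtained by differentiating the conservation law in time and substituting the evolution equations (and a further third-order system $\square_g\partial_0X^{\lambda\delta}+\cdots=0$ is used at the end). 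This structure is what makes the argument work: since $\mathcal{C}$ is characteristic for $\square_g$ and $X_{|\mathcal{C}}=0$, the restriction of the wave equations to $\mathcal{C}$ degenerates into \emph{transport equations along the null generators} for the transversal derivatives $[\partial_0X^{0j}]$ (after the first two groups of equations are used to express $[\partial_0X^{00}]$ and $[\partial_0X^{ij}]$ in terms of the $[\partial_0X^{0i}]$). Integrating these transport equations from the vertex, with $(\partial_0X^{\mu\nu})(O)=0$ supplied by your vertex conditions, gives $[\partial_0X^{\mu\nu}]=0$ on all of $\mathcal{C}$; only then does one have a full set of vanishing Cauchy data for the Leray system and can invoke its uniqueness theorem to conclude $X\equiv 0$ in $Y_O$. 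This intermediate propagation of $[\partial_0X^{0j}]$ along the generators — which is where the vertex hypotheses actually do their work — is absent from your proposal, and without the second-order (wave) structure you cannot produce it.
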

\begin{proof}
  If $(\overline{g},\rho)$ is a $\mathcal{C}^\infty$
  solution in $ \mathbb{P}$ of the evolution system
(\ref{x})-(\ref{xx}), then with respect to the metric $g$ of the
form (\ref{e1}) tied to $\overline{g}$ and in virtue of Bianchi
identities, the tensor $(X^{\mu\nu})$ satisfies the homogeneous
linear Leray-hyperbolic system ( "see"
 \cite{b}, pages [407-414]):
           \begin{eqnarray}\label{e12}
             \partial_0 X^{00}+L^{00}(X^{\gamma\alpha},\partial_i
X^{i0})&=&0 \\\label{e13}
            \partial_0 X^{ij}+L^{ij}(X^{\gamma\alpha},
\partial_sX^{k0})&=&0  \\\label{e14}
   \square_g X^{0j}
+L^{0j}(X^{\gamma\alpha},\partial_sX^{\delta\beta})&=&0.
           \end{eqnarray}Another homogeneous third order Leray hyperbolic system derived from this one is the
           system
           \begin{equation}\label{y3}
                \square_g\partial_0X^{\lambda\delta}+T^{\lambda\delta}(X^{\alpha\beta},D^\epsilon
                X^{\mu\nu})=0,\;|\epsilon|\leq 2.
           \end{equation}Now if one has
           $X^{\mu\nu}_{/\mathcal{C}}=0$, then on $\mathcal{C}$, $[\partial_0 X^{00}],\;[\partial_0
           X^{ij}]$ express in terms of
           $[\partial_0X^{0i}],\;i=1,...,n$ as a consequence of
           restriction to $\mathcal{C}$ of the equations
           (\ref{e12})-(\ref{e13}).
Substituting these expressions in the system (\ref{e14}) restricted
to $\mathcal{C}$, this latter appears then in turn as a homogeneous
linear differential system of propagation equations on $\mathcal{C}$
of unknowns $[\partial_0X^{0i}],\;i=1,...,n$. Now, thanks once more
to
 the Bianchi relations $\nabla_\alpha X^{\alpha i}=0,\;i=1,...,n$
  and the evolution system $H_{\overline{g}}$ written at O,
 it suffices that $\partial_0X_{0k}(O)=0,\;k=1,...,n,\;k\neq k_0,\; q^{k_0}\neq 0$ so that
 $(\partial_0 X^{\mu\nu})(O)=0$. Finally $X^{\mu\nu}=0$ in $Y_O$
thanks to (\ref{y3}).\end{proof}
\subsection{Null geometry on the cone}
The above proposition \ref{p1} is an indication that the Einstein
equations must hold on the initial hypersurface $\mathcal{C}$ in
order that the preservation of the gauge is established. This
induces Gauss and Codazzi's constraints on $\mathcal{C}$, but these
standard constraints must be supplemented by other gauge-dependent
constraints. To carry out the
    analysis, we introduce  null adapted coordinates
w.r.t. the trace of the metric on the cone $\mathcal{C}$
$(y^0,y^1,y^A)$ (\cite{c},\cite{l},\cite{f},\cite{v}) defined by
\begin{equation}\label{c2}
    y^0=x^0-r,\; y^1=r=\sqrt{\sum_{i=1}^n(x^i)^2},\;y^A,\;A=2,...,n;
    \end{equation}where
$(y^A)$ design local coordinates in the sphere $\mathcal{S}^{n-1}:
\sum_{i=1}^n(x^i)^2=1$, then
    \begin{equation}\label{c3}
        x^0=y^0+y^1,\;x^i=y^1\theta^i(y^A),\;\sum_i(\theta^i)^2=1;
    \end{equation}and the $\theta^i(y^A)$ are $\mathcal{C}^\infty$ functions of
    $y^A,\;A=2,...,n$;
and require the
     assumption:
    \begin{equation}\label{x3}
        \textbf{(A)}: \hbox{The vector fields $\frac{\partial}{\partial y^1}$ is tangent to the null geodesics
        generating $\mathcal{C}$ },
    \end{equation} which is an aspect of the affine parametrization condition of \cite{c},\cite{l},\cite{f},\cite{v},
     (since no a priori condition is given on the non-affinity constant on $\mathcal{C}$),
      and is equivalent to the requirement that on the cone the lapse $\tau^{-2}$ is
    an eigenvalue of the Riemannian metric $\overline{g}=(g^{ij})$, the
    corresponding eigenvector being $-q=(-q_i),\;q_i=-\frac{x^i}{r}$. The components of tensors in coordinates $(y^\mu)$
    are equipped with a tilde $"\;\widetilde{}\;"$. The assumption
    $\textbf{(A)}$ induces that the trace on the null cone
    $\mathcal{C}:y^0=0$,
    of the searched metric in temporal gauge is of the form
\begin{equation}\label{c4}
    g_{_{|C}}= \widetilde{g}_{01}dy^0dy^0+\widetilde{g}_{01}(dy^0dy^1+dy^1dy^0)+
    \widetilde{g}_{AB}dy^Ady^B,\;A,B=2,...,n.
\end{equation}The non-zero components of the trace on $\mathcal{C}$ of the inverse $g^{-1}$ of
$g$ satisfy:
\begin{equation}\label{y4}
    \widetilde{g}^{01}\widetilde{g}_{01}=1,\;
    \widetilde{g}^{11}+\widetilde{g}^{01}=0,\;(\widetilde{g}^{AB})=(\widetilde{g}_{AB})^{-1},\;A,B=2,...,n.
\end{equation}We remark that $\widetilde{g}_{01}<0$ according to the signature of $g$.
 Using the expression (\ref{c4}) of the trace on $\mathcal{C}$ of the metric
and its inverse (\ref{y4}), one derives, some algebraic relations
between the gravitational data on $\mathcal{C}$, the expressions of
the restrictions on $\mathcal{C}$ of the Christoffel symbols of the
metric. The details are in appendix \ref{A2}. One can then derive
the restrictions on $\mathcal{C}$ of the components of
$X=(X_{\mu\nu})$.
\subsection{Gauss and Codazzi's constraints on $\mathcal{C}$}
The vectorfield $l=(0,-\widetilde{g}^{01},0,...,0)$ is outgoing
normal to $\mathcal{C}$, the projection operator on $\mathcal{C}$ is
 $\pi$, s.t. $\pi_\mu^\nu=\delta_\mu^\nu+l_\mu l^\nu $, then the Gauss and Codazzi's constraints on the light cone
$\mathcal{C}$ read
\begin{equation}\label{y5}
    \widetilde{X}_{\mu\nu}l^\mu l^\nu=0,\;\widetilde{X}_{\mu\nu}l^\mu\pi^\nu_\lambda=0,
\end{equation}and resume to
\begin{equation}\label{y6}
    \widetilde{X}_{1\lambda}=0,\;\lambda=0,1,...,n.
\end{equation}
These constraints involve naturally only the Cauchy data for the
evolution system $(H_{\overline{g}},H_\rho)$. The other Einstein
equations
\begin{equation}\label{x6}
    \widetilde{X}_{00}=0,\;\widetilde{X}_{0A}=0,\;\widetilde{X}_{AB}=0,\;A,B=2,...,n;
\end{equation}
do not play the role of constraints as their expressions on
$\mathcal{C}$ contain second order outgoing derivatives of the
metric which are not part of the initial data of the third order
characteristic problem for the evolution system
$(H_{\overline{g}},H_\rho)$. Indeed, one has from straightforward
computations the following expressions which reveal the harmful
terms that one faces:
\begin{eqnarray}\label{x12}
  \widetilde{X}_{00} &=& \frac{1}{2}\widetilde{g}^{01}
   \widetilde{g}^{AB}\partial^2_{00}\widetilde{g}_{AB}+H_1(\widetilde{g},\partial\widetilde{g},\rho), \\
   \label{y13}
 \widetilde{X}_{AB} &=& \frac{1}{2}
   (\widetilde{g}^{01})^2\partial^2_{00}\widetilde{g}_{11}\widetilde{g}_{AB}+
    H_2(\widetilde{g},\partial\widetilde{g},\rho), \\\label{y14}
 \widetilde{X}_{0A}&=&-\frac{1}{2}\widetilde{g}^{01}\partial^2_{00}\widetilde{g}_{A1}+
    H_3(\widetilde{g},\partial\widetilde{g},\rho).
\end{eqnarray}
\subsection{Temporal gauge-dependent constraints}
In this subsection we are led to finding modifications or
combinations of the Einstein equations (\ref{x6}) in order to
construct the gauge-dependent constraints, and this is lengthy and
somewhat subtle. The Einstein equations $\widetilde{X}_{AB}=0$ read:
\begin{equation}\label{y15}
        \widetilde{X}_{AB}\equiv \widetilde{R}_{AB}-\frac{1}{2}\widetilde{g}_{AB}
    (2\widetilde{g}^{01}\widetilde{R}_{01}+\widetilde{g}^{11}\widetilde{R}_{11}+
    \widetilde{g}^{CD}\widetilde{R}_{CD})-\widetilde{T}_{AB}=0.
    \end{equation}They comprise the term $\widetilde{R}_{01}$ which according to the expressions of the components of
    the Ricci tensor on $\mathcal{C}$ ("see" appendix \ref{A3}), has second order outgoing derivative of the
    metric, ie:
        \begin{equation*}
        \widetilde{R}_{01}=-\frac{1}{2}
   (\widetilde{g}^{01})^2\partial^2_{00}\widetilde{g}_{11}+
   H_4(\widetilde{g},\partial\widetilde{g}).
    \end{equation*}The treatment of this term induces the equations
    \begin{equation*}
    \widetilde{X}_{AB}-\frac{\widetilde{g}^{CD}\widetilde{X}_{CD}}{n-1}\widetilde{g}_{AB}=0,\;
    A,B,C,D=2,...,n,
    \end{equation*}
    which are equivalent to
    \begin{equation*}
        \widetilde{R}_{AB}-\widetilde{T}_{AB}-\frac{\widetilde{g}^{CD}
(\widetilde{R}_{CD}-\widetilde{T}_{CB})}{n-1}\widetilde{g}_{AB}=0
,\;
    A,B,C,D=2,...,n.
    \end{equation*}
 These latter equations do not contain second order outgoing derivatives
of the metric. The Einstein equation $\widetilde{X}_{00}=0$ in turn
reads:
\begin{equation}\label{y16}
        \widetilde{X}_{00}\equiv \widetilde{R}_{00}-\frac{1}{2}\widetilde{g}_{00}
    (2\widetilde{g}^{01}\widetilde{R}_{01}+\widetilde{g}^{11}\widetilde{R}_{11}+
    \widetilde{g}^{CD}\widetilde{R}_{CD})-\widetilde{T}_{00}=0.
    \end{equation}Second order outgoing derivatives in this equation are due to
     the terms $\widetilde{R}_{00}$ and $\widetilde{R}_{01}$ since
     ("see" appendix \ref{A3})
     \begin{equation*}
        \widetilde{R}_{00}=-\frac{1}{2}
   (\widetilde{g}^{01})^2\partial^2_{00}\widetilde{g}_{11}+
   \frac{1}{2}
   (\widetilde{g}^{AB})^2\partial^2_{00}\widetilde{g}_{AB}+
   H_5(\widetilde{g},\partial\widetilde{g}).
     \end{equation*}Dealing with these harmful terms led us to using
     of the expression
\begin{equation}\label{x14}
        \frac{\partial}{\partial y^0}(\widetilde{\Gamma}^0+\widetilde{\Gamma}^1) =\frac{1}{2}
   (\widetilde{g}^{01})^2\partial^2_{00}\widetilde{g}_{11}-\frac{1}{2}\widetilde{g}^{01}
   \widetilde{g}^{AB}\partial^2_{00}\widetilde{g}_{AB}+H_6(\widetilde{g},\partial\widetilde{g}),
\end{equation}which reflects the "time in wave gauge" property. This
analysis results to considering the equation
\begin{equation*}
\widetilde{
X}_{00}-\widetilde{g}_{01}\frac{\widetilde{g}^{CD}\widetilde{X}_{CD}}{n-1}+
   \widetilde{g}_{01}\frac{\partial (\widetilde{\Gamma}^0+\widetilde{\Gamma}^1)}{\partial
   y^0}   =0,\;A,B,C,D=2,...,n,
\end{equation*}which does not contain second order outgoing derivatives of
the metric and is equivalent to
   \begin{equation*}
   \widetilde{R}_{00}-\widetilde{T}_{00}-
   \widetilde{g}_{01}\frac{\widetilde{g}^{AB}(\widetilde{R}_{AB}-\widetilde{T}_{AB})}{n-1}+
   \widetilde{g}_{01}\frac{\partial (\widetilde{\Gamma}^0+\widetilde{\Gamma}^1)}{\partial
   y^0}   =0.
   \end{equation*}We remark that the Einstein equations $\widetilde{X}_{0A}=0$
    are not involved in the construction of the gauge-dependent constraints and their
    realization on the cone $\mathcal{C}$ will be obtained in a more indirect way.
 The construction of constraints in this section thus ends up by the following theorem.
 \begin{theorem}\label{th1} Let
$(\overline{g},\rho)$ be any $\mathcal{C}^\infty$ solution of the
evolution system $(H_{\overline{g}},H_\rho)$
 in a neighborhood $\mathcal{V}$ of $\mathcal{C}\times \mathbb{R}^n$, and
 let $g$ associated to $\overline{g}$ of the form (\ref{e1})
  s.t. the temporal gauge condition is satisfied in $Y_O=\{y^0\geq 0\}$. One sets
$\widetilde{X}_{\mu\nu}\equiv
\widetilde{G}_{\mu\nu}-\widetilde{T}_{\mu\nu}$, and
     one assumes that w.r.t. the metric $g$, the hypothesis $\textbf{(A)}$ (\ref{x3}) and the
     relations
     \begin{equation}\label{x4}
        \widetilde{X}_{1\lambda}=0,
   \lambda=0,...,n,
     \end{equation}
     \begin{equation}\label{y7}
\widetilde{X}_{AB}-\frac{\widetilde{g}^{CD}\widetilde{X}_{CD}}{n-1}\widetilde{g}_{AB}=0,
       \end{equation}
\begin{equation}\label{x5}
\widetilde{
X}_{00}-\widetilde{g}_{01}\frac{\widetilde{g}^{CD}\widetilde{X}_{CD}}{n-1}+
   \widetilde{g}_{01}\frac{\partial (\widetilde{\Gamma}^0+\widetilde{\Gamma}^1)}{\partial
   y^0}   =0,\;A,B,C,D=2,...,n;
\end{equation}
are satisfied on $\mathcal{C}$; if furthermore one has
$X_{0k}(O)=0,\;k=1,...,n; (\partial_0X_{0s})(O)=0; s=1,...,n, s\neq
{s_0},q^{s_0}(O)\neq 0 $;
     then $(g,\rho)$ is solution of the Einstein-Vlasov system
     $(H_{g},H_\rho)$ in $Y_O$.\end{theorem}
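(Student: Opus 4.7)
The strategy is to reduce Theorem \ref{th1} to Proposition \ref{p1}: once we establish that $\widetilde{X}_{\mu\nu}$ vanishes on all of $\mathcal{C}$, the pointwise hypothesis $(\partial_0 X_{0s})(O) = 0$ listed in the theorem is precisely what Proposition \ref{p1} needs to propagate this vanishing into the bulk $Y_O$. The core of the proof is therefore to deduce $\widetilde{X}_{\mu\nu}|_\mathcal{C} = 0$ from the three constraint groups (\ref{x4}), (\ref{y7}), (\ref{x5}) together with the pointwise data $X_{0k}(O)=0$.

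The first observation I would make is that the temporal gauge $\Gamma^0 = 0$, valid throughout $Y_O$, simplifies (\ref{x5}) drastically. Since in any coordinate chart the contracted Christoffel symbols satisfy $\Gamma^\mu = -\Box_g y^\mu$, and $y^0 + y^1 = x^0$ from (\ref{c3}), one obtains
\begin{equation*}
\widetilde{\Gamma}^0 + \widetilde{\Gamma}^1 = -\Box_g(y^0+y^1) = -\Box_g x^0 = \Gamma^0 = 0
\end{equation*}
throughout $Y_O$, hence $\partial(\widetilde{\Gamma}^0+\widetilde{\Gamma}^1)/\partial y^0 \equiv 0$. Constraint (\ref{x5}) then reduces on $\mathcal{C}$ to $\widetilde{X}_{00} = \widetilde{g}_{01}\widetilde{g}^{CD}\widetilde{X}_{CD}/(n-1)$; combined with (\ref{y7}) this gives $\widetilde{X}_{AB} = \widetilde{g}_{AB}\widetilde{g}^{CD}\widetilde{X}_{CD}/(n-1)$, and (\ref{x4}) gives $\widetilde{X}_{1\lambda} = 0$ directly. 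Writing $W := \widetilde{g}^{CD}\widetilde{X}_{CD}$, the vanishing of all $\widetilde{X}_{\mu\nu}|_\mathcal{C}$ is thus reduced to proving $W \equiv 0$ and $\widetilde{X}_{0A} \equiv 0$ on $\mathcal{C}$.

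To close this, I would use the twice-contracted Bianchi identity $\nabla_\mu X^{\mu\nu}=0$, which holds automatically in $Y_O$ since $\nabla_\mu G^{\mu\nu}\equiv 0$ and $\nabla_\mu T^{\mu\nu}=0$ is a consequence of the Vlasov equation $H_\rho$. Restricting this identity with $\nu = A$ and $\nu = 0$ to $\mathcal{C}$, the vanishings $\widetilde{X}_{1\lambda}|_\mathcal{C}=0$ should eliminate the dangerous tangential contributions, and, after using the algebraic reductions above, what remains should be a closed linear first-order transport system along the null generators $\partial/\partial y^1$ with unknowns $W$ and $\widetilde{X}_{0A}$. The pointwise conditions $X_{0k}(O)=0$, translated into the $(y^\mu)$ frame at the vertex, give the initial data at $y^1=0$, and uniqueness for linear ODEs along each generator then forces $W \equiv 0$ and $\widetilde{X}_{0A} \equiv 0$ on all of $\mathcal{C}$. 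Combined with the algebraic reductions this yields $\widetilde{X}_{\mu\nu}|_\mathcal{C}=0$, whence Proposition \ref{p1} finishes the argument.

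The principal technical obstacle lies in the closure of this transport system: a priori the restricted Bianchi identities on $\mathcal{C}$ also contain transverse derivatives $\partial_0\widetilde{X}_{1\lambda}|_\mathcal{C}$ that the constraints do not directly control, and these must be expressed in terms of tangential data by invoking the Leray-hyperbolic system (\ref{e12})--(\ref{e14}) satisfied by $X$, which on $\mathcal{C}$ determines $[\partial_0 X^{00}]$ and $[\partial_0 X^{ij}]$ in terms of $[\partial_0 X^{0i}]$---exactly the manipulation carried out inside the proof of Proposition \ref{p1}. The bookkeeping will require the explicit formulae for the Christoffel symbols and Ricci components on $\mathcal{C}$ recorded in the appendices, and verifying that the resulting system truly has the structure of regular linear transport along each null generator, in particular that it remains integrable all the way up to the vertex, is where the bulk of the technical work will lie.
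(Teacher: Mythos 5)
Your proposal is correct and follows essentially the same route as the paper: your algebraic reduction of all components of $X$ on $\mathcal{C}$ to the unknowns $W=\widetilde{g}^{CD}\widetilde{X}_{CD}$ and $\widetilde{X}_{0A}$ is just a reparametrization of the paper's relations (\ref{x9})--(\ref{x11}) in terms of the $X_{0k}$, and the closed homogeneous transport system you anticipate along the null generators --- with the transverse derivatives eliminated by feeding the evolution system $H_{\overline{g}}$ back into the contracted Bianchi identities and using the characteristic condition $g^{00}+q^jq_j=0$ --- is precisely the content of Lemma \ref{l1} and of the system (\ref{a54}). Your explicit use of the wave-gauge identity $\widetilde{\Gamma}^0+\widetilde{\Gamma}^1=-\square_g x^0=\Gamma^0=0$ to collapse (\ref{x5}) onto $\widetilde{X}_{00}=\widetilde{g}_{01}W/(n-1)$ is a clean shortcut to what the paper obtains by ``straightforward calculations''.
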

To prove the theorem, we establish the following lemma.
     One sets $[X_{\mu\nu}]=
    {X_{\mu\nu}}_{|\mathcal{C}}$.
     \begin{lemma}\label{l1}
     For any $\mathcal{C}^\infty$ solution
$(\overline{g},\rho)$ of the evolution system
$(H_{\overline{g}},H_\rho)$
 in a neighborhood $V\times \mathbb{R}^{n+1}$ of a smooth hypersurface $\mathcal{I}\times \mathbb{R}^{n+1}$
 with $\mathcal{I}$ of equation $\mathcal{I}: x^0-\phi (x^i)=0$, and
 $g$ associated to $\overline{g}$ of the form (\ref{e1})
  s.t. the temporal gauge condition is satisfied in $V$. One sets $q_i=-\frac{\partial\phi}{\partial x^i}$.
  Then the tensor $X=(X_{\mu\nu})$ restricted to $\mathcal{I}$
  satisfies the homogeneous linear system of partial differential equations
  \begin{equation}\label{x8}
    q^j\partial_j[ X_{k0}]+q^j \partial_k [X_{j0}]-q_k
    g^{lm}\partial_l[ X_{m0}]+g^{ij}\partial_i[ X_{jk}]+A^{\mu\nu}_k[X_{\mu\nu}]=0.
\end{equation}\end{lemma}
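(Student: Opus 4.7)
The starting observation is that, for $(\overline{g},\rho)$ solving the evolution system $(H_{\overline{g}},H_\rho)$, the tensor $X_{\mu\nu}=G_{\mu\nu}-T_{\mu\nu}$ satisfies $\nabla^\alpha X_{\alpha\beta}=0$ throughout $V$; this follows from the contracted second Bianchi identity together with $\nabla^\alpha T_{\alpha\beta}=0$, the latter being a consequence of $H_\rho$ via the standard computation for Vlasov matter. Hence the plan is to extract (\ref{x8}) from $\nabla^\alpha X_{\alpha k}=0$, restricted to $\mathcal{I}$, by converting every ambient derivative into a tangential one on $\mathcal{I}$ and using the ambient equations (\ref{e12})--(\ref{e13}) to trade the residual normal derivatives of $X$ for tangential data.

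Expanding $\nabla^\alpha X_{\alpha k}=0$ in temporal gauge (so $g^{00}=-\tau^{-2}$, $g^{0i}=0$), the principal part is $-\tau^{-2}\partial_0 X_{0k}+g^{ij}\partial_i X_{jk}$, all remaining contributions being zeroth order in $X$ and expressible through the Christoffel symbols of $g$. On $\mathcal{I}\colon x^0=\phi(x^i)$, the restriction $[F]$ of an ambient function satisfies the chain-rule identity $[\partial_i F]=\partial_i[F]+q_i[\partial_0 F]$, where on the right $\partial_i$ now denotes the tangential derivative on $\mathcal{I}$. Applied to $X_{jk}$, this identity turns $g^{ij}[\partial_i X_{jk}]$ into $g^{ij}\partial_i[X_{jk}]+q^j[\partial_0 X_{jk}]$ and thereby isolates the transversal terms $[\partial_0 X_{0k}]$ and $[\partial_0 X_{jk}]$ that must be removed.

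These normal derivatives are handled through (\ref{e12})--(\ref{e13}). Restricted to $\mathcal{I}$, (\ref{e13}) expresses $[\partial_0 X^{ij}]$ linearly in $[X]$ and $[\partial_s X^{k0}]$; passing to lowered indices via $X_{mn}=g_{mi}g_{nj}X^{ij}$ and applying the chain rule again, $[\partial_s X^{k0}]=-\tau^{-2}g^{kj}\bigl(\partial_s[X_{0j}]+q_s[\partial_0 X_{0j}]\bigr)+(\ldots)$, one writes $[\partial_0 X_{jk}]$ as a linear combination of $[X]$, tangential derivatives $\partial_s[X_{0l}]$, and residual normal derivatives $[\partial_0 X_{0l}]$. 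Similarly $[\partial_0 X_{00}]$ is eliminated via (\ref{e12}) (equivalently, via Bianchi $\nabla^\alpha X_{\alpha 0}=0$ restricted to $\mathcal{I}$). The key algebraic step is then to form the combination $\nabla^\alpha X_{\alpha k}-q_k\,\nabla^\alpha X_{\alpha 0}=0$ restricted to $\mathcal{I}$: after the substitutions, the precise structure $q^j\partial_j[X_{k0}]+q^j\partial_k[X_{j0}]-q_k g^{lm}\partial_l[X_{m0}]$ acting on $[X_{\cdot 0}]$ is exactly what is needed to annihilate the remaining $[\partial_0 X_{0l}]$ coefficients. What survives in the principal part is then precisely (\ref{x8}), while $A^{\mu\nu}_k[X_{\mu\nu}]$ absorbs the temporal-gauge Christoffel terms, the derivatives of $q_i$ and $\tau$, and all lower-order contributions produced by (\ref{e12})--(\ref{e13}).

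The main obstacle is exactly this cancellation: verifying that, after substituting (\ref{e12})--(\ref{e13}) into the combined Bianchi expression on $\mathcal{I}$, the coefficients of each surviving $[\partial_0 X_{0l}]$ indeed add to zero, so that the resulting equation is purely tangential. This requires careful bookkeeping of the coefficient structure of $L^{00}$ and $L^{ij}$, of the raising/lowering by $g$ and $g^{-1}$ in temporal gauge, and of the factors $q_i$, $q^j$, $\tau^{-2}$ that appear from the chain rule. Once this cancellation is confirmed, the remainder is routine index manipulation, yielding (\ref{x8}) with an explicit (though unenlightening) formula for $A^{\mu\nu}_k$.
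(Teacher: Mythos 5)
Your overall strategy coincides in outline with the paper's: restrict $\nabla^\alpha X_{\alpha k}=0$ to $\mathcal{I}$, split ambient derivatives via $[\partial_i F]=\partial_i[F]+q_i[\partial_0 F]$, and use the evolution system to convert the surviving normal derivatives $[\partial_0 X_{jk}]$ into tangential derivatives of $[X_{m0}]$. But there is a genuine gap, in two parts. First, you nowhere invoke the fact that $\mathcal{I}$ is characteristic, i.e.\ $g^{00}+q_jq^j=0$ on $\mathcal{I}$ (in temporal gauge $g^{0i}=0$, this is exactly the nullity of the conormal $(1,q_i)$). After all substitutions the coefficient of $[\partial_0 X_{0k}]$ is precisely $g^{00}+q_jq^j$: the $g^{00}$ comes from the Bianchi identity, and the $q_jq^j$ from the chain rule applied to the term $q^j\overline{\nabla}_jX_{k0}$ produced by the evolution equation $\partial_0(R_{jk}-\Lambda_{jk})=\overline{\nabla}_jX_{k0}+\overline{\nabla}_kX_{j0}$. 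Your proposed combination $\nabla^\alpha X_{\alpha k}-q_k\nabla^\alpha X_{\alpha 0}$ only disposes of the normal derivatives proportional to $q_k$ (which in fact cancel identically, since $q_kq^j[\partial_0 X_{j0}]-q_kg^{lm}q_l[\partial_0 X_{m0}]=0$); it cannot remove the $[\partial_0 X_{0k}]$ term, which vanishes only because the hypersurface is null. On a non-null $\mathcal{I}$ the conclusion fails as a purely tangential system, so this hypothesis cannot be elided from the argument.

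Second, the cancellation you correctly identify as ``the main obstacle'' is essentially the entire content of the lemma, and you defer it rather than carry it out. Worse, you propose to extract $[\partial_0 X^{ij}]$ from (\ref{e13}), whose coefficient operators $L^{ij}$ are never written explicitly in the paper (they are quoted from Choquet-Bruhat), so the bookkeeping you describe cannot be performed from the stated ingredients. The paper's proof avoids this by working directly with the evolution equation in the form $\partial_0(R_{ij}-\Lambda_{ij})-\overline{\nabla}_iX_{j0}-\overline{\nabla}_jX_{i0}=0$, decomposing $X_{jk}=R_{jk}-\Lambda_{jk}-\frac{1}{2}g_{jk}(R-\Lambda)$, and computing $\partial_0(R-\Lambda)$ from the $\beta=0$ Bianchi identity; this yields an explicit expression for $[\partial_0 X_{jk}]$ whose symmetrized-gradient structure simultaneously produces the principal part $q^j\partial_j[X_{k0}]+q^j\partial_k[X_{j0}]-q_kg^{lm}\partial_l[X_{m0}]$ of (\ref{x8}) and exhibits the cancellation of all normal derivatives. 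To complete your argument you would need to supply this computation, or an equivalently explicit form of $L^{ij}$, and then invoke the null character of $\mathcal{I}$ at the final step.
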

\begin{proof}
 $(\overline{g},\rho)$ is a $\mathcal{C}^\infty$ solution
of the evolution system $(H_{\overline{g}},H_\rho)$, and according
to the divergence free properties of the Einstein tensor
$(G_{\mu\nu})$ and the stress energy momentum tensor of matter
$(T_{\mu\nu})$ of $g$ (\ref{e1}), one has:
\begin{equation}\label{a50}
    \nabla^\alpha X_{\alpha\beta}{_{|\mathcal{I}}}=0,\;
\left(\partial_0
               (R_{ij}-\Lambda_{ij})-\overline{\nabla}_{i}X_{j0}-
\overline{\nabla}_{j}X_{i0}\right){_{|\mathcal{I}}}=0.
\end{equation}
Since $g_{0i}=0=g^{0i}$, the Bianchi identities induce the following
relations:
           \begin{equation*}
                g^{\alpha 0}\nabla_0 X_{\alpha\beta}+ g^{ij}\nabla_i
                X_{j\beta}=0,\;\forall \beta.
           \end{equation*}For $\beta=k$, one has successively:
           \begin{equation*}
             g^{00}\left(\partial_0 X_{0k}-\Gamma^\alpha_{00}
             X_{\alpha k}-\Gamma^\alpha_{0k}X_{0\alpha}\right) +
             g^{ij}\left(\partial_i X_{jk}-\Gamma^\alpha_{ij}
             X_{\alpha k}-\Gamma^\alpha_{ik}X_{j\alpha}\right)= 0
             \end{equation*}
            \begin{equation}\label{a44}
             g^{00}\partial_0 X_{0k}+g^{ij}\partial_i X_{jk}
             \underset{C_k}{\underbrace{-g^{00}\Gamma^\alpha_{00}X_{\alpha k}-
             g^{00}\Gamma^\alpha_{0k}X_{0\alpha}-
             g^{ij}\Gamma^\alpha_{ij}X_{\alpha k}-g^{ij}
              \Gamma^\alpha_{ik}X_{j\alpha}}}= 0.
           \end{equation}Now on $\mathcal{I}$, one has
     \begin{equation}
             {[\partial_i X_{jk}]} = {\partial_i[X_{jk}]+q_i [\partial_0
             X_{jk}]};
           \end{equation}and the system (\ref{a44}) restricted to $\mathcal{I}$ implies:
           \begin{equation}\label{a48}
             g^{00}\partial_0 X_{0k}+q^j[\partial_0{ X_{jk}}]+
             g^{ij}\partial_i [X_{jk}]+C_k = 0;
             \end{equation}where the $C_k$ are given by the
             relations (\ref{a44}).
             Furthermore, since $X_{\mu\nu}\equiv G_{\lambda\mu}-T_{\lambda\mu}=R_{\lambda\mu}-
   \Lambda_{\lambda\mu}-\frac{1}{2}g_{\lambda\mu}(R-\Lambda)$, one has:
 \begin{equation}\label{a47}
             \partial_0 X_{jk} = \partial_0 (R_{jk}-\Lambda_{jk})-
             \frac{1}{2}(\partial_0 (R-\Lambda))g_{jk}-\frac{1}{2}(R-\Lambda)\partial_0 g_{jk}
             \end{equation}On the other hand, since
              $    R-\Lambda= g^{00}(R_{00}-\Lambda_{00})+g^{kl}(R_{kl}-\Lambda_{kl}) $, one has
    \begin{equation*}
        \partial_0 (R-\Lambda) =(\partial_0
             g^{00})(R_{00}-\Lambda_{00})+g^{00}\partial_0 (R_{00}-\Lambda_{00})+
    \end{equation*}
               \begin{equation}\label{a45}
             (\partial_0
             g^{kl})(R_{kl}-\Lambda_{kl})+g^{kl}\partial_0 (R_{kl}-\Lambda_{kl}).
           \end{equation}Considering the Bianchi identities for
           $\beta=0$, one has
            successively:
               \begin{equation*}
                    g^{00}\nabla_0 X_{00}+g^{ij}\nabla_i X_{j0} = 0,
               \end{equation*}
               \begin{equation*}
                     g^{00}(\partial_0 X_{00}-2\Gamma^\alpha_{00}X_{\alpha 0})+
             g^{ij}\nabla_i X_{j0} = 0,
               \end{equation*}
               \begin{equation*}
                    g^{00}\partial_0 [(R_{00}-\Lambda_{00})-\frac{1}{2}g_{00}(R-\Lambda)]-
             2g^{00}\Gamma^\alpha_{00}X_{\alpha 0}+g^{ij}\nabla_iX_{j0}
              = 0,
               \end{equation*}
               \begin{equation*}
                    g^{00}\partial_0( R_{00}-\Lambda_{00})-\frac{1}{2}g^{00}(\partial_0
              g_{00})(R-\Lambda)-\frac{1}{2}\partial_0
              (R-\Lambda)-
               \end{equation*}
  \begin{equation}\label{a46}
              2g^{00}\Gamma^\alpha_{00}X_{\alpha 0}+g^{ij}\nabla_i
              X_{j0}=0.
           \end{equation} Combining the relations (\ref{a45}) and
           (\ref{a46}), one has:\begin{equation}\label{a43}
             \frac{1}{2}\partial_0 (R-\Lambda) =g^{kl}\partial_0 (R_{kl}-\Lambda_{kl})-g^{ij}\nabla_i
             X_{j 0}+A,
           \end{equation}where
           \begin{equation*}
                A\equiv 2^{-1}g^{00}(\partial_0
             g_{00})(R-\Lambda)+2g^{00}\Gamma^\alpha_{00}X_{\alpha 0}+(\partial_0
             g^{00})(R_{00}-\Lambda_{00})
           \end{equation*}
\begin{equation*}
   +(\partial_0
             g^{kl})(R_{kl}-\Lambda_{kl}).
\end{equation*}
Using the relations
$R_{00}-\Lambda_{00}=X_{00}+\frac{1}{2}g_{00}(R-\Lambda),\;
R_{kl}-\Lambda_{kl}=X_{kl}+\frac{1}{2}g_{kl}(R-\Lambda),\;
R-\Lambda=\frac{2}{1-n} X:=\frac{2}{1-n}g^{\lambda\delta}
X_{\lambda\delta},$, $A$ reads:
\begin{equation}\label{y9}
    A= \frac{1}{1-n}g_{kl}(\partial_0
             g^{kl})g^{\lambda\delta}X_{\lambda\delta}+2g^{00}\Gamma^\alpha_{00}X_{\alpha 0}+(\partial_0
             g^{00})X_{00}+(\partial_0
             g^{kl})X_{kl}
\end{equation}
 The system (\ref{a47}) now reads:
 \begin{equation*}
    \partial_0 X_{jk}=\partial_0 (R_{jk}-\Lambda_{jk})-g_{jk}
          \left\{g^{lm}\partial_0 (R_{lm}-\Lambda_{lm})-g^{lm}\nabla_l X_{m0}\right\}
 \end{equation*}
          \begin{equation}
          -A g_{jk}-\frac{(\partial_0 g_{jk})g^{\lambda\delta} X_{\lambda\delta}}{1-n}.
           \end{equation}The system (\ref{a48}) can then be written:
           \begin{equation*}
                g^{00}\partial_0 X_{0k}+
           \end{equation*}
           \begin{equation*}
             q^j\left(\partial_0 (R_{jk}-\Lambda_{jk})-g_{jk}g^{lm}\partial_0
      (R_{lm}-\Lambda_{lm})+g_{jk}g^{lm}\nabla_l X_{m0}-A g_{jk}\right) +
           \end{equation*}
   \begin{equation}\label{a50}
    g^{ij}\partial_i [X_{jk}]+ C_k-\frac{g^{\lambda \delta}X_{\lambda\delta}}{1-n}(\partial_0 g_{jk})q^j = 0.
   \end{equation}From the hypotheses $(\overline{g}_{ij},\rho)$
   satisfies
           \begin{equation}
               \left(\partial_0
               (R_{ij}-\Lambda_{ij})-\overline{\nabla}_{i}X_{j0}-
\overline{\nabla}_{j}X_{i0}\right){_{|\mathcal{I}}}=0,
           \end{equation}the system (\ref{a50}) becomes:
           \begin{equation*}
                g^{00}\partial_0  X_{0k}+
           \end{equation*}
           \begin{equation*}
 q^j\left\{\overline{\nabla}_j  X_{k0}+
             \overline{\nabla}_k  X_{j0}-g_{jk}g^{lm}
             (\overline{\nabla}_l X_{m0}+\overline{\nabla}_m  X_{l0})+
             g_{jk}g^{lm}\nabla_l  X_{m0}\right\}-
           \end{equation*}
 \begin{equation}\label{y12}
    A q_{k}+g^{ij}\partial_i [ X_{jk}]+C_k-\frac{g^{\lambda\delta}
X_{\lambda\delta}}{1-n}(\partial_0 g_{jk})q^j =
             0,
 \end{equation}now, given the expressions of $\nabla_l  X_{m0}$ and $\overline{\nabla}_l  X_{m0}$, one has
\begin{equation*}
    g^{00}\partial_0  X_{0k}+q^j(\partial_j [ X_{k0}]+q_j[\partial_0 X_{k0}])+
    q^j(\partial_k[ X_{j0}]+q_k[\partial_0  X_{j0}]) -
\end{equation*}
\begin{equation}\label{y11}
    q_k g^{lm}(\partial_l[ X_{m0}]+
    q_l[\partial_0
     X_{m0}])+
    g^{ij}\partial_i[ X_{jk}]+A_k= 0,
\end{equation} with:
  \begin{equation*}
    A_k\equiv C_k-2q^j\Gamma^n_{jk} X_{n0}+
    q_kg^{lm}\Gamma^s_{lm} X_{s0}-  \end{equation*}
  \begin{equation}\label{y10}
    q_k g^{lm}
    \left(\Gamma^0_{l0} X_{m0 }+
    \Gamma^0_{lm} X_{00 }+\Gamma_{l0}^sX_{ms}\right)-A q_k-
    \frac{g^{\lambda\delta} X_{\lambda\delta}}{1-n}(\partial_0 g_{jk})q^j.
  \end{equation}The terms $A$ and $C_k$ in $A_k$ are given in
  (\ref{y9}) and (\ref{a44}).\\
  Now, the hypersurface $\mathcal{I}$ being
characteristic ($g^{00}+q^jq_j=0$ on $\mathcal{I}$), the system
(\ref{y11}) resumes to
\begin{equation}
    q^j\partial_j[ X_{k0}]+q^j \partial_k [ X_{j0}]-q_k
    g^{lm}\partial_l[ X_{m0}]+g^{ij}\partial_i[ X_{jk}]+A_k=0.
\end{equation}The $A_k$
are linear combinations of $[X_{\mu\nu}]=
    {X_{\mu\nu}}_{|\mathcal{I}}$.
    \end{proof}
\begin{proof}[Proof of theorem \ref{th1}] If the hypotheses of the theorem
are satisfied together with the relations
  (\ref{x4}), (\ref{y7}), (\ref{x5}) for $g$
of the form (\ref{e1}), then straightforward calculations imply that
on $\mathcal{C}$ one has:
\begin{equation}\label{x9}
   X_{00}=\widetilde{g}_{01}\frac{\widetilde{g}^{CD}\widetilde{X}_{CD}}{n-1}=-\widetilde{g}_{01}q^sX_{0s},
    \end{equation}
    \begin{equation}\label{x32}
    X_{0k}=\widetilde{g}_{01}\frac{\widetilde{g}^{CD}\widetilde{X}_{CD}}{n-1}q_k+
    \frac{\partial y^A}{\partial x^k}\widetilde{X}_{0A},
    \end{equation}

    \begin{equation}\label{x10}
    X_{jk}=\left(q_j\frac{\partial y^A}{\partial x^k}+
    q_k\frac{\partial y^A}{\partial x^j}\right)\widetilde{X}_{0A}+
    (2\widetilde{g}_{01}q_jq_k+g_{jk})\frac{\widetilde{g}^{CD}\widetilde{X}_{CD}}{n-1},
\end{equation}
\begin{equation}\label{x11}
    X_{jk}=q_jX_{0k}+q_kX_{0j}-g_{jk}q^sX_{0s},
\end{equation}where $ \;A,B,C,D=2,...,n; i,j,k,s=1,...,n$.
 By combining the relations (\ref{x9})-(\ref{x11}) and the result (\ref{x8}) of
 lemma \ref{l1}, one obtains on $\mathcal{C}$ the homogeneous linear differential
system
\begin{equation}\label{a54}
    \frac{\partial [X_{0k}]}{\partial y^1}+
    L_k^s([X_{0s}])=0,\;k=1,...,n.
\end{equation} One deduces that if the subset
$X_{0k}=0,\;k=1,...,n$,
 of the Einstein equations are satisfied at the vertex O of the
 cone,
 then $X_{0k}=0,\;k=1,...,n$, on $\mathcal{C}$, and after that
$X_{\mu\nu}=0,\;\forall \mu,\nu$, on $\mathcal{C}$, thanks to the
relations (\ref{x9})-(\ref{x11}). The proof ends by using the
proposition \ref{p1}.
\end{proof}
 \begin{remark} We emphasize that the tensor $(Z_{AB}),\;Z_{AB}=\widetilde{X}_{AB}-
 \frac{\widetilde{g}^{CD}\widetilde{X}_{CD}}{n-1}\widetilde{g}_{AB}$ $,A,B,C,D=2,...,n$, is a traceless
 tensor,
 consequently one equation chosen suitably will not be considered while solving the constraints (\ref{y7}) and
 will be proved automatically satisfied by the traceless property.
 \end{remark}
  \section{Constraints and Cauchy data for $(H_{\overline{g}},H_\rho)$}
    The resolution of the constraints's equations requires an exhaustive description
of the constraints in terms of the Cauchy data for the evolution
system $(H_{\overline{g}},H_\rho)$, and thereby the choice of free
data. One sets:
\begin{equation*}
    \widetilde{g}_{01}{_{|\mathcal{C}}}=\theta,\;\widetilde{g}_{AB}{_{|\mathcal{C}}}=\Theta_{AB},\;
    \Theta=(\Theta_{AB}),\;\rho_{|\mathcal{C}}=\textbf{f},\;\partial_\mu=\frac{\partial}{\partial
y^\mu},
\end{equation*}
\begin{equation}\label{y17}
\psi_{\mu\nu}=
    \frac{\partial\widetilde{g}_{\mu\nu}}{\partial y^0}{_{|\mathcal{C}}}
    ,\;\pi^\alpha=\frac{\partial y^\alpha}{\partial
x^\delta}p^\delta,\;d\pi=d\pi^1...d\pi^n.
\end{equation}
\subsection{The Hamiltonian constraint on $\mathcal{C}$}
 The Hamiltonian constraint $\widetilde{X}_{11}=0$ is the so called Raychaudhuri equation.
 The equation
$\widetilde{X}_{11}=0$ reads
$\widetilde{R}_{11}-\widetilde{T}_{11}=0$. It is therefore necessary
to compute the terms $\widetilde{R}_{11}$ and $\widetilde{T}_{11}$
in order to describe this equation. From the definition of the Ricci
tensor and according to the expressions of the trace of the metric
on $\mathcal{C}$ and the corresponding Christoffel symbols ("see"
appendix \ref{A2}), on has:
\begin{equation*}
   \widetilde{ R}_{11}=\frac{1}{4}\widetilde{g}^{01}\widetilde{g}^{CB}\partial_1
\widetilde{g}_{CB}\partial_0\widetilde{g}_{11}+
\frac{1}{2}\partial_1(\widetilde{g}^{CB}\partial_1
\widetilde{g}_{CB})+\frac{1}{4}(\widetilde{g}^{CB}\partial_1\widetilde{g}_{BD})
    (\widetilde{g}^{DE}\partial_1\widetilde{g}_{CE})-
\end{equation*}
\begin{equation}\label{y22}
    \frac{1}{2}\widetilde{g}^{01}\partial_1\widetilde{g}_{01} \widetilde{g}^{CB}\partial_1
\widetilde{g}_{CB}.
\end{equation}
Relying on the definition of the stress momentum tensor of Vlasov
matter, one has:
\begin{equation}\label{y18}
    \widetilde{T}_{11}=-\int_{\mathbb{R}^n}\textbf{f}
    \frac{(\widetilde{g}_{01})^2(\pi^0)^2\sqrt{|\widetilde{g}|}}{\pi^0+\pi^1}d\pi.
\end{equation}
The mass shell equation on $\mathcal{C}\times \mathbb{R}^n$ reduces
to
\begin{equation}\label{y19}
    2\widetilde{g}_{01}\pi^0\pi^1=-(\widetilde{g}_{01}(\pi^0)^2+\widetilde{g}_{AB}\pi^A\pi^B+\textbf{m}^2).
\end{equation}On the other hand, the positivity of $g_{ij}(\lambda p^i+q^i)(\lambda
p^j+q^j)$ for every $\lambda \in \mathbb{R}^\ast$, and the fact that
the cone is characteristic imply that
\begin{equation}\label{y20}
    (p^0)^2-(q_ip^i)^2\geq \tau^{-2}\textbf{m}^2,
\end{equation}one deduces then that for a non-zero mass
$\textbf{m}$, one has $|p^0|>|q_ip^i|$, and hence $\pi^0 >0$. For a
zero mass $(\textbf{m}=0)$, one has on $\mathcal{C}\times
\mathbb{R}^n$
\begin{eqnarray*}
    (p^0)^2&=&- g^{00}g_{ij}p^ip^j=-g^{00}\left(-\widetilde{g}_{01}q_iq_j+
    \frac{\partial
y^A}{\partial x^i}
        \frac{\partial y^B}{\partial
        x^j}\widetilde{g}_{AB}\right)p^ip^j\\
        &=& (q_ip^i)^2-g^{00}\frac{\partial
y^A}{\partial x^i}
        \frac{\partial y^B}{\partial
        x^j}\widetilde{g}_{AB}p^ip^j
\end{eqnarray*}and subsequently
\begin{equation}
(p^0)^2=(q_ip^i)^2-g^{00}\widetilde{g}_{AB}\pi^A\pi^B.
\end{equation}In the case of zero mass $(\textbf{m}=0)$, one thus assumes that the support of the initial density of
particles $\textbf{f}$ is a subset of $\{(\pi^\delta)\in
\mathbb{R}^{n+1}, \;\sum_A(\pi^A)^2>c_2>0\}$, and as a consequence
of the symmetric positive definite character of the matrix
$(\widetilde{g}_{AB})$, one has $\pi^0>0$. Having $\pi^0>0$ induces
in virtue of the relation (\ref{y19}) that $\pi^1
>0$ since $\widetilde{g}_{01}<0$. The expression of $\pi^0$ on $\mathcal{C}\times
\mathbb{R}^n$ is therefore
\begin{equation}\label{y24}
    \pi^0=-\pi^1+\sqrt{(\pi^1)^2-\widetilde{g}^{01}(\widetilde{g}_{AB}\pi^A\pi^B+\textbf{m}^2)},
\end{equation}and we end up with the following expression for
$\widetilde{T}_{11}$ where
$\widetilde{\gamma}=(\widetilde{g}_{AB})$:
\begin{equation}\label{y21}
    \widetilde{T}_{11}=-\int_{\mathbb{R}^n}\textbf{f}\frac{|\widetilde{g}_{01}|^3
    (-\pi^1+\sqrt{(\pi^1)^2-\widetilde{g}^{01}(\widetilde{g}_{AB}\pi^A\pi^B+\textbf{m}^2)})^2}{
    \sqrt{(\pi^1)^2-\widetilde{g}^{01}(\widetilde{g}_{AB}\pi^A\pi^B+\textbf{m}^2)}}\sqrt{|\widetilde{\gamma}|}\;d\pi.
\end{equation}We now combine this expression of $\widetilde{T}_{11} $
 with the one of $\widetilde{R}_{11}$
(\ref{y22}) and use the notations (\ref{y17}) to obtain the
following proposition.
 \begin{proposition}\label{p2}Let $(\overline{g},\rho)$ be
any $\mathcal{C}^\infty$ solution of the evolution system
$(H_{\overline{g}},H_\rho)$
 in a neighborhood $\mathcal{V}$ of $\mathcal{C}\times \mathbb{R}^n$, and
 let $g$ associated to $\overline{g}$ of the form (\ref{e1})
  s.t. the temporal gauge condition is satisfied in $Y_O=\{y^0\geq
  0\}$. The Hamiltonian constraint $\widetilde{X}_{11}=0$ is
  satisfied on $\mathcal{C}$ if and only if the Cauchy data
  $\theta,\;\Theta=(\Theta_{AB}),\;\textbf{f},\;\psi_{11}$
  verify the partial differential relation
 \begin{eqnarray*}
    \theta^{-1}(\Theta^{AB}\partial_1\Theta_{AB})\psi_{11}+2\partial_1(\Theta^{AB}\partial_1\Theta_{AB})+
 \end{eqnarray*}
 \begin{equation*}
    (\Theta^{CB}\partial_1\Theta_{BD})(\Theta^{DE}\partial_1\Theta_{CE})-
    2\theta^{-1}
    (\Theta^{AB}\partial_1
    \Theta_{AB})\partial_1\theta=
 \end{equation*}
\begin{equation}\label{x18}
    -4\int_{\mathbb{R}^n}
 \frac{\textbf{f}\;|\theta|^3\left(\sqrt{(\pi^1)^2-\theta^{-1}(\textbf{m}^2+
 \Theta_{AB}\pi^A\pi^B})-\pi^1\right)^2\sqrt{|\Theta|}}{\sqrt{(\pi^1)^2-\theta^{-1}(\textbf{m}^2+
 \Theta_{AB}\pi^A\pi^B})}
 d\pi.
\end{equation}
\end{proposition}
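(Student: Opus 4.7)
The plan is to reduce the Hamiltonian constraint to a single scalar equation $\widetilde{R}_{11}=\widetilde{T}_{11}$ on the cone and then substitute the explicit expressions already assembled above. Since the trace (\ref{c4}) of the metric on $\mathcal{C}$ has vanishing coefficient in the $dy^1\,dy^1$ direction, one has $\widetilde{g}_{11}|_{\mathcal{C}}=0$; hence $\widetilde{G}_{11}|_{\mathcal{C}}=\widetilde{R}_{11}-\tfrac{1}{2}\widetilde{g}_{11}\widetilde{R}=\widetilde{R}_{11}$, and the constraint $\widetilde{X}_{11}=0$ is equivalent to the scalar identity $\widetilde{R}_{11}=\widetilde{T}_{11}$ on $\mathcal{C}$.

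Next I would insert the formula (\ref{y22}) for $\widetilde{R}_{11}$ derived via appendix \ref{A3}. It depends only on the $y^1$-tangential derivatives of $\widetilde{g}_{01}$ and $\widetilde{g}_{AB}$ together with the single transverse datum $\partial_0\widetilde{g}_{11}$, that is, precisely the quantities isolated in the notation (\ref{y17}). Writing $\widetilde{g}_{01}|_{\mathcal{C}}=\theta$, $\widetilde{g}_{AB}|_{\mathcal{C}}=\Theta_{AB}$, $\partial_0\widetilde{g}_{11}|_{\mathcal{C}}=\psi_{11}$, and using the algebraic relations (\ref{y4}) to identify $\widetilde{g}^{01}=\theta^{-1}$ and $\widetilde{g}^{AB}=\Theta^{AB}$, the quantity $4\widetilde{R}_{11}$ reproduces exactly the left-hand side of (\ref{x18}).

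Third, I would insert the formula (\ref{y21}) for $\widetilde{T}_{11}$, whose derivation has already been sketched in the lead-up to the proposition: starting from (\ref{e4}), rewriting $p_\alpha p_\beta$ in the null-adapted frame to arrive at (\ref{y18}), and then eliminating the mass-shell variable $\pi^0$ via the quadratic relation (\ref{y19}). The choice of the positive root yielding (\ref{y24}) rests on the bound (\ref{y20}) when $\textbf{m}\neq 0$ and, when $\textbf{m}=0$, on the support assumption imposed on $\textbf{f}$; these force $\pi^0>0$, which combined with $\widetilde{g}_{01}<0$ and (\ref{y19}) yields $\pi^1>0$, thereby justifying the radical expression throughout the support of the integrand. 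Multiplying the identity $\widetilde{R}_{11}=\widetilde{T}_{11}$ by $4$ and collecting terms then reproduces (\ref{x18}) verbatim; the converse implication is obtained by running the same substitutions backwards.

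The computation itself is routine once the preparatory material is in place. The only subtle point I would expect to require care is the positivity bookkeeping extracting $\pi^0$ and $\pi^1$ from the quadratic mass-shell constraint on $\mathcal{C}$; once the expression (\ref{y24}) is legitimized on $\operatorname{supp}\textbf{f}$, the remainder of the argument reduces to bookkeeping of coefficients and a change of notation.
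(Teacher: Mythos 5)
Your proposal is correct and follows essentially the same route as the paper: reduce $\widetilde{X}_{11}=0$ to $\widetilde{R}_{11}=\widetilde{T}_{11}$ using $\widetilde{g}_{11}{}_{|\mathcal{C}}=0$, substitute the expression (\ref{y22}) for $\widetilde{R}_{11}$ from the appendix, and rewrite $\widetilde{T}_{11}$ via the mass-shell elimination of $\pi^0$ (with the positivity discussion justifying the root (\ref{y24})), then multiply by $4$ and pass to the notation (\ref{y17}). No gaps; this is exactly the argument the paper carries out in the paragraphs preceding the proposition.
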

\subsection{The momentum constraints $\widetilde{X}_{1A}=0$ on $\mathcal{C}$}
 The Einstein equations
$\widetilde{X}_{1A}=0$ on $\mathcal{C}$ read
$\widetilde{R}_{1A}=\widetilde{T}_{1A}$. We are thus interested here
in the expression of $\widetilde{R}_{1A}$ resumed in (\ref{y23}) of
appendix \ref{A3} and the expression of $\widetilde{T}_{1A}$, which
in its non-explicit form is
\begin{equation*}
    \widetilde{T}_{1A}=\int_{\mathbb{R}^n}\textbf{f}\frac{(\widetilde{g}_{01})^2\widetilde{g}_{AB}\pi^0\pi^B}
    {\pi^0+\pi^1}\sqrt{|\widetilde{\gamma}|}\;d\pi .
\end{equation*}
 Combining this latter expression with the one of $\widetilde{R}_{1A}$ (\ref{y23}),
 using the expression of $\pi^0 $ (\ref{y24}) and
notations (\ref{y17}), the momentum
 constraints $\widetilde{X}_{1A}=0,\;A=2,...,n$ reduce on $\mathcal{C}$
 to partial differential relations in terms of the
 Cauchy data for $
\theta,\;\Theta=(\Theta_{AB}),\;\textbf{f},\;\psi_{1i},\;i=1,...,n$
for $(H_{\overline{g}},H_\rho)$.
 \begin{proposition}\label{p3}The hypotheses are
those of proposition \ref{p2}. Then the momentum constraints
$\widetilde{X}_{1A}=0$ on $\mathcal{C}$ are satisfied if and only if
the Cauchy data $
\theta,\;\Theta=(\Theta_{AB}),\;\textbf{f},\;\psi_{1i},\;i=1,...,n$
for $(H_{\overline{g}},H_\rho)$, agree with the partial differential
relations
\begin{equation*}
    \partial_1\psi_{1A}+(2^{-1}\Theta^{CB}\partial_1\Theta_{CB}-
    \theta^{-1}\partial_1\theta)\psi_{1A}+
\end{equation*}
\begin{equation*}
    \frac{1}{2}(\theta^{-1}\psi_{11}-
   \theta^{-1}\partial_1\theta- \frac{1}{2}\Theta^{CB}
\partial_1\Theta_{CB})\partial_A\theta
- \frac{\theta}{2}\partial_C(\Theta^{CB}
\partial_1\Theta_{AB})+
\end{equation*}
\begin{equation*}
 \frac{\theta}{2}\partial_A(\Theta^{CB}
\partial_1\Theta_{CB})   - \frac{\theta}{2}\partial_A\psi_{11}
    -          \frac{\theta}{4}(\Theta^{EF}\partial_C
          \Theta_{EF})(\Theta^{CB}\partial_1
          \Theta_{BA})+
\end{equation*}
\begin{equation*}
      \frac{\theta}{4}\Theta^{DE}\Theta^{CB}(\partial_1 \Theta_{DB})
       (\partial_A\Theta_{EC}+\partial_C\Theta_{EA}-
  \partial_E\Theta_{AC})
\end{equation*}
\begin{equation}\label{x19}
    =2 \int_{\mathbb{R}^n}
 \frac{\textbf{f}\;(\theta)^3\Theta_{AD}\left(-\pi^1+\sqrt{(\pi^1)^2-\theta^{-1}(\textbf{m}^2+
 \Theta_{CB}\pi^C\pi^B})\right)\pi^D\sqrt{|\Theta|}}{\sqrt{(\pi^1)^2-\theta^{-1}(\textbf{m}^2+
 \Theta_{CB}\pi^C\pi^B})}
 d\pi.
\end{equation}
\end{proposition}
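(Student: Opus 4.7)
The plan is to follow the same scheme as in Proposition \ref{p2} (the Hamiltonian constraint) but with the component pair $(1,A)$ in place of $(1,1)$. First I would note that in the null-adapted coordinates $(y^\mu)$, the form (\ref{c4}) of the trace metric on $\mathcal{C}$ gives $\widetilde{g}_{1A}=0$, so that the Einstein-tensor expression
\begin{equation*}
\widetilde{X}_{1A}=\widetilde{R}_{1A}-\tfrac{1}{2}\widetilde{R}\,\widetilde{g}_{1A}-\widetilde{T}_{1A}
\end{equation*}
reduces on $\mathcal{C}$ to $\widetilde{R}_{1A}=\widetilde{T}_{1A}$. The proof thus amounts to expressing each of these two terms explicitly in terms of the Cauchy data $(\theta,\Theta,\mathbf{f},\psi_{1i})$ and identifying the resulting relation with (\ref{x19}).

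For the geometric side, I would import the explicit formula for $\widetilde{R}_{1A}$ given by (\ref{y23}) of Appendix \ref{A3}, which has already been derived from the Christoffel symbols on $\mathcal{C}$ computed in Appendix \ref{A2}. One then substitutes
\begin{equation*}
\widetilde{g}_{01}{}_{|\mathcal{C}}=\theta,\quad \widetilde{g}_{AB}{}_{|\mathcal{C}}=\Theta_{AB},\quad \partial_0\widetilde{g}_{\mu\nu}{}_{|\mathcal{C}}=\psi_{\mu\nu},
\end{equation*}
uses $(\widetilde{g}^{AB})=(\widetilde{g}_{AB})^{-1}$ together with (\ref{y4}), and collects the transversal-derivative terms. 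This yields the entire left-hand side of (\ref{x19}), including the $\partial_1\psi_{1A}$ principal part together with the trace and Christoffel-symbol-type corrections.

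For the matter side, starting from (\ref{e4}) in the coordinates $(y^\mu)$ and using $\widetilde{p}_1=\widetilde{g}_{01}\pi^0=\theta\pi^0$, $\widetilde{p}_A=\widetilde{g}_{AB}\pi^B=\Theta_{AB}\pi^B$, and $\sqrt{|\widetilde{g}|}=|\theta|\sqrt{|\Theta|}$ (all valid on $\mathcal{C}$ thanks to (\ref{c4})), one obtains the preliminary expression
\begin{equation*}
\widetilde{T}_{1A}=\int_{\mathbb{R}^n}\mathbf{f}\,\frac{(\widetilde{g}_{01})^2\widetilde{g}_{AB}\pi^0\pi^B}{\pi^0+\pi^1}\sqrt{|\widetilde{\gamma}|}\,d\pi,
\end{equation*}
recorded just before the proposition. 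The mass-shell relation (\ref{y19}), now viewed as a quadratic in $\pi^0$, gives (\ref{y24}), and consequently $\pi^0+\pi^1=\sqrt{(\pi^1)^2-\theta^{-1}(\mathbf{m}^2+\Theta_{CB}\pi^C\pi^B)}$. Inserting these into the integral produces exactly the right-hand side of (\ref{x19}) (the factor $\theta^3$ arising from $(\widetilde{g}_{01})^2\cdot\sqrt{|\widetilde{g}|}$ after grouping $|\theta|$ with the remaining $\theta^2$).

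The \textbf{if and only if} follows because both manipulations are equivalences: the $\widetilde{g}_{1A}=0$ reduction is an identity on $\mathcal{C}$, and the substitutions only rewrite $\widetilde{R}_{1A}$, $\widetilde{T}_{1A}$ in the prescribed notation. I expect the main obstacle to be purely bookkeeping: carefully assembling the many terms of $\widetilde{R}_{1A}$ coming from $\widetilde{\Gamma}^\lambda_{1A}$ and $\widetilde{\Gamma}^\lambda_{AB}$ on $\mathcal{C}$, in particular tracking the mixed term
\begin{equation*}
\tfrac{\theta}{4}\Theta^{DE}\Theta^{CB}(\partial_1\Theta_{DB})(\partial_A\Theta_{EC}+\partial_C\Theta_{EA}-\partial_E\Theta_{AC})
\end{equation*}
and the tangential divergences $\partial_C(\Theta^{CB}\partial_1\Theta_{AB})$, $\partial_A(\Theta^{CB}\partial_1\Theta_{CB})$, which must be matched with the contractions appearing in (\ref{y23}). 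The matter integral, by contrast, is essentially a direct substitution once $\pi^0$ has been eliminated by the mass-shell relation.
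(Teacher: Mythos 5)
Your proposal matches the paper's own derivation: the paper likewise reduces $\widetilde{X}_{1A}=0$ on $\mathcal{C}$ to $\widetilde{R}_{1A}=\widetilde{T}_{1A}$ (using the form (\ref{c4}) of the trace metric), imports the expression (\ref{y23}) of $\widetilde{R}_{1A}$ from Appendix \ref{A3}, and eliminates $\pi^0$ in $\widetilde{T}_{1A}$ via the mass-shell relation (\ref{y24}) before rewriting everything in the notation (\ref{y17}). The only detail worth flagging is that (\ref{x19}) is the relation $\widetilde{R}_{1A}=\widetilde{T}_{1A}$ multiplied through by $2\theta$ (which is why the principal term is $\partial_1\psi_{1A}$ with unit coefficient and the matter integral carries $\theta^3$ rather than $\theta^2$), an invertible rescaling that does not affect the equivalence.
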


\subsection{The momentum constraint $\widetilde{X}_{01}=0$}
The momentum constraint $\widetilde{X}_{01}=0$ is equivalent to
\begin{equation}\label{y26}
\widetilde{g}^{AB}\widetilde{R}_{AB}=\widetilde{g}^{01}(\widetilde{R}_{11}-2\widetilde{T}_{01}).
\end{equation}
 According to the expression of $\widetilde{R}_{AB}$
(\ref{y32}) in appendix \ref{A3}, one has:
\begin{equation*}
\widetilde{g}^{AB}\widetilde{R}_{AB}=\widetilde{g}^{01}\partial_1
(\widetilde{g}^{AB}\partial_0\widetilde{g}_{AB})+\frac{1}{2}\widetilde{g}^{01}
(\widetilde{g}^{AB}\partial_1\widetilde{g}_{AB}-\widetilde{g}^{01}\partial_0\widetilde{g}_{11})
(\widetilde{g}^{AB}\partial_0\widetilde{g}_{AB})+
\end{equation*}
\begin{equation*}
   +\frac{1}{2}\widetilde{g}^{01}(\widetilde{g}^{01}\partial_0\widetilde{g}_{11}
   -\frac{1}{2}\widetilde{g}^{CD}\partial_1\widetilde{g}_{CD}
   )
   (\widetilde{g}^{AB}\partial_1\widetilde{g}_{AB})+\widetilde{g}^{01}
   \widetilde{g}^{AB}\widetilde{\Gamma}^D_{AB}\partial_0\widetilde{g}_{1D}-
\end{equation*}
\begin{equation*}
   \widetilde{g}^{01}\widetilde{g}^{AB}\partial^2_{0A}\widetilde{g}_{1B}-
    \frac{1}{2}\widetilde{g}^{01}\widetilde{g}^{AB}\partial^2_{11}\widetilde{g}_{AB}-
    \widetilde{g}^{AB}\partial_C(\widetilde{\Gamma}^C_{AB})+
    \frac{1}{2}\widetilde{g}^{AB}\underset{(\gamma,\delta)\in
   \{(0,1);(1,0);(A,B)\}}{\underbrace{\partial_B[\widetilde{g}^{\gamma\delta}\partial_A
   \widetilde{g}_{\gamma\delta
   }}]}
\end{equation*}
\begin{equation*}
   + \frac{1}{2}(\widetilde{g}^{01})^2\widetilde{g}^{AB}
   (\partial_0\widetilde{g}_{1B}-\partial_B\widetilde{g}_{01})\partial_0\widetilde{g}_{1A}+
   \frac{1}{2}\widetilde{g}^{CD}\widetilde{g}^{AB}\partial_1\widetilde{g}_{CA}\partial_1\widetilde{g}_{DB}+
\end{equation*}
\begin{equation}\label{y25}
   \widetilde{g}^{AB}\widetilde{\Gamma}^D_{AC}\widetilde{\Gamma}^C_{DB}-
   \widetilde{g}^{AB}(\widetilde{g}^{01}\partial_C\widetilde{g}_{01}+\widetilde{\Gamma}^D_{DC})
   \widetilde{\Gamma}^C_{AB}.
\end{equation}
On the other hand $\widetilde{T}_{01}$ is given by:
\begin{equation}\label{y27}
    \widetilde{T}_{10}=\int_{\mathbb{R}^n}\textbf{f}\frac{|\widetilde{g}_{01}|^3\pi^0\pi^1}
    {\pi^0+\pi^1}\sqrt{|\widetilde{\gamma}|}\;d\pi .
\end{equation}Concerning the term $\widetilde{R}_{11}$ in the
right-hand side of equation (\ref{y26}), its expression is given in
(\ref{y22}). We remark however that in the scheme of resolution of
the constraints, the heavy term $\widetilde{R}_{11}$ is replaced  by
the expression of $\widetilde{T}_{11}$ as soon as the first
constraint is satisfied. Now setting $\chi=\Theta^{AB}\psi_{AB}$,
combining the relations (\ref{y25}), (\ref{y22}), (\ref{y27}), using
the expression of $\pi^0 $ (\ref{y24}), the Christoffel symbols
\begin{equation}\label{x33}
    \widetilde{\Gamma}_{AB}^C=\frac{1}{2}\Theta^{CD}(\partial_A\Theta_{BD}+\partial_{B}\Theta_{AD}-
\partial_D\Theta_{AB}),\;A,B,C,D=2,...,n,
\end{equation}
notations of (\ref{y17}), the momentum
 constraint $\widetilde{X}_{10}=0$ is a partial differential relation on $\mathcal{C}$
  in terms of the
 Cauchy data for $
\theta,\;\Theta=(\Theta_{AB}),\;\textbf{f},\;\psi_{1i},\;i=1,...,n,\;\chi=\Theta^{AB}\psi_{AB}$
for $(H_{\overline{g}},H_\rho)$.
 \begin{proposition}\label{p4}The hypotheses
are those of proposition \ref{p2}. Then the momentum constraint
$\widetilde{X}_{10}=0$ on $\mathcal{C}$ is satisfied if and only if
the Cauchy data $ \theta,\;\Theta=(\Theta_{AB}),\;\textbf{f}$,
$\psi_{1i},\;i=1,...,n,\chi=\Theta^{AB}\psi_{AB}$ for
$(H_{\overline{g}},H_\rho)$, satisfy the partial differential
relation
\begin{equation*}
\partial_1\chi
+2^{-1}(\Theta^{AB}\partial_1\Theta_{AB}-\theta^{-1}\psi_{11})\chi
 +\frac{1}{2}(\theta^{-1}\psi_{11}
   -\frac{1}{2}\Theta^{CD}\partial_1\Theta_{CD}
   )
   (\Theta^{AB}\partial_1\Theta_{AB})+
\end{equation*}
\begin{equation*}
   \Theta^{AB}(\widetilde{\Gamma}^D_{AB}\psi_{1D}-
   \partial_{A}\psi_{1B}-
    \frac{1}{2}\partial^2_{11}\Theta_{AB}-
\theta   \partial_C(\widetilde{\Gamma}^C_{AB})+ \theta
\partial_B[\theta^{-1}\partial_A
   \theta
   ])+
\end{equation*}
\begin{equation*}
   \frac{\Theta^{AB}}{2}(
    \theta\partial_B[\Theta^{CD}\partial_A
   \Theta_{CD}]+
\theta^{-1}(\psi_{1B}-\partial_B\theta)\psi_{1A}+
   \theta\Theta^{CD}\partial_1\Theta_{CA}\partial_1\Theta_{DB})+
\end{equation*}
\begin{equation*}
\theta
\Theta^{AB}(\widetilde{\Gamma}^D_{AC}\widetilde{\Gamma}^C_{DB}-
      (\theta^{-1}\partial_C\theta+\widetilde{\Gamma}^D_{DC})
   \widetilde{\Gamma}^C_{AB})=\widetilde{R}_{11}-
\end{equation*}
\begin{equation}\label{x20}
   2\int_{\mathbb{R}^n}\textbf{f}\frac{|\theta|^3
    \left(\sqrt{(\pi^1)^2-\theta^{-1}(\textbf{m}^2+
 \Theta_{AB}\pi^A\pi^B})-\pi^1\right)\pi^1}
    {\sqrt{(\pi^1)^2-\theta^{-1}(\textbf{m}^2+
 \Theta_{AB}\pi^A\pi^B)}}\sqrt{|\Theta|}\;d\pi.
\end{equation}
\end{proposition}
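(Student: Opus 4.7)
The plan is to start from the definition $\widetilde{X}_{01}\equiv\widetilde{G}_{01}-\widetilde{T}_{01}$ and exploit the algebraic structure of the trace of the metric on $\mathcal{C}$ to reduce $\widetilde{X}_{01}=0$ to the scalar relation (\ref{y26}); this is the algebraic preliminary step that isolates the second outgoing derivative $\partial^2_{00}\widetilde{g}_{11}$ sitting inside $\widetilde{R}_{01}$ and removes it. Concretely, the scalar curvature on $\mathcal{C}$ reads $R=2\widetilde{g}^{01}\widetilde{R}_{01}-\widetilde{g}^{01}\widetilde{R}_{11}+\widetilde{g}^{AB}\widetilde{R}_{AB}$ thanks to (\ref{y4}) and $\widetilde{g}^{00}=0$; combined with $\widetilde{g}_{01}\widetilde{g}^{01}=1$ this forces the cancellation of the $\widetilde{R}_{01}$ contribution in $\widetilde{X}_{01}$, and after multiplication by $2\widetilde{g}^{01}$ one lands on (\ref{y26}).

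With this reduction available, the next step is to substitute expression (\ref{y25}) for $\widetilde{g}^{AB}\widetilde{R}_{AB}$ and translate every $\partial_0$ applied to a metric component, restricted to $\mathcal{C}$, into the $\psi$-data via (\ref{y17}). The key identification is $[\widetilde{g}^{AB}\partial_0\widetilde{g}_{AB}]=\Theta^{AB}\psi_{AB}=\chi$, so that the leading term $\widetilde{g}^{01}\partial_1(\widetilde{g}^{AB}\partial_0\widetilde{g}_{AB})$ becomes $\theta^{-1}\partial_1\chi$; multiplying the whole relation by $\theta$ at the end will place $\partial_1\chi$ as the principal derivative, which is the transport character asserted by the proposition. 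The remaining $\partial_0$-derivatives $\partial_0\widetilde{g}_{11}$, $\partial_0\widetilde{g}_{1B}$ and $\partial^2_{0A}\widetilde{g}_{1B}$ become $\psi_{11}$, $\psi_{1B}$ and $\partial_A\psi_{1B}$ respectively, while the Christoffel symbols $\widetilde{\Gamma}^C_{AB}$ on $\mathcal{C}$ depend only on the pulled-back metric $\Theta$ and are given by (\ref{x33}). The three-term bracketed sum with $(\gamma,\delta)\in\{(0,1),(1,0),(C,D)\}$ collapses, after differentiation in $y^B$, into a combination of $\theta^{-1}\partial_A\theta$ and $\Theta^{CD}\partial_A\Theta_{CD}$.

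For the matter sector the strategy is to reduce $\widetilde{T}_{01}$ of (\ref{y27}) via the mass-shell formula (\ref{y19}) and the explicit $\pi^0$ in (\ref{y24}): $\pi^0+\pi^1$ equals the square root appearing in (\ref{y24}), and $\pi^0\pi^1$ equals that square root multiplied by $\pi^1$ minus $(\pi^1)^2$, so that the quotient $\pi^0\pi^1/(\pi^0+\pi^1)$ becomes $\pi^1(\sqrt{(\pi^1)^2-\theta^{-1}(\textbf{m}^2+\Theta_{AB}\pi^A\pi^B)}-\pi^1)/\sqrt{(\pi^1)^2-\theta^{-1}(\textbf{m}^2+\Theta_{AB}\pi^A\pi^B)}$. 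Multiplying by $|\widetilde{g}_{01}|^3=|\theta|^3$ and $\sqrt{|\Theta|}$, and then by the factor $-2\widetilde{g}^{01}$ coming from (\ref{y26}), reproduces the integral on the right-hand side of (\ref{x20}) with the correct overall sign. The term $\widetilde{R}_{11}$ is intentionally retained in the equation, as noted in the paragraph preceding the proposition.

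The main obstacle is bookkeeping: expression (\ref{y25}) carries roughly a dozen terms, several of them quadratic in $\partial_1\Theta$ and in the Christoffel symbols, and the coefficients of $\psi_{11}$, of $\chi$, and of each $\psi_{1D}$ in the final equation have to match (\ref{x20}) line by line, including the $\theta^{-1}$ factors descending from $\widetilde{g}^{01}$. In particular, the coefficient $2^{-1}(\Theta^{AB}\partial_1\Theta_{AB}-\theta^{-1}\psi_{11})$ of $\chi$ arises from a delicate combination of the second and third lines of (\ref{y25}) after substituting $\widetilde{g}^{01}\partial_0\widetilde{g}_{11}=\theta^{-1}\psi_{11}$ and $\widetilde{g}^{AB}\partial_1\widetilde{g}_{AB}=\Theta^{AB}\partial_1\Theta_{AB}$. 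Once this collection is carried out without arithmetic slippage, division by $\widetilde{g}^{01}$ (equivalently, multiplication by $\theta$) at the end yields precisely (\ref{x20}), and the equivalence asserted by the proposition follows from the equivalence $\widetilde{X}_{01}=0\Leftrightarrow(\ref{y26})$ established in the first step.
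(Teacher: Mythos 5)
Your proposal is correct and follows essentially the same route as the paper: reduce $\widetilde{X}_{01}=0$ to the scalar relation (\ref{y26}), then substitute the computed expression (\ref{y25}) for $\widetilde{g}^{AB}\widetilde{R}_{AB}$, the matter integral (\ref{y27}) simplified via $\pi^0+\pi^1=\sqrt{(\pi^1)^2-\theta^{-1}(\textbf{m}^2+\Theta_{AB}\pi^A\pi^B)}$ from (\ref{y24}), and the notations (\ref{y17}), multiplying through by $\theta$ at the end. Your explicit verification of (\ref{y26}) via $R=2\widetilde{g}^{01}\widetilde{R}_{01}-\widetilde{g}^{01}\widetilde{R}_{11}+\widetilde{g}^{AB}\widetilde{R}_{AB}$ is a welcome addition, since the paper asserts that equivalence without detail.
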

\subsection{The constraints $\widetilde{X}_{AB}-\frac{\widetilde{g}^{CD}
\widetilde{X}_{CD}}{n-1}\widetilde{g}_{AB}=0$} For the constraints
$\widetilde{X}_{AB}-\frac{\widetilde{g}^{CD}\widetilde{X}_{CD}}{n-1}\widetilde{g}_{AB}=0$,
they are equivalent to
\begin{equation}\label{y28}
\widetilde{R}_{AB}-\widetilde{T}_{AB}-\frac{\widetilde{g}^{CD}
(\widetilde{R}_{CD}-\widetilde{T}_{CD})}{n-1}\widetilde{g}_{AB}=0.
\end{equation}
 One needs the expression of $\widetilde{R}_{AB}$ (\ref{y32}) in
appendix \ref{A3}, and the one of $\widetilde{T}_{AB}$ given by
\begin{equation}\label{y29}
\widetilde{T}_{AB}=-\int_{\mathbb{R}^n}\textbf{f}\frac{|\widetilde{g}_{01}
|\widetilde{g}_{AC}\widetilde{g}_{BD}\pi^C\pi^D}{\pi^0+\pi^1}\sqrt{|\widetilde{\gamma}|}\;d\pi.
\end{equation}We signal that in the hierarchy of resolution of
constraints, the quantity $\widetilde{g}^{CD}\widetilde{R}_{CD}$ in
the right-hand side of (\ref{y28}) is substituted by
$\widetilde{g}^{01}(\widetilde{T}_{11}-\widetilde{T}_{01})$ as soon
as the previous constraints are satisfied. All that precedes implies
that the constraints
$\widetilde{X}_{AB}-\frac{\widetilde{g}^{CD}\widetilde{X}_{CD}}{n-1}\widetilde{g}_{AB}=0$
are partial differential in terms of the Cauchy data $
\theta,\;\Theta=(\Theta_{AB}),\;\textbf{f}$,
$\psi_{1i},\;i=1,...,n,\psi_{AB},\; A,B=2,...,n$ for
$(H_{\overline{g}},H_\rho)$.
 \begin{proposition}\label{p5}The hypotheses
are those of proposition \ref{p2}. Then the constraints
$\widetilde{X}_{AB}-\frac{\widetilde{g}^{CD}\widetilde{X}_{CD}}{n-1}\widetilde{g}_{AB}=0$
on $\mathcal{C}$ are satisfied if and only if the Cauchy data $
\theta,\;\Theta=(\Theta_{AB}),\;\textbf{f}$,
$\psi_{1i},\;i=1,...,n,\psi_{AB},\; A,B=2,...,n$ for
$(H_{\overline{g}},H_\rho)$, satisfy the partial differential
relations
\begin{equation*}
    \partial_1\psi_{AB}-
\end{equation*}
\begin{eqnarray*}
    \frac{1}{2}(
    \frac{1}{2}(\theta^{-1}\psi_{11}-\Theta^{EF}\partial_1\Theta_{EF})\delta_A^C\delta_B^D+
    \Theta^{ED}\partial_1\Theta_{EB}\delta_A^C+\Theta^{ED}\partial_1\Theta_{DA}\delta_B^C
    )\psi_{CD}+
\end{eqnarray*}
\begin{equation*}
    \frac{1}{2}(\frac{1}{2}\Theta^{CE}\psi_{CE}
+(\theta^{-1}\psi_{11}
   -\frac{1}{2}\Theta^{CD}\partial_1\Theta_{CD}
   ))\partial_1\Theta_{AB}+
\end{equation*}
\begin{equation*}
   \widetilde{\Gamma}^D_{AB}\psi_{1D}-
   \partial_{A}\psi_{1B}-\partial_{B}\psi_{1A}-
    \frac{1}{2}\partial^2_{11}\Theta_{AB}-
\theta   \partial_C(\widetilde{\Gamma}^C_{AB})+ \theta
\partial_B(\theta^{-1}\partial_A
   \theta
   )+
\end{equation*}
\begin{equation*}
   \frac{1}{2}(
    \theta\partial_B(\Theta^{CD}\partial_A
   \Theta_{CD})+
\theta^{-1}(\psi_{1B}-\partial_B\theta)\psi_{1A}+
   \Theta^{CD}\partial_1\Theta_{CA}\partial_1\Theta_{DB})+
\end{equation*}
\begin{equation*}
  \widetilde{\Gamma}^D_{AC}\widetilde{\Gamma}^C_{DB}-
      (\theta^{-1}\partial_C\theta+\widetilde{\Gamma}^D_{DC})
   \widetilde{\Gamma}^C_{AB}=  \frac{\Theta^{CD}
(\widetilde{R}_{CD}-\widetilde{T}_{CB})}{n-1}\Theta_{AB}-
\end{equation*}
\begin{equation}\label{x21}
    \int_{\mathbb{R}^n}\textbf{f}\frac{|\theta
|\Theta_{AC}\Theta_{BD}\pi^C\pi^D}{\sqrt{(\pi^1)^2-\theta^{-1}(\textbf{m}^2+
 \Theta_{CD}\pi^C\pi^D)}}\sqrt{|\Theta|}\;d\pi.
\end{equation}
\end{proposition}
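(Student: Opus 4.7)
The plan is to proceed in parallel with the proofs of Propositions \ref{p2}, \ref{p3}, \ref{p4}, reducing the tensorial constraint (\ref{y28}) to a first-order partial differential system for the Cauchy data by substituting the explicit formulas for $\widetilde{R}_{AB}$ (from the expression labeled (\ref{y32}) in appendix \ref{A3}) and for $\widetilde{T}_{AB}$ given in (\ref{y29}). First I would rewrite $\widetilde{X}_{AB} - \frac{\widetilde{g}^{CD}\widetilde{X}_{CD}}{n-1}\widetilde{g}_{AB} = 0$ in the purely Ricci/stress-energy form (\ref{y28}), then isolate the outgoing transverse derivative term $\partial^2_{01}\widetilde{g}_{AB}$ inside $\widetilde{R}_{AB}$; this term rewrites as $\partial_1 \psi_{AB}$ using the notation $\psi_{AB} = \partial_0 \widetilde{g}_{AB}\big|_{\mathcal{C}}$ from (\ref{y17}), and it will provide the principal $\partial_1 \psi_{AB}$ appearing on the left-hand side of (\ref{x21}).

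Next I would translate each remaining piece of $\widetilde{R}_{AB}$ into the data $(\theta,\Theta_{AB},\psi_{\mu\nu})$: wherever $\widetilde{g}_{01}, \widetilde{g}_{AB}, \partial_0 \widetilde{g}_{01}, \partial_0 \widetilde{g}_{AB}$ appear in restriction to $\mathcal{C}$, I would replace them respectively by $\theta, \Theta_{AB}, \psi_{01}, \psi_{AB}$, using (\ref{y4}) to invert $\widetilde{g}^{01}$ and $\widetilde{g}^{AB}$ as $\theta^{-1}$ and $\Theta^{AB}$. The Christoffel symbols intrinsic to the sphere appear as in (\ref{x33}). At this point I would identify the various terms on the left-hand side of (\ref{x21}): the coefficient of $\psi_{CD}$ comes from expanding the mixed $\partial_0 \widetilde{g}_{AB}\partial_0 \widetilde{g}_{11}$ and $\psi_{1A}\psi_{1B}$ quadratic contributions in $\widetilde{R}_{AB}$ after elimination of $\psi_{01}$ via the gauge relations of appendix \ref{A2}, and the term proportional to $\partial_1 \Theta_{AB}$ arises from contracting with $\frac{\widetilde{g}^{CD}\widetilde{R}_{CD}}{n-1}\widetilde{g}_{AB}$ along with the Raychaudhuri expansion already computed in (\ref{y22}).

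For the matter side, I would plug the mass-shell resolution of $\pi^0$ (\ref{y24}) into the integrand of (\ref{y29}); since the restriction of $g$ on $\mathcal{C}$ gives $\sqrt{|g|} = |\theta|\sqrt{|\Theta|}$, and since the denominator $\pi^0 + \pi^1$ simplifies against $-2\widetilde{g}_{01}\pi^1\pi^0$ from the mass-shell identity (\ref{y19}) exactly as in the derivation of (\ref{y21}), the integral becomes the one appearing on the right-hand side of (\ref{x21}). The term $\frac{\Theta^{CD}(\widetilde{R}_{CD}-\widetilde{T}_{CD})}{n-1}\Theta_{AB}$ on the right-hand side is kept unresolved in line with the hierarchy remark preceding the proposition: at the moment we solve (\ref{x21}), the previous constraints will already have been solved so that $\widetilde{g}^{CD}\widetilde{R}_{CD}$ may eventually be replaced by $\widetilde{g}^{01}(\widetilde{T}_{11}-\widetilde{T}_{01})$.

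The main obstacle I expect is purely bookkeeping: the expression (\ref{y32}) for $\widetilde{R}_{AB}$ contains many quadratic Christoffel and mixed-derivative terms, and isolating the principal outgoing derivative while recognizing the traceless structure of $Z_{AB}$ noted in the remark after Theorem \ref{th1} requires very careful grouping. In particular, one has to verify that the $\partial^2_{11}\widetilde{g}_{11}$ harmful term flagged in (\ref{y14})--(\ref{x12}) cancels exactly through the subtraction of the trace, which is what motivated the choice of the traceless combination in the first place; once this cancellation is confirmed, the remaining expression is first order in the unknown $\psi_{AB}$ and the equivalence claimed by the proposition follows, with the converse direction being automatic since each step above is reversible algebraically.
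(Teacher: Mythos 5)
Your approach coincides with the paper's own: Proposition~\ref{p5} is obtained there precisely by substituting the appendix expression (\ref{y32}) for $\widetilde{R}_{AB}$ and the Vlasov integral (\ref{y29}) with $\pi^0+\pi^1$ eliminated via (\ref{y24}), translating everything into the notation (\ref{y17}) (with $\sqrt{|g|}=|\theta|\sqrt{|\Theta|}$ on $\mathcal{C}$), and leaving the trace term $\frac{\Theta^{CD}(\widetilde{R}_{CD}-\widetilde{T}_{CD})}{n-1}\Theta_{AB}$ unresolved on the right for the hierarchical substitution, the equivalence being a purely algebraic identification. One small correction: the harmful term eliminated by the traceless combination is the second-order \emph{outgoing} derivative $\partial^2_{00}\widetilde{g}_{11}$ flagged in (\ref{y13}), which enters through $\widetilde{R}_{01}$ in the scalar-curvature part of $\widetilde{G}_{AB}$, not $\partial^2_{11}\widetilde{g}_{11}$ --- the interior derivative $\partial^2_{11}\Theta_{AB}$ legitimately survives in (\ref{x21}) since it is determined by the data on the cone.
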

\begin{remark}
The constraints of the type in proposition \ref{p5} appear in the
case of the "Double null foliation gauge" \cite{a},\cite{h}.
\end{remark}
\subsection{The constraint $\widetilde{
X}_{00}-\widetilde{g}_{01}\frac{\widetilde{g}^{CD}\widetilde{X}_{CD}}{n-1}+
   \widetilde{g}_{01}\frac{\partial (\widetilde{\Gamma}^0+\widetilde{\Gamma}^1)}{\partial
   y^0}   =0$}
The last constraint in our scheme corresponds to
\begin{equation*}
    \widetilde{
X}_{00}-\widetilde{g}_{01}\frac{\widetilde{g}^{CD}\widetilde{X}_{CD}}{n-1}+
   \widetilde{g}_{01}\frac{\partial (\widetilde{\Gamma}^0+\widetilde{\Gamma}^1)}{\partial
   y^0}   =0,\;A,B,C,D=2,...,n;
\end{equation*}
 it is equivalent to
   \begin{equation}\label{y31}
\widetilde{R}_{00}-\widetilde{T}_{00}-
   \widetilde{g}_{01}\frac{\widetilde{g}^{AB}(\widetilde{R}_{AB}-\widetilde{T}_{AB})}{n-1}+
   \widetilde{g}_{01}\frac{\partial (\widetilde{\Gamma}^0+\widetilde{\Gamma}^1)}{\partial
   y^0}   =0.
   \end{equation}
    One needs at this level the expression of $\widetilde{R}_{00}$ (\ref{y36}) in appendix
    \ref{A3}, the one of
   $\frac{\partial (\widetilde{\Gamma}^0+\widetilde{\Gamma}^1)}{\partial
   y^0} $ (\ref{y35}) in  appendix \ref{A4}, and
   $\widetilde{T}_{00}$ which resumes to
   \begin{equation}\label{y30}
        \widetilde{T}_{00}=\int_{\mathbb{R}^n}\textbf{f}|\widetilde{g}_{01}|^3(\pi^0+\pi^1)
        \sqrt{|\widetilde{\gamma}|}\;d\pi.
   \end{equation}
\begin{proposition}\label{p6}
The hypotheses are those of proposition \ref{p2}. Then the
constraint $\widetilde{
X}_{00}-\widetilde{g}_{01}\frac{\widetilde{g}^{CD}\widetilde{X}_{CD}}{n-1}+
   \widetilde{g}_{01}\frac{\partial (\widetilde{\Gamma}^0+\widetilde{\Gamma}^1)}{\partial
   y^0}   =0;\;C,D=2,...,n$ on $\mathcal{C}$ is satisfied
if and only if the Cauchy data $
\theta,\;\Theta=(\Theta_{AB}),\;\textbf{f}$,
$\psi_{1\alpha},\;\alpha=0,1,...,n$, $\psi_{AB},\; A,B=2,...,n$ for
$(H_{\overline{g}},H_\rho)$, satisfy the partial differential
relation
\begin{equation*}
\partial_1\psi_{01}+(\frac{1}{4}\Theta^{AB}\partial_1\Theta_{AB}-\theta^{-1}(\partial_1\theta
        +\psi_{11}))\psi_{01}-\frac{1}{2}\Theta^{AD}\psi_{1D}\psi_{1A}+
\end{equation*}
\begin{equation*}
    \frac{\theta^{-1}}{4}\psi_{11}^2+\frac{\theta}{4}\Theta^{DB}\Theta^{CE}
    \psi_{BC}\psi_{ED}+\frac{\theta^{-1}}{4}\psi_{11}\partial_1\theta-\frac{1}{2}\Theta^{CB}\psi_{1C}(\partial_B\theta)
\end{equation*}
\begin{equation*}
  +\frac{\theta^{-1}}{2}\psi_{01}^2 + \frac{\theta}{2}\partial_A\Theta^{AC}\partial_C\theta-\frac{1}{2}\partial^2_{11}\theta+
    \frac{\theta}{2}\Theta^{AB}\partial^2_{AB}\theta+\frac{1}{4}\partial_1\theta
    \Theta^{CB}(\psi_{CB}-\partial_1\Theta_{CB})
\end{equation*}
   \begin{equation*}
 +\frac{\theta}{2}\Theta^{DB}(\partial_B\theta)\widetilde{\Gamma}^C_{CD}
        =
   \frac{\Theta^{AB}(\widetilde{R}_{AB}-\widetilde{T}_{AB})}{n-1}+
   \end{equation*}
   \begin{equation}\label{x22}
        \theta\int_{\mathbb{R}^n}\textbf{f}|\theta|^3(\sqrt{(\pi^1)^2-\theta^{-1}(\textbf{m}^2+
 \Theta_{AB}\pi^A\pi^B)})
        \sqrt{|\Theta|}\;d\pi.
   \end{equation}
\end{proposition}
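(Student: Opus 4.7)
My plan is to follow the same pattern as in the proofs of Propositions 2--5, converting the geometric form of the constraint into a PDE in the Cauchy data. I will proceed in four stages.

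First, I would verify the equivalence between the stated constraint and the simpler form (\ref{y31}). Since $\widetilde X_{\mu\nu}=\widetilde R_{\mu\nu}-\frac12 \widetilde g_{\mu\nu}\widetilde R-\widetilde T_{\mu\nu}$, and since on $\mathcal{C}$ one has $\widetilde g_{00}=\widetilde g_{01}$ together with $\widetilde g^{00}=0$, $\widetilde g^{11}=-\widetilde g^{01}$, a short algebraic manipulation of the two contributions $\widetilde X_{00}$ and $\widetilde g_{01}\widetilde g^{CD}\widetilde X_{CD}/(n-1)$ makes the scalar-curvature terms cancel, yielding exactly (\ref{y31}). This step is purely algebraic and uses only the identities (\ref{y4}).

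Second, I would substitute into (\ref{y31}) the expression of $\widetilde R_{00}$ from (\ref{y36}) of appendix \ref{A3}, the expression of $\partial_0(\widetilde{\Gamma}^0+\widetilde{\Gamma}^1)$ from (\ref{y35}) of appendix \ref{A4}, and the explicit form of $\widetilde T_{00}$ given by (\ref{y30}). The essential point — and this is the reason for incorporating the $\partial_0(\widetilde\Gamma^0+\widetilde\Gamma^1)$ term in the first place — is that the $\partial^2_{00}\widetilde g_{11}$ piece coming from $\widetilde R_{00}$ and the $\widetilde g^{01}\widetilde g^{AB}\partial^2_{00}\widetilde g_{AB}$ piece are precisely cancelled by the combination $\widetilde g_{01}\partial_0(\widetilde\Gamma^0+\widetilde\Gamma^1)$ in accordance with (\ref{x14}), leaving on the left-hand side only the good principal term $\partial_1\psi_{01}$ coming from the derivatives of the Christoffel symbols in $\partial_0 \widetilde\Gamma^0$. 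The trace $\widetilde g^{AB}(\widetilde R_{AB}-\widetilde T_{AB})/(n-1)$ is not to be expanded: it stays symbolically on the right-hand side of (\ref{x22}) because, in the hierarchy of resolution, it is replaced at the appropriate moment by the data determined at the previous step (Proposition \ref{p5}), exactly as was done for $\widetilde R_{11}$ in Proposition \ref{p4}.

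Third, I would translate everything into the Cauchy-data notation (\ref{y17}), writing $\widetilde g_{01}{}_{|\mathcal{C}}=\theta$, $\widetilde g_{AB}{}_{|\mathcal{C}}=\Theta_{AB}$ and $\partial_0\widetilde g_{\mu\nu}{}_{|\mathcal{C}}=\psi_{\mu\nu}$, replacing inverse components of $\widetilde g$ on $\mathcal{C}$ via (\ref{y4}), and expressing the matter integral using the mass-shell resolution (\ref{y24}) of $\pi^0$ in terms of $\pi^1,\pi^A,\theta,\Theta,\mathbf{m}$. After carefully regrouping the products of Christoffel symbols, transverse derivatives $\partial_A\theta,\partial^2_{AB}\theta,\partial^2_{11}\theta$, and the bilinear terms in the $\psi_{\mu\nu}$'s, one arrives at the displayed equation (\ref{x22}), with the converse implication being a direct reading of the computation in reverse.

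The main obstacle is purely computational: it lies in the bookkeeping of the many lower-order terms in $\widetilde R_{00}$ (which involves every block of the Christoffel symbols of the cone metric) and in checking that the subtraction by $\widetilde g_{01}\widetilde g^{CD}\widetilde X_{CD}/(n-1)$ together with the $\widetilde g_{01}\partial_0(\widetilde\Gamma^0+\widetilde\Gamma^1)$ contribution indeed removes \emph{all} second-order outgoing normal derivatives — not only the obvious $\partial^2_{00}\widetilde g_{11}$ and $\partial^2_{00}\widetilde g_{AB}$ ones, but also any $\partial^2_{00}\widetilde g_{1A}$ terms that may be hidden in $\partial_0\widetilde\Gamma^0$. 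Once the cancellation is verified, the remaining identification with (\ref{x22}) is a matter of careful symbolic simplification, relying on the formula (\ref{x33}) for the Christoffel symbols of the induced sphere metric and on the already-established expressions from appendices \ref{A2}--\ref{A4}.
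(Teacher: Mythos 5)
Your proposal follows essentially the same route as the paper: reduce the constraint to the equivalent form (\ref{y31}) via the algebraic identities (\ref{y4}), then substitute the expressions (\ref{y36}) for $\widetilde{R}_{00}$, (\ref{y35}) for $\partial_0(\widetilde{\Gamma}^0+\widetilde{\Gamma}^1)$ and (\ref{y30}) for $\widetilde{T}_{00}$, use (\ref{y24}) for $\pi^0$, keep the trace term $\widetilde{g}^{AB}(\widetilde{R}_{AB}-\widetilde{T}_{AB})/(n-1)$ symbolic on the right-hand side for the hierarchical resolution, and rewrite everything in the Cauchy-data notation (\ref{y17}). Your explicit emphasis on verifying that the combination with $\widetilde{g}_{01}\,\partial_0(\widetilde{\Gamma}^0+\widetilde{\Gamma}^1)$ removes \emph{all} second-order outgoing derivatives, per (\ref{x14}), is precisely the point the paper's construction of this constraint is designed to achieve, so the proposal is correct and matches the paper's argument.
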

  \section{On the resolution of the initial data constraints }
     The cone $\mathcal{C}$ admits the equation $x^0-\sqrt{\sum_i(x^i)^2}=0$
in coordinates $(x^\mu)$ and this is viewed as the requirement
 that the coordinates $(x^\delta)$ coincide on $\mathcal{C}$ with some normal coordinates
 based at O and attached to an appropriate basis of vectors at O.
  The Christoffel symbols of the metric thus vanish at O. This
 induces that a regular metric in a neighborhood of the cone and
Minkowskian at O must satisfy the expansion
\begin{equation}\label{x15}
g=g_{\mu\nu}dx^\mu dx^\nu=
        (\eta_{\mu\nu}+O(r^2))dx^\mu dx^\nu.
 \end{equation}
  Its behavior at O (see also \cite{l} for some details) in coordinates $(y^\mu)$ is thus:
  \begin{equation*}
    \widetilde{g}_{01}=-1+O(r^2),\;\widetilde{g}_{AB}=r^2\sum_i\frac{\partial
\theta^i}{\partial y^A}
    \frac{\partial \theta^i}{\partial y^B}+O(r^4),
  \end{equation*}
 \begin{equation}\label{x16}
  \widetilde{g}^{AB}=
    \sum_s\frac{\partial y^A}{\partial x^s}\frac{\partial y^B}{\partial
    x^s}+O(cste),
 \end{equation}where the expression of $\frac{\partial y^A}{\partial x^s}$
 is given by:
\begin{equation}
    \frac{\partial y^A}{\partial x^s}=\frac{1}{y^1}\left(\frac{\partial y^A}{\partial \theta^s}-
    \frac{\partial y^A}{\partial \theta^k}\theta^k \theta^s\right),
\end{equation}
 moreover, one has the following properties:
 \begin{equation*}
    {[\partial_0 \widetilde{g}_{00}]}=O(r),\;
    {[\partial_0\widetilde{g}_{01}]}=O(r),\;
    {[\partial_0 \widetilde{g}_{0A}]}=0,\;[\partial_0 \widetilde{g}_{1A}]=O(r^2),
 \end{equation*}
 \begin{equation}\label{c10}
       {[\partial_0 \widetilde{g}_{11}]}=O(r),\;
{[\partial_0 \widetilde{g}_{AB}]}=O(r^3).
 \end{equation}
 \subsection{The free data and their many ways}
 The constraints (\ref{x18}),(\ref{x19}),(\ref{x20}),(\ref{x21}),(\ref{x22}) do not belong to a specific
 type of differential equations which can be solved using
  classical methods unless free data are well-chosen.
 Indeed, there are some free data which guarantees that
 the constraints (\ref{x18}),(\ref{x19}),(\ref{x20}),(\ref{x21}),(\ref{x22})
  can be solved
 (hierarchically) algebraically or/and as propagations equations along
 null generators of the cone $\mathcal{C}$ in terms of the constrained Cauchy
 data. Their specificity rests on how to tackle the first constraint. To use a terminology
 which appears in other contexts \cite{a} and thereby highlight the many ways of
 the free data,
  we introduce for the tangent vectors $X,Y\in TC_{\mu\nu}$ the one form $\xi$
  and the second fundamentals forms
  $\chi,\;\underline{\chi}$ of $C_{\mu\nu}:=\{y^0=\mu,\;y^1=\nu\}$, defined by
  \begin{equation}\label{x30}
\xi (X)=\frac{1}{2}g(\nabla_X ^{\partial_1},\partial_0),
  \end{equation}
  \begin{equation}\label{x29}
    \chi (X,Y)=g(\nabla^{\partial_1}_X,Y)
    ,\;\underline{\chi }(X,Y)=g(\nabla^{\partial_0}_X,Y),
  \end{equation}and correspondingly the shear tensors
  $\widehat{\chi},\;\widehat{\underline{\chi}}$:
  \begin{equation}\label{x31}
\widehat{\chi}=\chi-\frac{tr\chi}{n-1}\gamma,\;\widehat{\underline{\chi}}=\underline{\chi}-
\frac{tr\underline{\chi}}{n-1}\gamma,
  \end{equation}where $tr\chi$ and $tr\underline{\chi}$ denote
  the trace of $\chi$ and $\underline{\chi}$ respectively, with respect to
  the induced metric $\gamma$ of $g$ on $C_{\mu\nu}$. On $\mathcal{C}=\cup
  C_{o\nu}$, these geometric objects are linked to the gravitational
  Cauchy data, ie.:
  \begin{equation}
    \xi_A=\frac{1}{4}(\partial_A\widetilde{g}_{01}-\partial_0\widetilde{g}_{1A}),\;
    \chi_{AB}=\frac{1}{2}\partial_1\widetilde{g}_{AB},\;
    \underline{\chi}_{AB}=\frac{1}{2}\partial_0\widetilde{g}_{AB}.
  \end{equation}The constraints (\ref{x18}),(\ref{x19}),(\ref{x20}),(\ref{x21}),(\ref{x22})
   expressed
  then in terms of these geometric objects and other quantities identified as
   combinations or not of the Christoffel symbols of the metric
  on $\mathcal{C}$, as instance, one has:
  \begin{equation*}
    2g(\nabla_{\partial_0}^{\partial_1},\partial_1)=\partial_0\widetilde{g}_{11},\;
2g(\nabla_{\partial_0}^{\partial_0},\partial_0)=\partial_0\widetilde{g}_{01},\;
2g(\nabla_{\partial_0}^{\partial_0},\partial_1)=2\partial_0\widetilde{g}_{01}-
\widetilde{g}^{01}\partial_1\widetilde{g}_{01}.
  \end{equation*}Here we make discussions according to the affine
  parametrization condition.
\subsubsection{No initial condition on the non-affinity constant
$\kappa:=\widetilde{\Gamma}^1_{11}$}
 The assumption (A) (\ref{x3})
guarantees only that the vector fields $\frac{\partial}{\partial
y^1}$ is tangent to the null generators of the cone. According to
the notations of (\ref{y17}), one has
$\nabla_{\partial_1}^{\partial_1}=\kappa
\partial_1=\frac{1}{2\theta}(2\partial_1\theta-\psi_{11})\partial_1$.
It appears thus clearly that $\kappa$ is directly linked to the
Cauchy data $\psi_{11}$. If no condition is given on $\kappa$, then
the first constraint equation is solvable algebraically in term of
$\psi_{11}$ provided all the components of the metric
$\theta,\;\Theta_{AB}$ are given, with the divergence scalar
$tr\chi=\frac{1}{2}\Theta^{AB}\partial_1 \Theta_{AB}$ of
$\mathcal{C}$ which is nowhere vanishing. In this case, the solution
is global.
\begin{remark}\label{r1}
Given a metric
 $\widetilde{\gamma}=(\widetilde{\gamma}_{AB})$ on $\mathcal{C}$, a conformal
 class
 of metric $(e^\omega\widetilde{\gamma}_{AB})$
   satisfying the "nowhere vanishing condition for the divergence scalar"
     realizes for $\omega$ given by
    \begin{equation}
        \omega(y^i)=-\frac{1}{n-1}\int_{0}^{y_1}(\widetilde{\gamma}^{AB}
        \partial_1\widetilde{\gamma}_{AB}-\omega_0)(\lambda,y^A)d\lambda,
    \end{equation}where $\omega_0\equiv \omega_0(y^1,y^A)$ is any function s.t.
    $|\omega_0|>0,\; y^1\neq 0$.
\end{remark}
\subsubsection{$\kappa$ and $(\widetilde{\gamma}_{AB})$ as free data}
Given $\kappa $ as free data is equivalent to the given of the
Cauchy data $\psi_{11}$ as soon as $\theta <0$ is known, and
indicates the specification of an affine parameter though its
expression is not necessarily trivial. The first constraint equation
is solvable in term of a conformal factor $\omega$ s.t.
$\Theta_{AB}=e^\omega \widetilde{\gamma}_{AB}$ as in \cite{j}, where
the $\widetilde{\gamma}_{AB}$ are prescribed freely and make up a
symmetric positive definite matrix. Indeed,
\begin{equation*}
  \Theta^{CB}\partial_1
\Theta_{CB} =(n-1) \partial_1\omega+
\widetilde{\gamma}^{BC}\partial_1\widetilde{\gamma}_{BC},
\end{equation*}
\begin{equation*}
  \partial_1\left( \Theta^{CB}\partial_1
\Theta_{CB} \right) = (n-1)\partial_1(\partial_1\omega)+
\partial_1\left(\widetilde{\gamma}^{BC}\partial_1\widetilde{\gamma}_{BC}\right),
\end{equation*}
\begin{equation*}
    (\Theta^{CB}\partial_1\Theta_{BD})
    (\Theta^{DE}\partial_1\Theta_{CE}) =(n-1) \left[\partial_1\omega\right]^2+2\left[\widetilde{\gamma}^{EC}\partial_1
\widetilde{\gamma}_{EC}\right][\partial_1\omega]+
\end{equation*}
\begin{equation*}
 \widetilde{\gamma}^{BC}\widetilde{\gamma}^{DE}
(\partial_1\widetilde{\gamma}_{CE})(\partial_1\widetilde{\gamma}_{BD}).
\end{equation*}
The first constraint equation then reads:
\begin{equation*}
    \partial^2_1\omega+\left(-\kappa+
    \frac{\widetilde{\gamma}^{EC}\partial_1\widetilde{\gamma}_{EC}}{n-1}\right)\partial_1\omega
    +\frac{(\partial_1\omega)^2}{2}+\frac{\partial_1(\widetilde{\gamma}^{EC}\partial_1\widetilde{\gamma}_{EC})}{n-1}
\end{equation*}
\begin{equation*}
   - \frac{\kappa}{n-1}\widetilde{\gamma}^{BC}\partial_1\widetilde{\gamma}_{BC}+
    \frac{\widetilde{\gamma}^{BC}\widetilde{\gamma}^{DE}
    \partial_1\widetilde{\gamma}_{CE}\partial_1\widetilde{\gamma}_{BD}}{2(n-1)}
\end{equation*}
\begin{equation*}
  =  \frac{2}{1-n}\int_{\mathbb{R}^n}
 \frac{\textbf{f}\;|\theta|^3\left(\sqrt{(\pi^1)^2-\theta^{-1}(\textbf{m}^2+
 e^\omega\widetilde{\gamma}_{AB}\pi^A\pi^B})-\pi^1\right)^2
 e^{\frac{n-1}{2}\omega}\sqrt{|\widetilde{\gamma}|}}{\sqrt{(\pi^1)^2-\theta^{-1}(\textbf{m}^2+
 e^\omega\widetilde{\gamma}_{AB}\pi^A\pi^B})}
 d\pi.
\end{equation*}
In term of $\Omega:=e^{\frac{\omega}{2}}$, one has:
\begin{equation*}
    2\partial^2_1\Omega+2\left(-\kappa+
    \frac{\widetilde{\gamma}^{EC}\partial_1\widetilde{\gamma}_{EC}}{n-1}\right)\partial_1\Omega+
    \end{equation*}
\begin{equation*}
  \left(\frac{\partial_1(\widetilde{\gamma}^{EC}\partial_1\widetilde{\gamma}_{EC})}{n-1}
   - \frac{\kappa}{n-1}\widetilde{\gamma}^{BC}\partial_1\widetilde{\gamma}_{BC}+
    \frac{\widetilde{\gamma}^{BC}\widetilde{\gamma}^{DE}
    \partial_1\widetilde{\gamma}_{CE}\partial_1\widetilde{\gamma}_{BD}}{2(n-1)}\right)\Omega
\end{equation*}
\begin{equation}\label{x34}
  =  \frac{2\Omega^n}{1-n}\int_{\mathbb{R}^n}
 \frac{\textbf{f}\;|\theta|^3\left(\sqrt{(\pi^1)^2-\theta^{-1}(\textbf{m}^2+
 \Omega^2\widetilde{\gamma}_{AB}\pi^A\pi^B})-\pi^1\right)^2
 \sqrt{|\widetilde{\gamma}|}}{\sqrt{(\pi^1)^2-\theta^{-1}(\textbf{m}^2+
 \Omega^2\widetilde{\gamma}_{AB}\pi^A\pi^B})}
 d\pi.
\end{equation}One can also use the following relation to simplify some expressions above:
\begin{equation*}
\widetilde{\gamma}^{EC}\partial_1\widetilde{\gamma}_{EC}=2\partial_1\ln\sqrt{|\widetilde{\gamma}|}.
\end{equation*}
This second order differential equation has a unique solution given
Minkowskian initial values at O.
\subsubsection{$\kappa$ and components $\sigma_A^B$ of the shear w.r.t. dual bases as free data}
If the components $\sigma_A^B$ of the shear w.r.t. dual bases
$\{\frac{\partial}{\partial y^A}\},\;\{dy^A\}$ are given as free
data together with $\theta <0$ and $\kappa$, the relation
$\chi_{AB}=\frac{1}{2}\partial_1\Theta_{AB}$ and the first
constraint equation induce a system of differential equations, of
unknowns the $\Theta_{AB}$ and the divergence scalar $tr\chi$.
Indeed, one sets: $\widehat{\chi}^B_A=\sigma^B_A$, and one can
deduce:
\begin{equation}\label{x35}
  \partial_1\Theta_{AB}-2(\sigma_B^C+\frac{tr\chi}{n-1}\delta_B^C)\Theta_{AC} = 0,
\end{equation}
\begin{equation}\label{x36}
  \partial_1tr\chi-\frac{\theta^{-1}}{2}(\theta\kappa+\partial_1\theta)tr\chi + \frac{(tr\chi)^2}{n-1}+
  \sigma_A^B\sigma_B^A
  -\widetilde{T}_{11}(\theta,\Theta_{CD},\textbf{m}^2,\textbf{f})=0.
\end{equation} This system has a unique solution given
Minkowskian initial values at O.  Setting
\begin{equation*}
    V=e^{\int \frac{tr\chi}{n-1}dy^1},
\end{equation*}the equation (\ref{x36}) reads:
\begin{equation}\label{x37}
    \partial_{11}^2V-\frac{\theta^{-1}}{2}(\theta\kappa+\partial_1\theta)\partial_1
    V+\frac{(-\sigma_A^B\sigma_B^A
  +\widetilde{T}_{11}(\theta,\Theta_{CD},\textbf{m}^2,\textbf{f}))}{1-n}V=0.
\end{equation}
\begin{remark}[Shear prescribed ]
One can prescribe rather the components $\sigma_{AB}$ of the shear
with respect to the basis $\{dy^A\}$, however the system
(\ref{x35}), (\ref{x36}) seems simpler than the one obtained in this
case.
\end{remark}
\begin{remark}[On global solution in vacuum for prescribed $\kappa$]
From the analysis above, it appears that in vacuum and for
prescribed $\kappa$, one has a global solution. Indeed, in this
case, the equation (\ref{x34}) is a homogeneous linear second order
differential equation in $\Omega$, and the same property holds for
the equation (\ref{x37}) of unknown $V$ which can be solved
independently of the equation (\ref{x35}).
\end{remark}
\subsection{Constraints's solutions: the general idea}
 In this paper, we concentrate on the first case where no condition is given on $\kappa$,
  this corresponds thus to unconstrained initial metric and the solutions of
  the constraints yield the first fundamental form of the initial
  hypersurface.
 The prescribed free data comprise precisely:\\
  (\textbf{a})- $\mathcal{C}^\infty$ functions
$\widetilde{\gamma}_{AB}\equiv\widetilde{\gamma}_{AB}(y^i)$ that
make up (for $y^1\neq 0$) a symmetric positive definite matrix, and
satisfy the "nowhere vanishing" property for the divergence scalar,
ie.:
\begin{equation}\label{x23}
    \left|\widetilde{\gamma}^{AB}\frac{\partial
\widetilde{\gamma}_{AB}}{\partial y^1}\right|>0,\;y^1\neq 0.
\end{equation}
The existence of a large class of such free data
$(\widetilde{\gamma}_{AB})$ is analyzed in remark \ref{r1}.\\
 (\textbf{b})- a smooth function
$\theta\equiv\theta (y^i) $ on $\mathcal{C}$, and $\textbf{f}\equiv
\textbf{f}(y^i,\pi^j)$ on $\mathcal{C}\times \mathbb{R}^n$ s.t.
$\theta$ is negative, $\textbf{f}$ is non negative of compact
support contained in $\{\pi^1>c_1>0\}$ for a mass $\textbf{m}\neq
0$; and for a zero mass the support of $\textbf{f}$ is contained in
$\{\pi^1>c_1>0,\;\sum_{A=2}^n(\pi^A)^2>c_2>0\}$, besides that, $
Supp(\textbf{f})\cap (\{O\}\times \mathbb{R}^n)=\emptyset$. These
free data satisfy
\begin{equation}\label{x61}
    \theta=-1+O(r^2),\;\widetilde{\gamma}_{AB}=r^2\sum_i\frac{\partial
\theta^i}{\partial y^A}
    \frac{\partial \theta^i}{\partial y^B}+O(r^4),
\end{equation}
\begin{equation}\label{x24}
  \widetilde{\gamma}^{AB}=
    \sum_s\frac{\partial y^A}{\partial x^s}\frac{\partial y^B}{\partial x^s}+O(cste).
 \end{equation}
  \begin{theorem}\label{th2}
  Given the free data as described above by
(\textbf{a})-(\textbf{b}). Then, there exists a unique global
solution $(\theta,\;\Theta_{AB}
,\;\psi_{1\nu},\;\psi_{AB},\;\textbf{f})$ on $\mathcal{C}\times
\mathbb{R}^n$ of the initial data constraints (\ref{x4}),
 (\ref{y7}), (\ref{x5}) for the
Einstein-Vlasov system.
\end{theorem}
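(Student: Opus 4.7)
The plan is to solve the five families of constraint equations (\ref{x18}), (\ref{x19}), (\ref{x21}), (\ref{x20}), (\ref{x22}) hierarchically along the null generators $\partial_{y^1}$ of $\mathcal{C}$, in that specific order. The key structural observation is that, with the no--condition--on--$\kappa$ choice of free data made here, the first constraint is algebraic in $\psi_{11}$, while each of the remaining four is a first--order linear ODE in $y^1$ for a distinct block of unknowns, whose coefficients and sources involve only data already determined at earlier stages of the hierarchy.

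First I would set $\Theta_{AB}:=\widetilde{\gamma}_{AB}$; then the coefficient of $\psi_{11}$ in the Hamiltonian constraint (\ref{x18}) equals $\theta^{-1}\Theta^{AB}\partial_1\Theta_{AB}$, which is nowhere zero on $\{y^1\neq 0\}$ by assumption (\ref{x23}) together with $\theta<0$, so I can solve it algebraically for $\psi_{11}$. The matter integral on the right--hand side is finite because $\textbf{f}$ has compact support contained in $\{\pi^1>c_1>0\}$ (and in $\{\sum_{A=2}^n(\pi^A)^2>c_2>0\}$ in the zero--mass case), which is exactly what keeps the denominator appearing through (\ref{y24}) bounded away from zero and the whole integrand smooth.

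With $(\theta,\Theta_{AB},\psi_{11},\textbf{f})$ in hand, I would then extract $\psi_{1A}$ from (\ref{x19}): this is a linear first--order ODE in $y^1$ of the shape $\partial_1\psi_{1A}+a\,\psi_{1A}=b_A$ along each null generator, whose coefficient $a$ and source $b_A$ are smooth for $y^1>0$; classical linear ODE theory yields a unique global solution once an initial value at the vertex is fixed. I would apply the same procedure successively to (\ref{x21}) to obtain ODEs for the traceless part of $\psi_{AB}$ (the tracefreeness of that equation, noted in the remark after Theorem~\ref{th1}, leaves one scalar component undetermined at this stage), then to (\ref{x20}) to obtain the scalar ODE for $\chi=\Theta^{AB}\psi_{AB}$, and finally to (\ref{x22}) to obtain the linear ODE for $\psi_{01}$. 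At each stage the coefficients are linear combinations of quantities already determined, so the chain terminates and every $\psi_{\mu\nu}$ is produced.

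The main obstacle will be the regularity and initial--condition analysis at the vertex $O$. The coordinates $(y^\mu)$ are singular there, so each of the ODEs above has a potentially singular point at $y^1=0$, and I must show that the required Minkowskian behavior (\ref{x61})--(\ref{x24}), together with the orders recorded in (\ref{c10}) and the support assumption $Supp(\textbf{f})\cap(\{O\}\times\mathbb{R}^n)=\emptyset$, select a unique solution of each ODE that extends smoothly from $O$ with exactly those asymptotic orders. Once the initial values are fixed in this way, uniqueness follows from the algebraic solvability of (\ref{x18}) and the linearity of the subsequent ODEs, while global existence on all of $\mathcal{C}\setminus\{O\}$ is immediate from the smoothness of the coefficients for $y^1>0$.
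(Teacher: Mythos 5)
Your overall strategy is the paper's: set $\Theta_{AB}=\widetilde{\gamma}_{AB}$, solve the Hamiltonian constraint (\ref{x18}) algebraically for $\psi_{11}$ thanks to $\theta<0$ and the nowhere-vanishing divergence scalar (\ref{x23}), then propagate the remaining unknowns by linear transport equations along the generators. But your ordering of the hierarchy --- (\ref{x21}) before (\ref{x20}) --- breaks it. The equations (\ref{x21}) contain the trace $\chi=\Theta^{AB}\psi_{AB}$ explicitly as a coefficient (through the term proportional to $\Theta^{CE}\psi_{CE}\,\partial_1\Theta_{AB}$, which does not drop out of any traceless projection), and their right-hand side contains $\Theta^{CD}(\widetilde{R}_{CD}-\widetilde{T}_{CD})$, which becomes a known source only after $\Theta^{CD}\widetilde{R}_{CD}$ is replaced by $(\widetilde{T}_{11}-2\widetilde{T}_{01})/\theta$ --- a substitution licensed by $\widetilde{X}_{11}=0$ \emph{and} $\widetilde{X}_{01}=0$, i.e.\ by (\ref{x20}) already being solved. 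So at your third stage the coefficients are not ``quantities already determined'' and the chain does not close. The correct order is (\ref{x18}), (\ref{x19}), (\ref{x20}), (\ref{x21}), (\ref{x22}): first obtain $\chi$ from (\ref{x20}), then solve (\ref{x21}) for the $\psi_{AB}$ with $(A,B)\neq(2,2)$ while slaving $\psi_{22}=\frac{1}{\Theta^{22}}\bigl(\chi-\sum_{(A,B)\neq(2,2)}\Theta^{AB}\psi_{AB}\bigr)$, the omitted equation $Z_{22}=0$ then holding automatically by tracelessness.

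The second gap is that you name the vertex analysis as ``the main obstacle'' and then leave it undone, whereas it is where the existence argument actually lives. Each transport equation is singular at $y^1=0$, of the form $\frac{d\psi}{dy^1}+\frac{c}{y^1}\psi+\Psi(y^i,\psi)=0$ with $\Psi$ linear in the unknown; the paper converts each into a Volterra-type integral equation $\psi=\frac{1}{y^1}\int_0^{y^1}[-\lambda\Psi+(1-c)\psi]\,d\lambda$, whose continuous and Lipschitz (indeed linear) integrand yields a unique global solution together with, by inspection of the source near $O$, the orders $\psi_{11}=O(y^1)$, $\psi_{1A}=O((y^1)^2)$, $\chi=O(y^1)$, $\psi_{AB}=O((y^1)^3)$, $\psi_{01}=O(y^1)$. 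Asserting that the Minkowskian behavior (\ref{x61})--(\ref{x24}) ``selects'' a solution is not a substitute for this step: without the integral reformulation you obtain neither existence through the singular point nor the claimed uniqueness.
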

 \begin{proof} Given the free data
(\textbf{a})-(\textbf{b}), one solves the constraints described by
(\ref{x18}), (\ref{x20}), (\ref{x21}), (\ref{x22}) in a hierarchical
scheme. Indeed, one sets: \\
$\Theta_{AB}(y^1,y^A)=\widetilde{\gamma}_{AB}(y^1,y^A)$, then
$|\Theta^{AB}\partial_1\Theta_{AB}|>0$, and $\psi_{11}$ solves
algebraically the Hamiltonian constraint $\widetilde{X}_{11}=0$ as
described by (\ref{x18}), with $\psi_{11}=O(y^1)$. For the
constraints $\widetilde{X}_{1A}=0$ (\ref{x19}) and
$\widetilde{X}_{01}=0$ (\ref{x20}), they can be written in the forms
\begin{equation}\label{x25}
    \frac{d\psi_{1A}}{dy^1}+\frac{(n-1)}{y^1}\psi_{1A}+\psi_{A}(y^i,\psi_{1C})=0,
\end{equation}
\begin{equation}\label{x28}
    \frac{d\chi}{dy^1}+\frac{(n-1)}{y^1}\chi+\psi(y^i,\chi)=0,
\end{equation}where the functions $\psi_A$ and $\psi$ are linear
w.r.t. $\psi_{1C}$ and respectively $\chi$, their solutions satisfy
the integral systems
\begin{equation}\label{x59}
    \psi_{1A}=\frac{1}{y^1}\int_{0}^{y^1}[- \lambda
\psi_{A}+(2-n)\psi_{1A}](\lambda,y^A)d\lambda,
\end{equation}
\begin{equation}\label{x60}
\chi=\frac{1}{y^1}\int_{0}^{y^1}[- \lambda
\psi+(2-n)\chi](\lambda,y^A)d\lambda.
\end{equation}
Since the functions under the integral's
 sign are continuous w.r.t. $y^1$ and
 Lipschitzian w.r.t. the corresponding unknowns, the solutions of the systems (\ref{x25}),
  (\ref{x28}) exist,
  are unique and global thanks to linearity, furthermore
  $\psi_{1A}=O((y^1)^2),\;\chi=O(y^1)$ according to the behavior near O of $\psi_A$ and $\psi$.
   One first solves the system
  in $\psi_{1A}$ and, after that, the equation regarding $\chi$.
  Now, we consider the constraints
$Z_{AB}\equiv \widetilde{X}_{AB}-\frac{\widetilde{g}^{CD}
\widetilde{X}_{CD}}{n-1}\widetilde{g}_{AB}=0$ described in
(\ref{x21}) for $(A,B)\neq (2,2)$ since $(Z_{AB}),\;(A,B=2,...,n)$
is a traceless tensor, of unknowns $\psi_{AB}$ for $(A,B)\neq
    (2,2)$ provided $\psi_{22}$ takes the value $
\psi_{22}=\frac{1}{\Theta^{22}}(\chi-\displaystyle\sum_{(A,B)\neq
    (2,2)}\Theta^{AB}\psi_{AB})$, and where $\Theta^{AB}\widetilde{R}_{AB}\equiv
\widetilde{R}^{(n-1)}$ equals
$\frac{(\widetilde{T}_{11}-2\widetilde{T}_{01})}{\theta}$ since
$\widetilde{X}_{01}=0$ is satisfied. This system is of the form
\begin{equation}\label{x26}
    \frac{d\psi_{AB}}{dy^1}-\frac{2}{y^1}\psi_{AB}+
    \psi'_{AB}(y^i,\psi_{BC})=0,
\end{equation}its solution is unique, global, satisfies the integral system
\begin{equation*}
    \psi_{AB}=\frac{1}{y^1}\int_{0}^{y^1}[- \lambda
\psi'_{AB}+3\psi_{AB}](\lambda,y^A)d\lambda,
\end{equation*} and by closer inspection of the expression of
$\psi'_{AB}$, one proves that $\psi_{AB}=O((y^1)^3)$.
 That $Z_{22}=0$ is also satisfied with $
\psi_{22}=\frac{1}{\Theta^{22}}(\chi-\displaystyle\sum_{(A,B)\neq
    (2,2)}\Theta^{AB}\psi_{AB})$ follows
from the traceless property of $(Z_{AB})$. One also has
$\psi_{22}=O((y^1)^3)$. The last constraint (\ref{x22}) has the form
\begin{equation}\label{x27}
    \frac{d\psi_{01}}{dy^1}+\frac{n-1}{2y^1}\psi_{01}+
    \psi'_{01}(y^i,\psi_{01})=0,
\end{equation}
its solution is unique, global, agrees with the integral equation
\begin{equation*}
    \psi_{01}=\frac{1}{y^1}\int_{0}^{y^1}[- \lambda
\psi'_{01}+\frac{(3-n)}{2}\psi_{01}](\lambda,y^A)d\lambda.
\end{equation*}At least by studying the behavior near O of
 the expression of $\psi'$ one deduces that $\psi_{01}=O(y^1).$
\end{proof}
\section{On the evolution system $(H_{\overline{g}},H_\rho)$}
The treatment of the evolution system requires more analysis of the
free data, namely a complete description of their behavior near $O$,
and obviously the behavior near the vertex O of the constraints's
solutions. In order to apply Dossa's well posedness results
\cite{o}, it would be interesting in a subsequent work, to prove
that the characteristic initial data on concerned here can arise as
restrictions to the cone of functions smooth at the neighborhood of
the tip of the cone. We remark that for this purpose, the lapse is
related to the metric $\overline{g}$ by the relation
$\tau=\frac{|\frac{D(x)}{D(y)}|}{\sqrt{|\widetilde{\gamma}|}}\sqrt{|\overline{g}|}$.
Another more technical issue would be to derive the above results
under lower regularity assumptions. Moreover, the construction of a
large class of initial data sets offers also here the possibility to
study without any symmetry assumptions the global future evolution
of small data with appropriate fall-off behavior at infinity. The
strong nonlinear features of the Einstein equations requires one to
rely on a quite rigid analytic approach based on energy estimates
and other many tools, for this, one should take advantage of results
on global existence of D. Christodoulou and S. Klainerman \cite{r},
S. Klainerman and F. Nicolo \cite{y}, H. Linblad and I. Rodniansky
\cite{z},  Y. Choquet-Bruhat \cite{m}, L. Bieri and N. Zipser
\cite{p}, P. G. LeFloch and Y. Ma \cite{e}.

\begin{appendices}
\section {Some algebraic relations including the gravitational data
on $\mathcal{C}$}\label{A1} From the zero-shift condition and
algebraic properties between the metric and its inverse, one shows
that the gravitational data satisfy the following relations on
$\mathcal{C}$.
\begin{equation*}
    \partial_0\widetilde{g}_{0A}=0,\;
 \partial_0 \widetilde{g}^{01}=-(\widetilde{g}^{01})^2
    (\partial_0
    \widetilde{g}_{01}-\partial_0\widetilde{g}_{11})=
-\partial_0 \widetilde{g}^{11},
\end{equation*}
\begin{equation*}
 \partial_0
    \widetilde{g}^{00}=-(\widetilde{g}^{01})^2\partial_0\widetilde{g}_{11},
       \partial_0
\widetilde{g}^{0C}=-\widetilde{g}^{01}\widetilde{g}^{CD}\partial_0
\widetilde{g}_{1D},\;\partial_0\widetilde{g}^{AD}=-\widetilde{g}^{AB}\widetilde{g}^{CD}\partial_0
\widetilde{g}_{CB},
\end{equation*}
\begin{equation}\label{m31}
\partial_0\widetilde{g}_{00}=\partial_0\widetilde{g}_{01},
\frac{\partial}{\partial
y^\nu}(\widetilde{g}^{0i}+\widetilde{g}^{1i})=0,\;\nu=0,...,n,\;i=1,...,n.
\end{equation}

\section{Christoffel symbols of $\widetilde{g}$ on
$\mathcal{C}$}\label{A2} The trace of the metric on $\mathcal{C}$
induces the following expresions for the Christoffel symbols of the
metric on $\mathcal{C}$.
\begin{equation*}
    \widetilde{\Gamma}^0_{00} =
  \frac{1}{2}\widetilde{g}^{01}(2\partial_0
\widetilde{g}_{01}-\partial_1\widetilde{g}_{01}),\;
  \widetilde{\Gamma}^0_{01}  =
 \frac{1}{2}\widetilde{g}^{01}\partial_0
\widetilde{g}_{11},
\end{equation*}
\begin{equation*}
    \widetilde{\Gamma}^0_{0C} =
\frac{1}{2}\widetilde{g}^{01}(\partial_0
\widetilde{g}_{1C}+\partial_C \widetilde{g}_{01}),
  \widetilde{\Gamma}^0_{11} =0  ,
  \widetilde{\Gamma}^0_{1C} =0  ,
  \widetilde{\Gamma}^0_{CD} =-\frac{1}{2}\widetilde{g}^{01}\partial_1
\widetilde{g}_{CD},
\end{equation*}
\begin{equation*}
  \widetilde{\Gamma}^1_{00} =
  \frac{1}{2}\widetilde{g}^{01}(\partial_1
\widetilde{g}_{10}-\partial_0 \widetilde{g}_{01}),
\widetilde{\Gamma}^1_{01} =
\frac{1}{2}\widetilde{g}^{10}(\partial_1\widetilde{g}_{01}-
\partial_0\widetilde{g}_{11}),\;
  \widetilde{\Gamma}^1_{0C} =-
  \frac{1}{2}\widetilde{g}^{01}\partial_0
\widetilde{g}_{1C},
\end{equation*}
\begin{equation*}
    \widetilde{\Gamma}^1_{11} =
\frac{1}{2}\widetilde{g}^{10}(2\partial_1\widetilde{g}_{01}-\partial_0
\widetilde{g}_{11}),\; \widetilde{\Gamma}^1_{1C} =
\frac{1}{2}\widetilde{g}^{10}(\partial_C\widetilde{g}_{01}-\partial_0
\widetilde{g}_{1C}),
\end{equation*}
\begin{equation*}
    \widetilde{\Gamma}^1_{CD} =
\frac{1}{2}\widetilde{g}^{10}(\partial_1 \widetilde{g}_{CD}
-\partial_0 \widetilde{g}_{CD}),\;\widetilde{\Gamma}^C_{AB} =
  \frac{1}{2}\widetilde{g}^{CD}(\partial_A
\widetilde{g}_{DB}+
\partial_B
\widetilde{g}_{DA}-\partial_D \widetilde{g}_{AB}),
\end{equation*}
\begin{equation*}
    \widetilde{\Gamma}^C_{00} =-
  \frac{1}{2}\widetilde{g}^{CB}\partial_B \widetilde{g}_{01},\;
  \widetilde{\Gamma}^C_{01} = \frac{1}{2}\widetilde{g}^{CB}(\partial_0
\widetilde{g}_{B1}-\partial_B\widetilde{g}_{01}),
\end{equation*}
\begin{equation*}
  \widetilde{\Gamma}^C_{0D} =\frac{1}{2}\widetilde{g}^{CB}\partial_0
\widetilde{g}_{BD},\; \widetilde{\Gamma}^C_{1D} =
\frac{1}{2}\widetilde{g}^{CB}\partial_1 \widetilde{g}_{BD},\;
  \widetilde{\Gamma}^C_{11} =0.
\end{equation*}
\section{ Ricci tensor on $\mathcal{C}$}\label{A3} Using the
collected formula and the expressions of the Christoffel symbols of
the metric above, one establishes the expressions of the components
of the Ricci tensor on $\mathcal{C}$, following is the sketch.
 \subsection{Computation of $\widetilde{R}_{11}$}
 By definition
 \begin{eqnarray*}
  \widetilde{R}_{11} &=& \partial_1\widetilde{\Gamma}_{\gamma
  1}^\gamma-\partial_\gamma \widetilde{\Gamma}_{1
  1}^\gamma+\widetilde{\Gamma}_{1
  \gamma}^\delta\widetilde{\Gamma}_{\delta
  1}^\gamma-\widetilde{\Gamma}_{\gamma
  \delta}^\gamma\widetilde{\Gamma}_{1
  1}^\delta.
\end{eqnarray*}Since $\widetilde{\Gamma}^C_{11}=0$ according to the expressions of appendix
\ref{A2}, the term $\partial_1\widetilde{\Gamma}_{\gamma
  1}^\gamma-\partial_\gamma \widetilde{\Gamma}_{1
  1}^\gamma$ splits as:
\begin{eqnarray*}
  \partial_1\widetilde{\Gamma}_{\gamma
  1}^\gamma-\partial_\gamma \widetilde{\Gamma}_{1
  1}^\gamma &=& \partial_1\widetilde{\Gamma}_{01}^0-\partial_0 \widetilde{\Gamma}_{1
  1}^0+\partial_1\widetilde{\Gamma}_{1
  1}^1-\partial_1 \widetilde{\Gamma}_{1
  1}^1+\partial_1\widetilde{\Gamma}_{C
  1}^C-\partial_C \widetilde{\Gamma}_{1
  1}^C\\
   &=& \partial_1\widetilde{\Gamma}_{0
  1}^0-\partial_0 \widetilde{\Gamma}_{1
  1}^0+\partial_1\widetilde{\Gamma}_{C
  1}^C,
\end{eqnarray*} On the other hand, from the expression of
$\widetilde{\Gamma}^0_{01}$ it results that:
\begin{equation}\label{x43}
    \partial_1\widetilde{\Gamma}^0_{01}=\frac{1}{2}\partial_1
    [\widetilde{g}^{01}\partial_0 \widetilde{g}_{11}]=\frac{1}{2}(\partial_1
    \widetilde{g}^{01})(\partial_0 \widetilde{g}_{11})+\frac{1}{2}
    \widetilde{g}^{01}\partial_1 [\partial_0 \widetilde{g}_{11}].
\end{equation}The term $\partial_0 \widetilde{\Gamma}_{11}^0$
requires more attention due to the normal character of the
derivative $\partial_0$. One has:
\begin{eqnarray*}
  \partial_0 \widetilde{\Gamma}_{11}^0 &=&
  \partial_0[\frac{1}{2}\widetilde{g}^{0\gamma}
  (2\partial_1\widetilde{g}_{\gamma 1}-\partial_\gamma \widetilde{g}_{11})] \\
   &=&\frac{1}{2}
  \partial_0(\widetilde{g}^{0\gamma})
  (2\partial_1\widetilde{g}_{\gamma 1}-\partial_\gamma \widetilde{g}_{11})+
 \frac{1}{2}
  \widetilde{g}^{0\gamma}
  (2\partial_{01}^2\widetilde{g}_{\gamma 1}-\partial^2_{0\gamma} \widetilde{g}_{11}) \\
   &=&\frac{1}{2}
  \partial_0(\widetilde{g}^{00})
  (2\partial_1\widetilde{g}_{0 1}-\partial_0 \widetilde{g}_{11})+\frac{1}{2}
  \partial_0(\widetilde{g}^{01})
  (2\partial_1\widetilde{g}_{1 1}-\partial_1 \widetilde{g}_{11})+\\
  &&\frac{1}{2}
  \partial_0(\widetilde{g}^{0C})
  (2\partial_1\widetilde{g}_{C 1}-\partial_C \widetilde{g}_{11})+\frac{1}{2}
  \widetilde{g}^{01}
  (2\partial_{01}^2\widetilde{g}_{11}-\partial^2_{01}
\widetilde{g}_{11}),\hbox{}
\end{eqnarray*}and after more simplifications thanks to
properties of the metric and its inverse (\ref{c4}), (\ref{y4}), one
ends up with:
\begin{equation}\label{x41}
  \partial_0 \widetilde{\Gamma}_{11}^0 =\frac{1}{2}
  \partial_0(\widetilde{g}^{00})
  (2\partial_1\widetilde{g}_{0 1}-\partial_0 \widetilde{g}_{11})+\frac{1}{2}
  \widetilde{g}^{01}
  \partial_1(\partial_0\widetilde{g}_{11}).
\end{equation}The expression of $\widetilde{\Gamma}^C_{C1}$ (appendix
\ref{A2}) implies:
\begin{equation}\label{x42}
\partial_1\widetilde{\Gamma}^C_{C1}=\frac{1}{2}\partial_1(\widetilde{g}^{CB}\partial_1
\widetilde{g}_{CB}).
\end{equation}Now, one deals with the term
$\widetilde{E}_{11} =\widetilde{\Gamma}^\delta_{1\gamma}
  \widetilde{\Gamma}^\gamma_{\delta 1}-\widetilde{\Gamma}^\gamma_{\gamma\delta}
  \widetilde{\Gamma}^\delta_{1 1}$, since $\widetilde{\Gamma}^0_{11}=0=\widetilde{\Gamma}^C_{11}=
  \widetilde{\Gamma}^0_{1C}$
(appendix \ref{A2}), this terms simplifies in:
\begin{equation*}
    \widetilde{E}_{11}=
    \widetilde{\Gamma}^0_{10}
  \widetilde{\Gamma}^0_{0 1}+\widetilde{\Gamma}^C_{1D}
  \widetilde{\Gamma}^D_{C 1}-\widetilde{\Gamma}^0_{0 1}
  \widetilde{\Gamma}^1_{1
  1}-\widetilde{\Gamma}^C_{C 1}
  \widetilde{\Gamma}^1_{1
  1},
\end{equation*}and yields:
\begin{equation}\label{x40}
    \widetilde{E}_{11}=\frac{1}{4}(\widetilde{g}^{01}\partial_0
    \widetilde{g}_{11})^2+\frac{1}{4}(\widetilde{g}^{CB}\partial_1\widetilde{g}_{BD})
    (\widetilde{g}^{DE}\partial_1\widetilde{g}_{CE})-
    \frac{1}{2}(\widetilde{g}^{01}\partial_0\widetilde{g}_{11}+
    \widetilde{g}^{CB}\partial_1\widetilde{g}_{CB})\widetilde{\Gamma}^1_{11}.
\end{equation}
Combining the relations (\ref{x43})-(\ref{x40}) it results
\begin{equation*}
    \widetilde{R}_{11}= \frac{1}{2}(\partial_1
    \widetilde{g}^{01})\partial_0 \widetilde{g}_{11}-
    \frac{1}{2}(\partial_0 \widetilde{g}^{00})(2\partial_1
\widetilde{g}_{01}-\partial_0
\widetilde{g}_{11})+\frac{1}{2}\partial_1(\widetilde{g}^{CB}\partial_1
\widetilde{g}_{CB})
\end{equation*}
\begin{equation}
      +\frac{1}{4}(\widetilde{g}^{01}\partial_0
    \widetilde{g}_{11})^2+\frac{1}{4}(\widetilde{g}^{CB}\partial_1\widetilde{g}_{BD})
    (\widetilde{g}^{DE}\partial_1\widetilde{g}_{CE})-
    \frac{1}{2}(\widetilde{g}^{01}\partial_0\widetilde{g}_{11}+
    \widetilde{g}^{CB}\partial_1\widetilde{g}_{CB})\widetilde{\Gamma}^1_{11}.
\end{equation}Using the expression of
$\partial_0\widetilde{g}^{00}$ (appendix \ref{A1}), the one of
$\widetilde{\Gamma}^1_{11} $ (appendix \ref{A2} and that
$\partial_1(\widetilde{g}^{01}\widetilde{g}_{01})=0$ the expression
of $\widetilde{R}_{11}$ resumes in:
\begin{equation*}
   \widetilde{ R}_{11}=\frac{1}{4}\widetilde{g}^{01}\widetilde{g}^{CB}\partial_1
\widetilde{g}_{CB}\partial_0\widetilde{g}_{11}+
\frac{1}{2}\partial_1(\widetilde{g}^{CB}\partial_1
\widetilde{g}_{CB})+
\end{equation*}
\begin{equation}\label{y34}
 \frac{1}{4}(\widetilde{g}^{CB}\partial_1\widetilde{g}_{BD})
    (\widetilde{g}^{DE}\partial_1\widetilde{g}_{CE})-
       \frac{1}{2}\widetilde{g}^{01}\partial_1\widetilde{g}_{01} \widetilde{g}^{CB}\partial_1
\widetilde{g}_{CB}.
\end{equation}
\subsection{Computation of $\widetilde{R}_{1A}$} By definition
\begin{eqnarray*}
  \widetilde{R}_{1A} &=& \partial_A\widetilde{\Gamma}_{\gamma
  1}^\gamma-\partial_\gamma \widetilde{\Gamma}_{1
  A}^\gamma+\widetilde{\Gamma}_{A
  \gamma}^\delta\widetilde{\Gamma}_{\delta
  1}^\gamma-\widetilde{\Gamma}_{\gamma
  \delta}^\gamma\widetilde{\Gamma}_{1
  A}^\delta.
\end{eqnarray*}Furthermore:
\begin{eqnarray*}
  \partial_\gamma \widetilde{\Gamma}^{\gamma}_{1A} &=&
   \partial_0\widetilde{\Gamma }^0_{1A}+\partial_1\widetilde{\Gamma }^1_{1A}+
   \partial_C\widetilde{\Gamma }^C_{1A}.
\end{eqnarray*}Using the properties of the metric and its inverse
(\ref{c4}), (\ref{y4}), the term $ \partial_0\widetilde{\Gamma
}^0_{1A}$ simplifies in:
\begin{equation}\label{x44}
     \partial_0\widetilde{\Gamma }^0_{1A}  =\frac{1}{2}(\partial_0\widetilde{g}^{00})
  (\partial_A \widetilde{g}_{01}-\partial_0\widetilde{g}_{1A})+\frac{1}{2}
  (\partial_0\widetilde{g}^{0C})\partial_1 \widetilde{g}_{CA}+\frac{1}{2}
  \widetilde{g}^{01}
   \partial^2_{0A}\widetilde{g}_{1 1}.
\end{equation}The other terms resume in:
\begin{equation}\label{x45}
 \partial_A\widetilde{\Gamma}_{\gamma
  1}^\gamma
   =\frac{1}{2}\partial_A[\widetilde{g}^{01}
   \partial_0\widetilde{g}_{11}+2\widetilde{\Gamma}^1_{11}+
   \widetilde{g}^{CB}\partial_1 \widetilde{g}_{CB}],
\end{equation}
\begin{equation}\label{x46}
     \partial_1\widetilde{\Gamma}^1_{1A}  = \frac{1}{2}(\partial_1\widetilde{g}^{01})
(\partial_A\widetilde{g}_{01}-
  \partial_0\widetilde{g}_{1A})+\frac{1}{2}\widetilde{g}^{01}
  (\partial^2_{1A}\widetilde{g}_{01}-
  \partial^2_{10}\widetilde{g}_{1A}),
\end{equation}
\begin{equation}\label{x47}
  \partial_C\widetilde{\Gamma}^C_{1A} = \frac{1}{2}\partial_C
  [\widetilde{g}^{CB}\partial_1 \widetilde{g}_{AB}].
\end{equation}About the term $\widetilde{E}_{1A}\equiv
 \widetilde{\Gamma}^\delta_{A\gamma}
  \widetilde{\Gamma}^\gamma_{\delta 1}-\widetilde{\Gamma}^\gamma_{\gamma\delta}
  \widetilde{\Gamma}^\delta_{1 A}$, it simplifies in:
  \begin{equation*}
    \widetilde{E}_{1A}=
        \frac{1}{4}(\widetilde{g}^{01})^2(\partial_0
  \widetilde{g}_{11})(\partial_0 \widetilde{g}_{1A}+\partial_A \widetilde{g}_{01})+
    \frac{1}{2}\widetilde{g}^{01}(\partial_A\widetilde{g}_{01}-
  \partial_0 \widetilde{g}_{1A})\widetilde{\Gamma}^1_{11}+
  \end{equation*}
\begin{equation*}
\frac{1}{4}\widetilde{g}^{01}(\partial_C
\widetilde{g}_{01}-\partial_0 \widetilde{g}_{1C})
  (\widetilde{g}^{CB}\partial_1\widetilde{g}_{BA})
        -\frac{1}{4}\widetilde{g}^{01}(\partial_1\widetilde{g}_{CA})\widetilde{g}^{CB}
   (\partial_0 \widetilde{g}_{B1}-\partial_B \widetilde{g}_{01}) +
  \end{equation*}
\begin{equation*}
   \frac{1}{4}\widetilde{g}^{DE}\widetilde{g}^{CB}(\partial_1
   \widetilde{g}_{DB})
   (\partial_A \widetilde{g}_{EC}+\partial_C \widetilde{g}_{EA}-\partial_E\widetilde{g}_{AC})
    -\frac{1}{4}\widetilde{g}^{01}\widetilde{g}^{\gamma\delta}
   (\partial_1 \widetilde{g}_{\gamma\delta})(\partial_A \widetilde{g}_{01}-
   \partial_0 \widetilde{g}_{1A})
\end{equation*}
\begin{equation}\label{x47}
       -\frac{1}{4}\widetilde{g}^{\gamma\delta}
   (\partial_C \widetilde{g}_{\gamma\delta})(\widetilde{g}^{CB}
   \partial_1 \widetilde{g}_{BA}).
      \end{equation}Combining the expressions (\ref{x44})-(\ref{x47}) one
obtains:
\begin{equation*}
\widetilde{R}_{1A}=
\frac{1}{2}\widetilde{g}^{01}\partial_1(\partial_0
\widetilde{g}_{1A})+\frac{1}{2}(\frac{1}{2}\widetilde{g}^{CB}
\partial_1\widetilde{g}_{CB}-\widetilde{g}^{01}\partial_1\widetilde{g}_{01})\partial_0
\widetilde{g}_{1A}+
\end{equation*}
\begin{equation*}
    \frac{1}{2}\widetilde{g}^{01}(\widetilde{g}^{01}\partial_0\widetilde{g}_{11}-
   \widetilde{g}^{01}\partial_1\widetilde{g}_{01}- \frac{1}{2}\widetilde{g}^{CB}
\partial_1\widetilde{g}_{CB})\partial_A\widetilde{g}_{01}
- \frac{1}{2}\partial_C(\widetilde{g}^{CB}
\partial_1\widetilde{g}_{AB})+
\end{equation*}
\begin{equation*}
 \frac{1}{2}\partial_A(\widetilde{g}^{CB}
\partial_1\widetilde{g}_{CB})   - \frac{1}{2}\partial_A(\partial_0\widetilde{g}_{11})
    -          \frac{1}{4}(\widetilde{g}^{EF}\partial_C
          \widetilde{g}_{EF})(\widetilde{g}^{CB}\partial_1
          \widetilde{g}_{BA})+
\end{equation*}
\begin{equation}\label{y23}
      \frac{1}{4}\widetilde{g}^{DE}\widetilde{g}^{CB}(\partial_1 \widetilde{g}_{DB})
       (\partial_A\widetilde{g}_{EC}+\partial_C\widetilde{g}_{EA}-
  \partial_E\widetilde{g}_{AC}).
\end{equation}
\subsection{Computation of $\widetilde{R}_{AB}$} By definition:
\begin{eqnarray*}
  \widetilde{R}_{AB} &=& \partial_B\widetilde{\Gamma}_{\gamma
  A}^\gamma-\partial_\gamma
  \widetilde{\Gamma}_{AB}^\gamma+\widetilde{\Gamma}_{A
  \gamma}^\delta\widetilde{\Gamma}_{\delta
  B}^\gamma-\widetilde{\Gamma}_{\gamma
  \delta}^\gamma\widetilde{\Gamma}_{A
  B}^\delta.
\end{eqnarray*}The term $\partial_\gamma
\widetilde{\Gamma}^{\gamma}_{AB}$ splits as:
\begin{eqnarray*}
  \partial_\gamma \widetilde{\Gamma}^{\gamma}_{AB} &=&
   \partial_0\widetilde{\Gamma }^0_{AB}+\partial_1\widetilde{\Gamma }^1_{AB}+
   \partial_C\widetilde{\Gamma }^C_{AB}.
\end{eqnarray*}The properties of the trace of the metric and its inverse (\ref{c4}), (\ref{y4}),
induce that:
\begin{equation*}
\partial_0\widetilde{\Gamma }^0_{BA}=
    \frac{1}{2}(\partial_0\widetilde{g}^{00})
   (-\partial_0\widetilde{g}_{BA})+
  \frac{1}{2}(\partial_0\widetilde{g}^{0C})
   (\partial_B\widetilde{g}_{CA}+\partial_A\widetilde{g}_{CB }-
  \partial_C\widetilde{g}_{BA})-
\end{equation*}
\begin{equation}\label{x48}
 \frac{1}{2}(\partial_0\widetilde{g}^{01})
   (\partial_1\widetilde{g}_{BA})+    \frac{1}{2}\widetilde{g}^{01}
  (\partial^2_{0B}\widetilde{g}_{1 A}+
  \partial^2_{0A}\widetilde{g}_{1B }-
  \partial^2_{01}\widetilde{g}_{BA}),
    \end{equation}
\begin{equation}\label{x49}
  \partial_B\widetilde{\Gamma}^\gamma_{\gamma A} =
  \frac{1}{2}\partial_B
  [\widetilde{g}^{\gamma\delta}(\partial_\gamma
   \widetilde{g}_{\delta A}+\partial_A \widetilde{g}_{\gamma \delta}-
   \partial_\delta \widetilde{g}_{\gamma
   A})]=\frac{1}{2}\underset{(\gamma,\delta)\in
   \{(0,1);(1,0);(A,B)\}}{\underbrace{\partial_B[\widetilde{g}^{\gamma\delta}\partial_A
   \widetilde{g}_{\gamma\delta }}]},
\end{equation}
\begin{equation*}
\partial_1\widetilde{\Gamma}^1_{AB}  = -\frac{1}{2}(\partial_1\widetilde{g}^{01})(
  \partial_0\widetilde{g}_{AB})-\frac{1}{2}\widetilde{g}^{01}
  ( \partial^2_{10}\widetilde{g}_{AB})+
\end{equation*}
\begin{equation}\label{x50}
   \frac{1}{2}(\partial_1\widetilde{g}^{01})(
  \partial_1\widetilde{g}_{AB})+\frac{1}{2}\widetilde{g}^{01}
  ( \partial^2_{11}\widetilde{g}_{AB}),
\end{equation}
\begin{equation}\label{x51}
  \partial_C\widetilde{\Gamma}^C_{BA} =
  \frac{1}{2}\partial_C[\widetilde{g}^{CD}(\partial_B\widetilde{g}_{DA}+
  \partial_A\widetilde{g}_{DB}-\partial_D \widetilde{g}_{AB})];
\end{equation} Now, one is interested in
$\widetilde{E}_{AB} = \widetilde{\Gamma}^\delta_{A\gamma}
  \widetilde{\Gamma}^\gamma_{\delta B}-\widetilde{\Gamma}^\gamma_{\gamma\delta}
  \widetilde{\Gamma}^\delta_{AB}$. Straightforward computations
  using the properties of the trace of the metric and its inverse
  (\ref{c4}), (\ref{y4}), and some of the expressions of the Christoffel
  symbols of the metric on $\mathcal{C}$ (appendix \ref{A2}), lead
  to
  the following non-explicit expression:
\begin{equation*}
\widetilde{E}_{AB} =
        \frac{1}{4}(\widetilde{g}^{01})^2
    (\partial_0 \widetilde{g}_{1A}+\partial_A \widetilde{g}_{01})
    (\partial_0 \widetilde{g}_{1B}-\partial_B \widetilde{g}_{01})
    -\frac{1}{4}(\widetilde{g}^{CD}\partial_0\widetilde{g}_{DA})
    (\widetilde{g}^{01}\partial_1 \widetilde{g}_{CB})+
\end{equation*}
\begin{equation*}
\frac{1}{4}(\widetilde{g}^{01})^2
    (\partial_A \widetilde{g}_{01}-\partial_0\widetilde{g}_{1A})
    (\partial_B \widetilde{g}_{01}-\partial_0\widetilde{g}_{1B})
    -\frac{1}{4}\widetilde{g}^{01}\widetilde{g}^{CD}
   ( \partial_1\widetilde{g}_{DA})(\partial_0\widetilde{g}_{CB}-
   \partial_1\widetilde{g}_{CB})
\end{equation*}
\begin{equation*}
    -\frac{1}{4}\widetilde{g}^{01}\widetilde{g}^{CD}
   ( \partial_1\widetilde{g}_{AC})(\partial_0\widetilde{g}_{BD})
   -\frac{1}{4}\widetilde{g}^{01}\widetilde{g}^{CD}
   ( \partial_0\widetilde{g}_{AC}-\partial_1\widetilde{g}_{AC})
   (\partial_1\widetilde{g}_{BD})
\end{equation*}
\begin{equation*}
    +\widetilde{\Gamma}^D_{AC}\widetilde{\Gamma}^C_{DB}+
     \frac{1}{4}(\widetilde{g}^{01})\partial_1
     \widetilde{g}_{AB}\left[\widetilde{g}^{01}(2\partial_0
     \widetilde{g}_{01}-\partial_0\widetilde{g}_{11})+\widetilde{g}^{CE}
     \partial_0\widetilde{g}_ {CE}\right]+
\end{equation*}
\begin{equation*}
+\frac{1}{4}(\widetilde{g}^{01})\left[ \widetilde{g}^{01}\partial_0
     \widetilde{g}_{11}+2\widetilde{\Gamma}^1_{11}+
     \widetilde{g}^{CB}(\partial_1
     \widetilde{g}_{BC})\right](\partial_0 \widetilde{g}_{AB}-
     \partial_1\widetilde{g}_{AB})
\end{equation*}
\begin{equation}\label{x52}
         -[     \widetilde{g}^{01}(\partial_C\widetilde{g}_{01})+
     \widetilde{\Gamma}^D_{DC}]\widetilde{\Gamma}^C_{AB}.
\end{equation}Combining the relations (\ref{x48})-(\ref{x52}), it follows:
\begin{equation*}
     \widetilde{R}_{AB} =\widetilde{g}^{01}
  \partial^2_{01}\widetilde{g}_{BA}+\frac{1}{2}\widetilde{g}^{01}(-\widetilde{g}^{01}\partial_0
  \widetilde{g}_{11}+\frac{1}{2}\widetilde{g}^{CB}\partial_1\widetilde{g}_{CB})\partial_0\widetilde{g}_{AB}+
\end{equation*}
\begin{equation*}
    \frac{1}{2}\widetilde{g}^{01}\left[-\widetilde{g}^{CD}(\delta_A^E\partial_1\widetilde{g}_{DB}
    +\delta_B^E\partial_1\widetilde{g}_{DA})+\frac{1}{2}\widetilde{g}^{CE}\partial_1\widetilde{g}_{AB}\right]
    \partial_0\widetilde{g}_{CE}+
\end{equation*}
\begin{equation*}
   \frac{1}{4}(\widetilde{g}^{01})^2\partial_0\widetilde{g}_{11}\partial_1\widetilde{g}_{AB}+
    \frac{1}{2}\widetilde{g}^{01}\widetilde{g}^{CD}(\partial_0\widetilde{g}_{1D})
   (\partial_B\widetilde{g}_{CA}+\partial_A\widetilde{g}_{CB }
  -
  \partial_C\widetilde{g}_{BA}) -
\end{equation*}
\begin{equation*}
    \frac{1}{2}\widetilde{g}^{01}
  (\partial^2_{0B}\widetilde{g}_{1 A}+
  \partial^2_{0A}\widetilde{g}_{1B })-  \frac{1}{2}\partial_C[\widetilde{g}^{CD}(\partial_B\widetilde{g}_{DA}+
  \partial_A\widetilde{g}_{DB}-\partial_D \widetilde{g}_{AB})] +
\end{equation*}
\begin{equation*}
      \frac{1}{2}\underset{(\gamma,\delta)\in
   \{(0,1);(1,0);(A,B)\}}{\underbrace{\partial_B[\widetilde{g}^{\gamma\delta}\partial_A
   \widetilde{g}_{\gamma\delta }}]}-
   \frac{1}{2}(\widetilde{g}^{01})^2
    (\partial_0\widetilde{g}_{1A})
    (\partial_B \widetilde{g}_{01}-\partial_0\widetilde{g}_{1B})+
\end{equation*}
\begin{equation*}
\frac{1}{2}\widetilde{g}^{01}\widetilde{g}^{CD}
   ( \partial_1\widetilde{g}_{AC})(\partial_1\widetilde{g}_{BD})-
   \frac{1}{4}\widetilde{g}^{01}\widetilde{g}^{CE}
   ( \partial_1\widetilde{g}_{CE})(\partial_1\widetilde{g}_{AB})+
\end{equation*}
\begin{equation}\label{y32}
\widetilde{\Gamma}^D_{AC}\widetilde{\Gamma}^C_{DB}-[
\widetilde{g}^{01}(\partial_C\widetilde{g}_{01})+
     \widetilde{\Gamma}^D_{DC}]\widetilde{\Gamma}^C_{AB}.
\end{equation}
\subsection{Computation of $\widetilde{R}_{01}$} By definition:
\begin{eqnarray*}
  \widetilde{R}_{01} &=& \partial_1\widetilde{\Gamma}_{\gamma
  0}^\gamma-\partial_\gamma
  \widetilde{\Gamma}_{01}^\gamma+\widetilde{\Gamma}_{1
  \gamma}^\delta\widetilde{\Gamma}_{\delta
  0}^\gamma-\widetilde{\Gamma}_{\gamma
  \delta}^\gamma\widetilde{\Gamma}_{0
  1}^\delta.
\end{eqnarray*}Thanks to  the same properties as previously
mentioned, one has:
\begin{equation}\label{x53}
  \partial_1\widetilde{\Gamma}_{\gamma
  0}^\gamma  =\partial_1[\widetilde{g}^{01}
  \partial_0 \widetilde{g}_{01}]+\frac{1}{2}\partial_1[\widetilde{g}^{11}
  \partial_0 \widetilde{g}_{11}]+\frac{1}{2}\partial_1[\widetilde{g}^{AB}
  \partial_0 \widetilde{g}_{AB}],
\end{equation}
\begin{equation*}
\partial_\gamma \widetilde{\Gamma}^{\gamma}_{10}=
   \frac{1}{2}(\partial_0\widetilde{g}^{00})
  \partial_1\widetilde{g}_{0 0}+\frac{1}{2}(\partial_1\widetilde{g}^{10})
  \partial_1\widetilde{g}_{0 0}+\frac{1}{2}(\partial_1\widetilde{g}^{11})
  \partial_0\widetilde{g}_{11}   +
\end{equation*}
\begin{equation*}
\frac{1}{2}(\partial_0\widetilde{g}^{01})\partial_0\widetilde{g}_{11}+
\frac{1}{2}(\partial_0\widetilde{g}^{0C})(\partial_0\widetilde{g}_{1C}-\partial_C
\widetilde{g}_{10})+
\end{equation*}
\begin{equation*}
\frac{1}{2}(\partial_D\widetilde{g}^{DC})
  (  \partial_0\widetilde{g}_{1C}-\partial_C
  \widetilde{g}_{10})+
          \frac{1}{2}\widetilde{g}^{01}\partial^2_{0
  0}\widetilde{g}_{11}+\frac{1}{2}\widetilde{g}^{01}\partial^2_{11}
  \widetilde{g}_{00}+
\end{equation*}
\begin{equation}\label{x54}
      \frac{1}{2}\widetilde{g}^{11}\partial^2_{1
  0}\widetilde{g}_{11}+\frac{1}{2}\widetilde{g}^{CD}
  (  \partial^2_{C 0}\widetilde{g}_{1D}-\partial^2_{CD}
  \widetilde{g}_{10}).
\end{equation}Now, setting
 $\widetilde{E}_{01} \equiv \widetilde{\Gamma}^\delta_{1\gamma}
  \widetilde{\Gamma}^\gamma_{\delta 0}-\widetilde{\Gamma}^\gamma_{\gamma\delta}
  \widetilde{\Gamma}^\delta_{01}$, one obtains after straightforward computations:
\begin{equation*}
 \widetilde{E}_{01}=
         \frac{1}{4}\widetilde{g}^{01}\widetilde{g}^{CB}
    (\partial_0 \widetilde{g}_{B1}-2\partial_B \widetilde{g}_{01})
    (\partial_0 \widetilde{g}_{1C}+\partial_C \widetilde{g}_{01})
    +\frac{1}{2}\widetilde{\Gamma}_{11}^1
    (\partial_1\widetilde{g}_{01}-\partial_0
    \widetilde{g}_{11})-
\end{equation*}
\begin{equation*}
  \frac{1}{4}\widetilde{g}^{01}\widetilde{g}^{BC}\partial_B\widetilde{g}_{00}
  (\partial_0\widetilde{g}_{1C}+\partial_C\widetilde{g}_{01})  -\frac{1}{4}\widetilde{g}^{01}\widetilde{g}^{CB}
    \partial_0 \widetilde{g}_{C1}(\partial_0 \widetilde{g}_{B1}
    -\partial_B \widetilde{g}_{01})+
\end{equation*}
\begin{equation*}
    \frac{1}{4}\widetilde{g}^{DB}\widetilde{g}^{CE}
    (\partial_0 \widetilde{g}_{BC})
    (\partial_0 \widetilde{g}_{DE})
    -\frac{1}{4}\widetilde{g}^{01}\widetilde{g}^{CB}
   ( \partial_0\widetilde{g}_{CB})(\partial_0\widetilde{g}_{11}) -
\end{equation*}
\begin{equation*}
    \frac{1}{4}(\partial_1\widetilde{g}_{01}-\partial_0
    \widetilde{g}_{11})(\widetilde{g}^{01}\partial_0\widetilde{g}_{11}
    +\widetilde{g}^{CB}
    \partial_1\widetilde{g}_{CB}-2\widetilde{\Gamma}_{11}^1) -
\end{equation*}
\begin{equation}\label{x55}
   \frac{1}{2}\widetilde{g}^{CB}\{ \widetilde{g}^{01}
   ( \partial_C\widetilde{g}_{01})
        +\frac{1}{2}\widetilde{g}^{DE}
   ( \partial_C\widetilde{g}_{DE}+\partial_D\widetilde{g}_{CE}-
   \partial_E\widetilde{g}_{DC})\}(\partial_0\widetilde{g}_{B1}-
   \partial_B\widetilde{g}_{01}).
\end{equation}
Exploiting the relations (\ref{x53})-(\ref{x55}), it follows:
\begin{equation*}
    \widetilde{R}_{01}=
        -\frac{1}{2}\widetilde{g}^{01}\partial^2_{0
  0}\widetilde{g}_{11}+\widetilde{g}^{01}\partial_1(\partial_0\widetilde{g}_{01})-
  (\widetilde{g}^{01})^2(\partial_1\widetilde{g}_{01})(\partial_0\widetilde{g}_{01})-
\end{equation*}
\begin{equation*}
  \frac{7}{4}(\widetilde{g}^{01})^2(\partial_1\widetilde{g}_{01})(\partial_0\widetilde{g}_{11})-
  \frac{1}{2}\widetilde{g}^{01}\partial_1(\partial_0\widetilde{g}_{11})+
    \frac{1}{2}\partial_1(\widetilde{g}^{AB}\partial_0\widetilde{g}_{AB})
    +
\end{equation*}
\begin{equation*}
    (\widetilde{g}^{01}\partial_1\widetilde{g}_{01})^2-\frac{1}{2}
    (\widetilde{g}^{01})^2(\partial_0\widetilde{g}_{11})
    (\partial_0\widetilde{g}_{01})+\frac{1}{4}(\widetilde{g}^{01}\partial_0\widetilde{g}_{11})^2-
\end{equation*}
\begin{equation*}
\frac{1}{4}\widetilde{g}^{01}\widetilde{g}^{CB}
(\partial_1\widetilde{g}_{CB})(\partial_1\widetilde{g}_{01}-
\partial_0\widetilde{g}_{11})+\frac{1}{2}\widetilde{g}^{01}\widetilde{g}^{CB}\partial_0\widetilde{g}_{1C}
\partial_0\widetilde{g}_{1B}-
\end{equation*}
\begin{equation*}
    -
 \frac{1}{2}(\partial_D\widetilde{g}^{DC})
  (  \partial_0\widetilde{g}_{1C}-\partial_C
  \widetilde{g}_{10})-\frac{1}{2}\widetilde{g}^{01}\partial^2_{11}
  \widetilde{g}_{01}+\frac{1}{2}\widetilde{g}^{01}\partial^2_{1
  0}\widetilde{g}_{11}-
\end{equation*}
\begin{equation*}
  \frac{1}{2}\widetilde{g}^{CD}
  (  \partial^2_{C 0}\widetilde{g}_{1D}-\partial^2_{CD}
  \widetilde{g}_{10})-
  \widetilde{g}^{01}\widetilde{g}^{CB}\partial_C\widetilde{g}_{01}\partial_0\widetilde{g}_{1B}+
  \end{equation*}
  \begin{equation*}
    \frac{1}{4}\widetilde{g}^{DB}\widetilde{g}^{CE}
    (\partial_0 \widetilde{g}_{BC})
    (\partial_0 \widetilde{g}_{DE})
    -
     \frac{1}{4}\widetilde{g}^{01}\widetilde{g}^{CB}
   ( \partial_0\widetilde{g}_{CB})(\partial_0\widetilde{g}_{11})  -
  \end{equation*}
\begin{equation}\label{y33}
 \frac{1}{2}\widetilde{g}^{CB}(\partial_0\widetilde{g}_{B1}-
   \partial_B\widetilde{g}_{01})\widetilde{\Gamma}^D_{DC}.
\end{equation}
\subsection{Computation of $\widetilde{R}_{00}$} By definition:
\begin{eqnarray*}
  \widetilde{R}_{00} &=& \partial_0\widetilde{\Gamma}_{\gamma
  0}^\gamma-\partial_\gamma \widetilde{\Gamma}_{0
  0}^\gamma+\widetilde{\Gamma}_{0
  \gamma}^\delta\widetilde{\Gamma}_{\delta
  0}^\gamma-\widetilde{\Gamma}_{\gamma
  \delta}^\gamma\widetilde{\Gamma}_{00}^\delta.
\end{eqnarray*}After simplifications, one obtains:
\begin{equation}\label{x56}
\partial_0 \widetilde{\Gamma}^\gamma_{\gamma 0}  =
\frac{1}{2}(\partial_0 \widetilde{g}^{\gamma\delta})\partial_0
        \widetilde{g}_{\gamma\delta}+
\widetilde{g}^{01}\partial_{00}^2\widetilde{g}_{10}+ \frac{1}{2}
   \widetilde{g}^{11}
   \partial^2_{00} \widetilde{g}_{11}
 +\frac{1}{2}\widetilde{g}^{AB}\partial_{00}^2\widetilde{g}_{AB}.
\end{equation}On the other hand one has:
\begin{equation*}
    \partial_\gamma \widetilde{\Gamma}^\gamma_{00}=
    \frac{1}{2}(\partial_\gamma\widetilde{g}^{\gamma\delta})
   (2\partial_0 \widetilde{g}_{0\delta}-\partial_\delta
   \widetilde{g}_{00})+\widetilde{g}^{01}
   \partial^2_{00} \widetilde{g}_{01}+
\end{equation*}
\begin{equation}\label{x57}
\widetilde{g}^{11}
   \partial^2_{01} \widetilde{g}_{01}-\frac{1}{2}\widetilde{g}^{11}
   \partial^2_{11} \widetilde{g}_{00}- \frac{1}{2}\widetilde{g}^{AB}
  \partial_{AB}^2\widetilde{g}_{00}.
  \end{equation}Concerning $\widetilde{E}_{00} \equiv
\widetilde{\Gamma}^\delta_{0\gamma}
  \widetilde{\Gamma}^\gamma_{\delta 0}-\widetilde{\Gamma}^\gamma_{\gamma\delta}
  \widetilde{\Gamma}^\delta_{00}$, it resumes in:
\begin{equation*}
\widetilde{E}_{00} =
     \frac{1}{4}(\widetilde{g}^{01})^2\partial_0\widetilde{g}_{11}
    (\partial_1 \widetilde{g}_{01}-\partial_0\widetilde{g}_{01})+
\frac{1}{4} (\widetilde{g}^{01})^2
    (\partial_1 \widetilde{g}_{01}-\partial_0\widetilde{g}_{11})^2-
\end{equation*}
\begin{equation*}
    \frac{1}{4}\widetilde{g}^{01}\widetilde{g}^{BC}\partial_B\widetilde{g}_{00}
  (\partial_0\widetilde{g}_{1C}+\partial_C\widetilde{g}_{01})
  -\frac{1}{2}\widetilde{g}^{01}\widetilde{g}^{CB}
(\partial_0\widetilde{g}_{1B}-\partial_B\widetilde{g}_{01})
\partial_0\widetilde{g}_{1C}
       \end{equation*}
\begin{equation*}
    -
    \frac{1}{4}
     \widetilde{g}^{01}(2\partial_0
     \widetilde{g}_{10}-\partial_1\widetilde{g}_{00})
\left[\widetilde{g}^{01}(\partial_1
     \widetilde{g}_{10}-\partial_0\widetilde{g}_{11})+
\widetilde{g}^{CB}\partial_0\widetilde{g}_{BC}\right]+
\end{equation*}
\begin{equation*}
\frac{1}{4}\widetilde{g}^{DB}\widetilde{g}^{CE}\partial_0\widetilde{g}_{BC}
    \partial_0 \widetilde{g}_{ED} -\frac{1}{4}\widetilde{g}^{01}
     (\partial_1\widetilde{g}_{01}-\partial_0 \widetilde{g}_{01})
      (2\widetilde{\Gamma}_{11}^1+\widetilde{g}^{CB}\partial_1\widetilde{g}_{BC}
     )+
\end{equation*}
\begin{equation}\label{x58}
       \frac{1}{4}\widetilde{g}^{01}\widetilde{g}^{CB}\partial_B
     \widetilde{g}_{00}(\partial_C\widetilde{g}_{10}-\partial_0\widetilde{g}_{1C})
-\frac{1}{2}\widetilde{g}^{DB}
\partial_B
\widetilde{g}_{00}\widetilde{\Gamma}^C_{CD}.
\end{equation}Combining the expressions above (\ref{x56})-(\ref{x58}) leads to:
  \begin{equation*}
     \widetilde{R}_{00}= -\frac{1}{2}
   \widetilde{g}^{01}
   \partial^2_{00} \widetilde{g}_{11}
 +\frac{1}{2}\widetilde{g}^{AB}\partial_{00}^2\widetilde{g}_{AB}+
 \widetilde{g}^{01}
   \partial^2_{01} \widetilde{g}_{01}-
   \frac{1}{2}\widetilde{g}^{AB}(
  \partial^2_{0A}
\widetilde{g}_{0B}-\partial_{AB}^2\widetilde{g}_{01})-
\end{equation*}
\begin{equation*}
   \frac{1}{2}\widetilde{g}^{01}
   \partial^2_{11} \widetilde{g}_{01} +\frac{1}{2}\widetilde{g}^{01}\left(\widetilde{g}^{01}\partial_0\widetilde{g}_{11}
    -2\widetilde{g}^{01}\partial_1\widetilde{g}_{01}+
    \widetilde{g}^{CB}(\frac{1}{2}\partial_1\widetilde{g}_{CB}-\partial_0\widetilde{g}_{CB})\right)
    \partial_0\widetilde{g}_{01}
\end{equation*}
\begin{equation*}
   - \frac{1}{4}(\widetilde{g}^{01}\partial_0\widetilde{g}_{11})^2+
    \frac{1}{2}\widetilde{g}^{01}\widetilde{g}^{AC}\partial_0\widetilde{g}_{1A}\partial_0\widetilde{g}_{1C}
-\frac{1}{4}\widetilde{g}^{DA}\widetilde{g}^{CB}\partial_0\widetilde{g}_{BA}\partial_0\widetilde{g}_{CD}+
\end{equation*}
\begin{equation*}
\frac{1}{4}(\widetilde{g}^{01})^2(\partial_1\widetilde{g}_{10})\partial_0\widetilde{g}_{11}+
\frac{1}{2}(\widetilde{g}^{01}\partial_1\widetilde{g}_{01})^2
+\frac{1}{2}(-\widetilde{g}^{01}\widetilde{g}^{CD}\partial_0\widetilde{g}_{1D}+
    \partial_A\widetilde{g}^{AC})\partial_C\widetilde{g}_{01}
\end{equation*}
\begin{equation}\label{y36}
-\frac{1}{4}\widetilde{g}^{CB}(\partial_1\widetilde{g}_{CB}-\partial_0\widetilde{g}_{CB})
\partial_1\widetilde{g}_{01}+\frac{1}{2}\widetilde{g}^{DB}(\partial_B\widetilde{g}_{01})\widetilde{\Gamma}^C_{CD}.
\end{equation}
We do not give the expression of $\widetilde{R}_{0A}$ here since it
is not used directly in the construction of the constraints
equations. \section{Computation of $\frac{\partial}{\partial
   y^0}(\widetilde{\Gamma}^0+
    \widetilde{\Gamma}^1)$ on $\mathcal{C}$}\label{A4}
 By definition one has:
\begin{eqnarray*}
  \frac{\partial}{\partial
   y^0}(\widetilde{\Gamma}^0+
    \widetilde{\Gamma}^1) &=& \frac{\partial}{\partial
   y^0}[\widetilde{g}^{\mu\nu}(\widetilde{\Gamma}^0_{\mu\nu}+
   \widetilde{\Gamma}^1_{\mu\nu})] \\
   &=& (\frac{\partial}{\partial
   y^0}\widetilde{g}^{\mu\nu})(\widetilde{\Gamma}^0_{\mu\nu}+
   \widetilde{\Gamma}^1_{\mu\nu})+\widetilde{g}^{\mu\nu}\frac{\partial}{\partial
   y^0}(\widetilde{\Gamma}^0_{\mu\nu}+
   \widetilde{\Gamma}^1_{\mu\nu}),
   \end{eqnarray*}according to the properties
 of the trace of the metric and its inverse on the cone (\ref{c4}), (\ref{y4}), one
 has on $\mathcal{C}$
 \begin{equation*}
    \frac{\partial}{\partial
   y^0}(\widetilde{\Gamma}^0+
    \widetilde{\Gamma}^1)=2\widetilde{g}^{01}\frac{\partial}{\partial y^0}(\widetilde{\Gamma}^0_{01}+
   \widetilde{\Gamma}^1_{01})+\widetilde{g}^{11}\frac{\partial}{\partial y^0}
   (\widetilde{\Gamma}^0_{11}+
   \widetilde{\Gamma}^1_{11})+
 \end{equation*}
\begin{equation*}
 \widetilde{g}^{AB}\frac{\partial}{\partial y^0}
   (\widetilde{\Gamma}^0_{AB}+
   \widetilde{\Gamma}^1_{AB})+ (\frac{\partial \widetilde{g}^{\mu\nu}}{\partial
   y^0})(\widetilde{\Gamma}^0_{\mu\nu}+
   \widetilde{\Gamma}^1_{\mu\nu}).
\end{equation*}Furthermore,
\begin{equation*}
    \frac{\partial}{\partial y^0}(\widetilde{\Gamma}^0_{01}+
   \widetilde{\Gamma}^1_{01}) =
\end{equation*}
\begin{eqnarray*}
  && \frac{1}{2} \frac{\partial}{\partial y^0}
   [\widetilde{g}^{0\mu}(\partial_0 \widetilde{g}_{1\mu}+\partial_1\widetilde{g}_{0\mu}
   -\partial_\mu \widetilde{g}_{01})+
   \widetilde{g}^{1\mu}(\partial_0 \widetilde{g}_{1\mu}+\partial_1\widetilde{g}_{0\mu}
   -\partial_\mu \widetilde{g}_{01})]\\
   &=&\frac{1}{2}(\frac{\partial (\widetilde{g}^{0\mu}+\widetilde{g}^{1\mu})}{\partial
   y^0})(\partial_0 \widetilde{g}_{1\mu}+\partial_1\widetilde{g}_{0\mu}
   -\partial_\mu \widetilde{g}_{01})+\\
   &&\frac{1}{2}(\widetilde{g}^{0\mu}+\widetilde{g}^{1\mu})
   (\partial^2_{00} \widetilde{g}_{1\mu}+\partial^2_{01}\widetilde{g}_{0\mu}
   -\partial^2_{0\mu} \widetilde{g}_{01}),
\end{eqnarray*}using the expressions of the trace of the metric and its inverse
(\ref{c4}), (\ref{y4}), the relations (\ref{m31}) of appendix
\ref{A1}, this expression simplifies:
\begin{equation}
  \frac{\partial}{\partial y^0}(\widetilde{\Gamma}^0_{01}+
   \widetilde{\Gamma}^1_{01})= -\frac{1}{2}(\widetilde{g}^{01})^2
   \partial_0\widetilde{g}_{01}\partial_1\widetilde{g}_{01}+
   \frac{1}{2}\widetilde{g}^{01}\partial^2_{01}\widetilde{g}_{00}.
\end{equation}The term
 $ \frac{\partial}{\partial y^0}(\widetilde{\Gamma}^0_{11}+
   \widetilde{\Gamma}^1_{11})$ in turn reads:
   \begin{eqnarray*}
  \frac{\partial}{\partial y^0}(\widetilde{\Gamma}^0_{11}+
   \widetilde{\Gamma}^1_{11}) &=& \frac{1}{2}(\frac{\partial (\widetilde{g}^{0\mu}+\widetilde{g}^{1\mu})}{\partial
   y^0})(2\partial_1 \widetilde{g}_{1\mu}
   -\partial_\mu \widetilde{g}_{11})+\\
   &&\frac{1}{2}(\widetilde{g}^{0\mu}+\widetilde{g}^{1\mu})
   (2\partial^2_{01} \widetilde{g}_{1\mu}
   -\partial^2_{0\mu} \widetilde{g}_{11}).
\end{eqnarray*}After simplifications thanks to the same arguments as above,
 this expression results in:
 \begin{equation}
\frac{\partial}{\partial y^0}(\widetilde{\Gamma}^0_{11}+
   \widetilde{\Gamma}^1_{11})=-\frac{1}{2}(\widetilde{g}^{01})^2
   \partial_0\widetilde{g}_{01}(2\partial_1\widetilde{g}_{01}-\partial_0\widetilde{g}_{11})+
   \frac{1}{2}\widetilde{g}^{01}
   (2\partial^2_{01}\widetilde{g}_{01}-\partial^2_{00}\widetilde{g}_{11}).
 \end{equation}
Concerning the terms $ \frac{\partial}{\partial
y^0}(\widetilde{\Gamma}^0_{AB}+
   \widetilde{\Gamma}^1_{AB})$, computations and various
   simplifications result in:
   \begin{equation}
\frac{\partial}{\partial y^0}(\widetilde{\Gamma}^0_{AB}+
   \widetilde{\Gamma}^1_{AB})=\frac{1}{2}(\widetilde{g}^{01})^2\partial_0\widetilde{g}_{01}
   (\partial_0\widetilde{g}_{AB})+
   \frac{1}{2}\widetilde{g}^{10}(\partial^2_{0A}\widetilde{g}_{0B}+
   \partial^2_{0B}\widetilde{g}_{0A}-\partial^2_{00}\widetilde{g}_{AB}).
   \end{equation}Due to the fact that on the cone one has:
    $\partial_0\widetilde{g}_{0A}=0,\;
    \partial_0\widetilde{g}_{00}=\partial_0\widetilde{g}_{01}$, it
    follows also that
     $\partial^2_{0A}\widetilde{g}_{0B}=0=
   \partial^2_{0B}\widetilde{g}_{0A},\;\partial^2_{01}\widetilde{g}_{00}-
   \partial^2_{01}\widetilde{g}_{01}=0$, and therefore at this step one has:
\begin{equation}\label{x39}
   \frac{\partial}{\partial
   y^0}(\widetilde{\Gamma}^0+
    \widetilde{\Gamma}^1)=\left\{
       \begin{array}{ll}
         (\widetilde{g}^{01})^2\partial^2_{00}\widetilde{g}_{11}
-\frac{1}{2}\widetilde{g}^{01}\widetilde{g}^{AB}(\partial^2_{00}\widetilde{g}_{AB})
    -\frac{1}{2}(\widetilde{g}^{01})^3\partial_0\widetilde{g}_{01}
   \partial_0\widetilde{g}_{11}+& \hbox{} \\
   \frac{1}{2}
   (\widetilde{g}^{01})^2\widetilde{g}^{AB}
   \partial_0\widetilde{g}_{01}\partial_0\widetilde{g}_{AB}
  +(\frac{\partial \widetilde{g}^{\mu\nu}}{\partial
   y^0})(\widetilde{\Gamma}^0_{\mu\nu}+
   \widetilde{\Gamma}^1_{\mu\nu}). & \hbox{}
       \end{array}
     \right.
\end{equation}Now, we are interested of the term
 $X_1$:
\begin{equation*}
    X_1=(\frac{\partial \widetilde{g}^{\mu\nu}}{\partial
   y^0})(\widetilde{\Gamma}^0_{\mu\nu}+
   \widetilde{\Gamma}^1_{\mu\nu}).
\end{equation*}  Although this term does not contain second order outgoing derivatives,
it is important to highlight in it the presence of the term
$\partial_0 \widetilde{g}_{01}$. In virtue of the expressions of the
Christoffel symbols of the metric on $\mathcal{C}$ of appendix
\ref{A2}, the following computations hold on $\mathcal{C}$:
\begin{equation*}
\widetilde{\Gamma}^0_{00}+\widetilde{\Gamma}^1_{00}=
\frac{1}{2}\widetilde{g}^{10}\partial_0\widetilde{g}_{00},\;
\widetilde{\Gamma}^0_{01}+\widetilde{\Gamma}^1_{01}=
\frac{1}{2}\widetilde{g}^{10}\partial_1\widetilde{g}_{00},
\end{equation*}
\begin{equation*}
\widetilde{\Gamma}^0_{11}+\widetilde{\Gamma}^1_{11}=
\frac{1}{2}\widetilde{g}^{10}(2\partial_1\widetilde{g}_{01}-\partial_0\widetilde{g}_{11}),\;
\widetilde{\Gamma}^0_{0A}+\widetilde{\Gamma}^1_{0A}=
\frac{1}{2}\widetilde{g}^{10}\partial_A\widetilde{g}_{00},
\end{equation*}
\begin{equation*}
\widetilde{\Gamma}^0_{1A}+\widetilde{\Gamma}^1_{1A}=
\frac{1}{2}\widetilde{g}^{10}(\partial_A\widetilde{g}_{10}-\partial_0\widetilde{g}_{1A}
),\; \widetilde{\Gamma}^0_{AB}+\widetilde{\Gamma}^1_{AB}=
-\frac{1}{2}\widetilde{g}^{10}\partial_0\widetilde{g}_{AB}.
\end{equation*}Exploiting these expressions, the properties of the
trace of the metric and its inverse (\ref{c4}), (\ref{y4}), some
relations of (\ref{m31}) (appendix \ref{A1}), $X_1$ expresses as:
\begin{equation*}
    X_1=\left\{
       \begin{array}{ll}
      - \frac{1}{2}(\widetilde{g}^{01})^3 \partial_0\widetilde{g}_{11}
      \partial_0\widetilde{g}_{00}
+\frac{1}{2}(\widetilde{g}^{01})^3( \partial_0\widetilde{g}_{01}-
\partial_0\widetilde{g}_{11})
(2\partial_1\widetilde{g}_{01}-
\partial_0 \widetilde{g}_{11})& \hbox{} \\
    -( \widetilde{g}^{01})^3
(\partial_0
\widetilde{g}_{10}-\partial_0\widetilde{g}_{11})\partial_1
\widetilde{g}_{00}-(\widetilde{g}^{01})^2\widetilde{g}^{AC}\partial_0\widetilde{g}_{1C}
\partial_A\widetilde{g}_{00}+& \hbox{} \\
(\widetilde{g}^{01})^2\widetilde{g}^{AC}\partial_0\widetilde{g}_{1C}
(\partial_A\widetilde{g}_{01}-\partial_0\widetilde{g}_{1A})+
\frac{1}{2}\widetilde{g}^{01}\widetilde{g}^{AC}\widetilde{g}^{BD}
\partial_0\widetilde{g}_{AB}
\partial_0\widetilde{g}_{CD}, & \hbox{}
       \end{array}
     \right.
\end{equation*}and finally simplifies to:
 \begin{equation}\label{x38}
    X_1=\left\{
       \begin{array}{ll}
      - (\widetilde{g}^{01})^3 \partial_0\widetilde{g}_{01}\partial_0\widetilde{g}_{11}
+\frac{1}{2}(\widetilde{g}^{01})^3
(\partial_0\widetilde{g}_{11})^2& \hbox{} \\
  -(\widetilde{g}^{01})^2\widetilde{g}^{AC}\partial_0\widetilde{g}_{1C}
\partial_0\widetilde{g}_{1A}+
\frac{1}{2}\widetilde{g}^{01}\widetilde{g}^{AC}\widetilde{g}^{BD}
\partial_0\widetilde{g}_{AB}
\partial_0\widetilde{g}_{CD}.
 & \hbox{}
       \end{array}
     \right.
\end{equation}Combining the expressions (\ref{x39}), (\ref{x38}),
one ends up by:
    \begin{equation*}
         \frac{\partial}{\partial
   y^0}(\widetilde{\Gamma}^0+
    \widetilde{\Gamma}^1)=\frac{1}{2}(\widetilde{g}^{01})^2\partial^2_{00}\widetilde{g}_{11}
-\frac{1}{2}\widetilde{g}^{01}\widetilde{g}^{AB}(\partial^2_{00}\widetilde{g}_{AB})
    -\frac{3}{2}(\widetilde{g}^{01})^3\partial_0\widetilde{g}_{01}
   \partial_0\widetilde{g}_{11}+
    \end{equation*}
    \begin{equation*}
        \frac{1}{2}(\widetilde{g}^{01})^3
(\partial_0\widetilde{g}_{11})^2+
\frac{1}{2}(\widetilde{g}^{01})^2\partial_0\widetilde{g}_{01}\widetilde{g}^{AB}\partial_0
\widetilde{g}_{AB}-
    \end{equation*}
\begin{equation}\label{y35}
   (\widetilde{g}^{01})^2\widetilde{g}^{AC}\partial_0\widetilde{g}_{1C}
\partial_0\widetilde{g}_{1A}+
\frac{1}{2}\widetilde{g}^{01}\widetilde{g}^{AC}\widetilde{g}^{BD}
\partial_0\widetilde{g}_{AB}
\partial_0\widetilde{g}_{CD}.
\end{equation}
\end{appendices}{}

\textbf{Patenou Jean Baptiste}\\
 Department of Mathematics and Computer Science,\\
Faculty of Science, University of Dschang, Cameroon, P. O. Box. 67,
Dschang. E-mail: jeanbaptiste.patenou@univ-dschang.org,
jpatenou@yahoo.fr
\end{document}